\pgfplotsset{compat=1.14}
\newcommand{\minimize}{\text{minimize}}
\newcommand{\sub}{\text{subject to}}
\DeclareMathOperator{\Mod}{Mod}
\DeclareMathOperator{\Adm}{Adm}
\DeclareMathOperator{\gen}{gen}
\newtheorem{theorem}{Theorem}[section]
\newtheorem{definition}{Definition}[section]
\newtheorem{lemma}{Lemma}[section]
\newtheorem{corollary}{Corollary}[section]
\title{$p$-Modulus on radially symmetric trees}
  \author{Prem Raj Prasain \\
    \small Department of Physical and Computational Science \\ \small Bethany College, Bethany, WV }
\begin{document}

\maketitle

\begin{abstract}

In this paper, we establish the theory of $p$-modulus of a family of infinite paths on an infinite-rooted tree and then explore its interpretation and properties. One key result is the formulation of $p$-modulus on the infinite tree as a limit of $p$-modulus on truncated trees, with a formula given in terms of a series. Analogous to the existing theory for finite graphs, the $1$-modulus of a family of descending paths in an infinite tree is related to the minimum cut problem, the $2$-modulus is related to effective resistance, and the $\infty$-modulus is related to the length of shortest paths. Another key result is the existence of a critical $p$-value for radially symmetric infinite binary trees, which assigns a kind of dimension to the boundaries of these trees.
\end{abstract}
\textbf{Keywords:} graph, modulus, radially symmetric infinite tree, descending paths, truncated tree. 

\section{Introduction}\label{Introduction}
A \emph{graph} is a tuple $G=(V,E)$ consisting of the set of \emph{vertices} $V$ and the set of \emph{edges} $E$.  The number of vertices and edges in the given graph $G=(V,E)$ are represented by $|E|$ and $|V|$, respectively. A graph $G$ is \emph{finite} if both $|E|$ and $|V|$ are finite, otherwise $G$ is \emph{infinite}. A \emph{sequence} $\gamma$ in a graph $G$ is the chain of vertices and edges. That is, $\gamma=v_0e_1v_1e_2v_2\cdots $ $v_i \in V$, $i= 1, 2, 3, \cdots$ and $e_i=\{v_{i-1},v_i\}\in E$, $i=1, 2, 3, \cdots$. A \emph{finite walk} in a graph $G$ is a finite sequence $\gamma=v_0e_1v_1e_2v_2\cdots e_nv_n$. In this case, the walk is said to have \emph{hop length} $n$, and it is denoted by $\ell(\gamma)$, that is, $\ell(\gamma)=n$. In a similar fashion, an \emph{infinite walk} is an infinite sequence $\gamma=v_0e_1v_1e_2v_2 \cdots$ where $e_i \neq e_j$ and in this case, we say that $\gamma$ has length $\ell(\gamma)=+\infty$. A \emph{finite (resp.\  infinite) path} is a finite (resp.\ infinite) walk in which $v_i\ne v_j$ for all $i\ne j$.  A graph is called \emph{simple} if it contains at most one edge between any two vertices and contains no self-loops (i.e., edges of the form $\{v,v\}$). In a simple graph, a walk can be uniquely determined from either its vertex sequence $v_0v_1v_2\cdots$ or its edge sequence $e_1e_2e_3\cdots$.  A \emph{cycle} is a finite walk $v_0e_1v_1e_2\cdots v_{n-1}e_nv_0$ with the property that $v_0e_1v_1e_2\cdots v_{n-1}$ is a path. 

Let $\Gamma(u,v)$ be the set of all connecting paths between given two vertices $u$ and $v$ in the graph $G$. A graph $G$ is \emph{connected} we mean the set $\Gamma(u,v)\ne\emptyset$ for all distinct pairs of vertices $u$ and $v$; that is, the graph $G$ has at least one vertex, and there exists a path between every pair of vertices. A \emph{weighted graph} is a tuple $G=(V,E, \sigma)$ where the function $\sigma: E \to \mathbb{R}_{> 0}$ which assigns a positive weight to each edge of $G$. The case $\sigma\equiv 1$ can be thought of as an unweighted graph and denoted by $G=(V,E)$.  So, without loss of generality, we shall consider all graphs to be weighted. \\
A graph $G$ is a \emph{tree} if it is connected and contains no cycles. A \emph{rooted tree} is a tuple $G=(G,E,\sigma,o)$ where $G=(V,E,\sigma)$ is a tree and $o\in V$ is a specially identified vertex called the \emph{root}. Note that for given any two vertices $u$ and $v$ in a tree, $\Gamma(u,v)$ contains exactly one simple path $\gamma_{uv}$. Every vertex, $v$, in a rooted tree has a \emph{generation}, $\gen(v)$ defined as the number of ``hops'' from the root $o$ to vertex $v$; that is, 
\begin{equation*}
\gen(v) = 
\begin{cases}
0 &\text{if }v = o,\\
\ell(\gamma_{ov}) &\text{if }v\ne o.
\end{cases}
\end{equation*}
Given two distinct edges $e,e'\in E$ and $e$ and $e'$ are adjacent, then $e$ is called the \emph{parent} of $e'$ (or $e'$ is called a \emph{child} of $e$) and $e$ is an \emph{ancestor} of $e'$ ( or $e'$ is a \emph{descendent} of $e$) if every path from the root that contains $e'$ also contains $e$. Equivalently, given two vertices $u$ and $v \in V$, we say $u$ is a \emph{parent} of $v$ (or $v$ is a \emph{child} of $u$) if $\{u,v\} \in E$ and $u$ is an \emph{ancestor} of $v$ (or $v$ is a descendent of $u$) if the path $\gamma_{ov}$ passes through the vertex $u$. Note that in a rooted tree, each edge other than the edges of the first generation has a unique parent, but the edge can have many children. Similarly, each vertex other than the root has a unique parent, but the vertex can have many children. Given an edge $e \in E$, the unique parent edge is denoted by $p(e)$, and the set of children is denoted by $c(e)$. The number of children of edge $e$ is indicated by $C(e)=|c(e)|$. We call a tree locally finite if $C(e)<\infty$ for all $e\in E$. In this paper, a tree  is a \emph{proper infinite tree} if it is infinite, locally finite and $C(e)\ge 1$ for all $e\in E$.\\

A \emph{radially symmetric tree} is a proper tree for which the number of generation $C(e)$ of an edge $e$ depends only on the generation of $e$; that is, $C(e)=C(gen(e))$. In other words, an infinite tree in which each edge of a generation has the same number of children. By the \emph{ball $B_n$ of radius n}, we mean the set of edges that are within $n$ hops from the root. That is, \begin{equation*}
    B_n :=\{e \in E: \gen(e) \le n \}
\end{equation*}
By the \emph{shell of radius n}, we mean the set
\begin{equation*}
    S_n:=B_n \setminus B_{n-1}=\{e \in E: \gen(e)=n\}
\end{equation*}
Let $G=(V,E,o)$ be a proper tree and let $G'=(V',E',o)$ be a \emph{subtree} with same root $o$. We say that $G'$ is a truncation of tree $G$ if 
\begin{equation*}
    \forall e \in E \quad \text{either} \quad c(e) \cap E' = \emptyset \quad \text{or} \quad  c(e) \cap E' = c(e)
\end{equation*}
In words, for each $e \in E$, $G'$ either contains all or none of $e'$s children in $G$.\\

In this paper, in a rooted tree, every path $\gamma=e_1e_2e_3\cdots$ starting at the root, $o$ is considered as a \emph{descending path} in which $e_i=p(e_{i+1})$ for $i=1,2,3, \cdots$. The family of all descending paths in the proper infinite tree $G$ is represented by $\Gamma$. The \emph{length} of a descending path $\gamma\in \Gamma$ can be defined as   

\begin{equation*}
\ell(\gamma) := \sup_{e\in\gamma}\gen(e),
\end{equation*}
If $\gamma$ is finite, then the $\ell(\gamma)$ has some finite value and coincides graph length of $\gamma$. Otherwise, we say it has a length of $+\infty$. For $n \ge 1$, we define subfamilies $\Gamma_n$ and $\Gamma_{\infty}$ of the family $\Gamma$ as   
\begin{equation*}
	\Gamma_n := \{\gamma\in\Gamma : \ell(\gamma) = n\} \quad\text{and}\quad
	\Gamma_\infty := \{\gamma\in\Gamma : \ell(\gamma) = +\infty \}
\end{equation*}
respectively. For a given $1\le m\le n\le\infty$ and if $\gamma\in\Gamma_n$ then there exists a unique $\gamma'\in\Gamma_m$ such that $\gamma'$ is a sub-path of $\gamma$. This sub-path relationship is denoted by $\gamma'\preceq\gamma$. A \emph{cut} set for $\Gamma$ in an infinite tree is a subset of edges $C\subseteq E$ such that $|\gamma\cap C|>0$ for every $\gamma\in\Gamma$. \newline

Initially, the theory of \emph{$p$-}modulus came out of the theory of conformal modulus in complex analysis (see \cite{Ahlfors1973CONFORMALTHEORY}). Intuitively, \emph{$p$-}modulus provides a method for quantifying the richness of a family of curves in the sense that a family with many short curves will have a larger modulus than a family with fewer and longer curves. The \emph{$p$-}modulus in the discrete finite graph has been studied in~\cite{Resume2009EMPILEMENTSCOMBINATOIRES, Odedmodulusindiscrete}. Recently, the \emph{$p$-}modulus of a family of objects on the finite networks has been introduced, and its general properties have been studied (see, ~\cite{Albin2015ModulusQuantities, Albin2018BlockingApplications, AlbinModulusGraphs, Albin2018ModulusMO}). The \emph{$p$-}modulus in networks has been a versatile tool to measure structural properties and applications, including clustering and community detection, the measure of centrality, the construction of a large class of graph metrics, hierarchical graph decomposition, and the solution to game-theoretic models of
secure network broadcast (see \cite{ALBIN2021112282, securebroadcast, SHAKERI2018127, PhysRevE.95.012316}).  \\

An infinite undirected tree can be thought of as an electrical network with unit resistance (see \cite{Doyle2000RandomNetworks, Vatamanelu2010TheNetworkb}). The effective resistance in an infinite rooted tree is the reciprocal of the limit of the effective conductance in a truncated tree. (see \cite{ Dueein1962TheNetwork, LyonsProbabilityNetworks}). The modulus $\Mod_{2, \sigma }(\Gamma_n)$ of the family of descending paths from the root on the truncated (finite) rooted tree is the effective conductance (the reciprocal of the effective resistance) (see \cite{AlbinModulusGraphs}). A random walk on an infinite connected tree is related to the effective conductance of the tree. The random walk is transient if and only if the effective conductance from any of its vertices to infinity is positive (see, \cite{Doyle2000RandomNetworks, LyonsProbabilityNetworks}). A random walk's transient or recurrent nature can be expressed in terms of the voltage or the escape probability. (see,\cite{Vatamanelu2010TheNetworkb, LyonsProbabilityNetworks}).

In this paper, we expand upon the finite case to develop a theoretical framework for the \emph{$p$-}modulus on proper infinite trees and extend several fundamental properties of the modulus on finite graphs to such trees. We prove that the $p$-modulus of the family of infinite descending paths on an infinite proper tree is the limit of the \emph{$p$-}modulus of a family of finite descending paths. This relation is established as 
\begin{theorem}\label{thm:mod-limit}
For any $p\in(1,\infty)$, 
\begin{equation*}
\lim_{n\to\infty}\Mod_{p,\sigma}(\Gamma_n)
= \Mod_{p,\sigma}(\Gamma_\infty).
\end{equation*}
\end{theorem}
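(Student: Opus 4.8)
The plan is to prove the two inequalities separately, the ``$\ge$'' one being elementary and the ``$\le$'' one carrying all the work.

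\emph{The easy inequality.} I would first record that, by the sub-path structure noted in the excerpt, every $\gamma\in\Gamma_{n+1}$ and every $\gamma\in\Gamma_\infty$ contains a unique sub-path $\gamma'\in\Gamma_n$ with $\gamma'\preceq\gamma$, and $\gamma'\preceq\gamma$ forces $\sum_{e\in\gamma}\rho(e)\ge\sum_{e\in\gamma'}\rho(e)$ for every nonnegative density $\rho$. Hence $\Adm(\Gamma_n)\subseteq\Adm(\Gamma_{n+1})$ and $\Adm(\Gamma_n)\subseteq\Adm(\Gamma_\infty)$, so $\bigl(\Mod_{p,\sigma}(\Gamma_n)\bigr)_n$ is non-increasing and bounded below by $\Mod_{p,\sigma}(\Gamma_\infty)$; thus $L:=\lim_n\Mod_{p,\sigma}(\Gamma_n)$ exists and $L\ge\Mod_{p,\sigma}(\Gamma_\infty)$. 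Since the tree is locally finite, $\mathbbm{1}_{S_1}$ is admissible for every $\Gamma_n$ ($n\ge 1$) with finite energy, so all these moduli are finite and $L<\infty$.

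\emph{Strategy for the hard inequality $L\le\Mod_{p,\sigma}(\Gamma_\infty)$.} For each $n\ge 1$ let $\eta_n$ be the unique extremal density for $\Mod_{p,\sigma}(\Gamma_n)$ (it exists by the finite-graph theory, being supported on $B_n$), so $\mathcal{E}_{p,\sigma}(\eta_n):=\sum_{e\in E}\sigma(e)\eta_n(e)^p=\Mod_{p,\sigma}(\Gamma_n)\searrow L$. For $m\le n$ one has $\eta_m\in\Adm(\Gamma_m)\subseteq\Adm(\Gamma_n)$, so the midpoint $\tfrac12(\eta_m+\eta_n)$ is admissible for $\Gamma_n$, giving $\|\tfrac12(\eta_m+\eta_n)\|_{p,\sigma}^p\ge\Mod_{p,\sigma}(\Gamma_n)=\|\eta_n\|_{p,\sigma}^p$; together with $\|\eta_m\|_{p,\sigma}^p,\|\eta_n\|_{p,\sigma}^p\to L$ this squeezes $\|\tfrac12(\eta_m+\eta_n)\|_{p,\sigma}\to L^{1/p}$, and uniform convexity of $\ell^p(E,\sigma)$ (valid exactly for $p\in(1,\infty)$) then forces $\|\eta_m-\eta_n\|_{p,\sigma}\to 0$. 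So $\eta_n\to\eta$ in $\ell^p(E,\sigma)$ for some $\eta\ge 0$ with $\mathcal{E}_{p,\sigma}(\eta)=L$.

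\emph{The main obstacle, and how I would handle it.} The limit $\eta$ need \emph{not} be admissible for $\Gamma_\infty$ (for a single ray the $\eta_n$ spread out and $\eta\equiv 0$), since $\ell^p$-convergence gives no control on $\sum_{e\in\gamma}\eta_n(e)$ along an infinite path. I would bypass this with an exceptional-family argument. Put $\Gamma_0:=\{\gamma\in\Gamma_\infty:\sum_{e\in\gamma}\eta(e)<1\}$ and, for $j\ge 1$, $\Gamma_0^{(j)}:=\{\gamma\in\Gamma_\infty:\sum_{e\in\gamma}\eta(e)\le 1-\tfrac1j\}$, so $\Gamma_0=\bigcup_j\Gamma_0^{(j)}$. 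For $\gamma\in\Gamma_0^{(j)}$ the series $\sum_{e\in\gamma}\eta(e)$ converges, while $\sum_{e\in\gamma}\eta_n(e)\ge 1$ (it dominates the sum over the $\Gamma_n$-truncation of $\gamma$, on which $\eta_n$ is admissible); hence $\sum_{e\in\gamma}|\eta_n(e)-\eta(e)|\ge\sum_{e\in\gamma}\eta_n(e)-\sum_{e\in\gamma}\eta(e)\ge\tfrac1j$, so $j|\eta_n-\eta|\in\Adm(\Gamma_0^{(j)})$ and $\Mod_{p,\sigma}(\Gamma_0^{(j)})\le j^p\|\eta_n-\eta\|_{p,\sigma}^p\to 0$, i.e.\ $\Mod_{p,\sigma}(\Gamma_0^{(j)})=0$. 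By countable subadditivity of $p$-modulus, $\Mod_{p,\sigma}(\Gamma_0)=0$. Since $\eta$ is admissible for $\Gamma_\infty\setminus\Gamma_0$, given $\varepsilon>0$ I pick $\phi\in\Adm(\Gamma_0)$ with $\mathcal{E}_{p,\sigma}(\phi)<\varepsilon$ and note $(\eta^p+\phi^p)^{1/p}\in\Adm(\Gamma_\infty)$ with energy $\mathcal{E}_{p,\sigma}(\eta)+\mathcal{E}_{p,\sigma}(\phi)<L+\varepsilon$, whence $\Mod_{p,\sigma}(\Gamma_\infty)\le L+\varepsilon$; letting $\varepsilon\to 0$ and combining with the easy inequality finishes the proof. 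The two delicate points I expect to spell out carefully are (i) the countable subadditivity of $p$-modulus and the legitimacy of the exceptional-family modification in this infinite-graph setting, and (ii) the uniform-convexity step, which is precisely where the hypothesis $p\in(1,\infty)$ is used.
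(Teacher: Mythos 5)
There is a genuine gap, and it is structural: the second half of your argument proves the wrong inequality. From $\Adm(\Gamma_n)\subseteq\Adm(\Gamma_\infty)$ you correctly obtain the easy bound $\Mod_{p,\sigma}(\Gamma_\infty)\le\Mod_{p,\sigma}(\Gamma_n)$ for every $n$, hence $\Mod_{p,\sigma}(\Gamma_\infty)\le L$. The remaining, hard, inequality is therefore $L\le\Mod_{p,\sigma}(\Gamma_\infty)$, which you state correctly at the start of your second part --- but the construction you then carry out ends by producing a density admissible for $\Gamma_\infty$ with energy at most $L+\varepsilon$, i.e.\ with the conclusion $\Mod_{p,\sigma}(\Gamma_\infty)\le L+\varepsilon$. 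That is the easy inequality again; combined with $L\ge\Mod_{p,\sigma}(\Gamma_\infty)$ it yields only $\Mod_{p,\sigma}(\Gamma_\infty)\le L$, not equality. Nothing in your machinery bounds $\mathcal{E}_{p,\sigma}(\rho)$ from below for an \emph{arbitrary} $\rho\in\Adm(\Gamma_\infty)$, which is what $L\le\Mod_{p,\sigma}(\Gamma_\infty)$ demands: the uniform-convexity limit $\eta$ of the truncated extremal densities $\eta_n$, and the exceptional family $\Gamma_0$, only concern $\eta$ itself, never a competitor density for the infinite family.

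The missing ingredient is exactly what the paper's proof is built on: fixing a near-optimal $\rho\in\Adm(\Gamma_\infty)$ with $\mathcal{E}_{p,\sigma}(\rho)\le\Mod_{p,\sigma}(\Gamma_\infty)+\epsilon$, one needs $\lim_{n\to\infty}\ell_\rho(\Gamma_n)=\ell_\rho(\Gamma_\infty)\ge 1$ (Lemma~\ref{lemma:infinitelength}, proved by a K\"onig-type diagonal argument using local finiteness); then $\rho/\ell_\rho(\Gamma_n)\in\Adm(\Gamma_n)$ gives $\Mod_{p,\sigma}(\Gamma_n)\le\mathcal{E}_{p,\sigma}(\rho)/\ell_\rho(\Gamma_n)^p$, and letting $n\to\infty$ yields $L\le\mathcal{E}_{p,\sigma}(\rho)\le\Mod_{p,\sigma}(\Gamma_\infty)+\epsilon$. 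Your proposal contains no substitute for this step, and without some control relating $\ell_\rho$ on $\Gamma_n$ to $\ell_\rho$ on $\Gamma_\infty$ (or an equivalent compactness/duality input) the hard direction does not follow. The internal pieces you develop --- Clarkson-type convergence of the $\eta_n$, the zero-modulus exceptional family, countable subadditivity --- are fine in outline and would be useful elsewhere (for instance, to show that the optimal density for $\Gamma_\infty$, when it exists as in Theorem~\ref{thm:mod-opt-den}, is the limit of the truncated extremal densities), but as written they do not prove the theorem.
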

For radially symmetric infinite trees, this allows us to express the modulus through the series
\begin{equation*}
\Mod_{p,\sigma}(\Gamma_\infty)
= \left(\sum_{k=1}^\infty\left(\sigma_k|S_k|\right)^{-\frac{q}{p}}\right)^{-\frac{p}{q}}
\end{equation*}
where $q$ is the H\"older conjugate exponent of $p$, that is, $\frac{1}{p}+\frac{1}{q}=1$.  This equality can even be interpreted when the series on the right-hand side diverges.  In this case, the modulus is zero.\\

For finite graphs, $p$-modulus has interesting interpretations depending on the choice of the parameter $p$ and the family of objects. The object in the graph is a combinatorial or structural component such as walks, paths, cuts, spanning trees, partitions, stars, and edge covers. For example, as shown in~\cite{AlbinModulusGraphs, Albin2015ModulusQuantities}, the $p$-modulus of a family of paths connecting two vertices, $u$ and $v$, is related to several well-known graph quantities.  The $2$-modulus is the effective conductance between $u$ and $v$, the $1$-modulus is the value of the minimum cut disconnecting $u$ and $v$, and the $\infty$-modulus is the reciprocal of the length of the shortest path between $u$ and $v$.

In this paper, we establish analogous properties for the $p$-modulus of the family of infinite descending paths on a radially symmetric tree. For example, for $p=2$, we establish the connection between \emph{$2$-}modulus and characteristics of random walks. For $p=1$ and $p=\infty$, we derive a special dual problem of $p$-modulus, providing a lower bound. For $p=1$, the modulus in the radially symmetric tree is the infimum over cuts for descending paths. For a cut set $C\subseteq E$, the following relation is established 
\begin{theorem}\label{theorem:min-cut radially}
Let $G=(V,E,\sigma,o)$ be a radially symmetric tree. Then
\begin{equation*}
    \Mod_{1,\sigma}(\Gamma_\infty) =  \inf \{\sigma(C): C \, \text{is a cut of} \,\, \Gamma_\infty\}.
\end{equation*}
\end{theorem}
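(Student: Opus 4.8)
The plan is to establish the two inequalities separately. The bound $\Mod_{1,\sigma}(\Gamma_\infty)\le\inf\{\sigma(C):C\text{ a cut of }\Gamma_\infty\}$ is the easy one: for any cut $C$ the density $\rho:=\mathbbm{1}_C$ is admissible for $\Gamma_\infty$, since $\ell_\rho(\gamma):=\sum_{e\in\gamma}\rho(e)=|\gamma\cap C|\ge 1$ for every $\gamma\in\Gamma_\infty$, and its $1$-energy is $\sum_{e\in E}\sigma(e)\rho(e)=\sigma(C)$; taking the infimum over cuts gives the claim. This also shows both sides are finite, since the first shell $S_1$ is itself a cut with $\sigma(S_1)=\sigma_1|S_1|<\infty$, so the degenerate case in which both sides are infinite does not arise.

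For the reverse inequality I would use a sweep-out (coarea-type) argument that manufactures cuts out of an admissible density. Fix $\rho\in\Adm(\Gamma_\infty)$ with finite $1$-energy $\mathcal{E}(\rho):=\sum_{e\in E}\sigma(e)\rho(e)$; for an edge $e$ let $e_1,\dots,e_m$ be the edges of the unique root-to-$e$ path ($e_m=e$, $m=\gen(e)$), and set $F(e):=\sum_{j=1}^{m}\rho(e_j)$ and $F^-(e):=F(e)-\rho(e)$, with the convention $F^-(e):=0$ when $\gen(e)=1$ (so $F^-(e)=F(p(e))$ otherwise). For $\lambda\in(0,1)$ put
\[
C_\lambda:=\{e\in E:\ F^-(e)<\lambda\le F(e)\}.
\]
The first step is to check that each $C_\lambda$ is a cut of $\Gamma_\infty$: along any $\gamma=e_1e_2\cdots\in\Gamma_\infty$ the partial sums $F(e_k)$ are nondecreasing in $k$ with supremum $\ell_\rho(\gamma)\ge 1>\lambda$, so there is a first index $k_0$ with $F(e_{k_0})\ge\lambda$, and then $F^-(e_{k_0})<\lambda\le F(e_{k_0})$, that is, $e_{k_0}\in\gamma\cap C_\lambda$. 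The second step is a coarea estimate: by Tonelli,
\[
\int_0^1\sigma(C_\lambda)\,d\lambda=\sum_{e\in E}\sigma(e)\int_0^1\mathbbm{1}\{F^-(e)<\lambda\le F(e)\}\,d\lambda\le\sum_{e\in E}\sigma(e)\rho(e)=\mathcal{E}(\rho),
\]
since for each $e$ the inner integral is the length of $(F^-(e),F(e)]\cap(0,1]$, hence at most $F(e)-F^-(e)=\rho(e)$. Consequently $\sigma(C_\lambda)\le\mathcal{E}(\rho)$ for some $\lambda\in(0,1)$, so $\inf\{\sigma(C)\}\le\mathcal{E}(\rho)$; taking the infimum over $\rho\in\Adm(\Gamma_\infty)$ gives $\inf\{\sigma(C)\}\le\Mod_{1,\sigma}(\Gamma_\infty)$, and combining the two bounds proves the theorem.

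I expect the one genuinely delicate point to be that first step of the reverse inequality --- confirming that each $C_\lambda$ really is a cut of $\Gamma_\infty$. This is where admissibility on the infinite family is used rather than admissibility on the truncated families $\Gamma_n$, and one has to exclude the possibility that the $\rho$-partial sums along an infinite descending path never reach $\lambda$; the monotonicity of $F$ along a path together with $\ell_\rho(\gamma)\ge 1$ rules this out, but I would want to write it carefully, and likewise check that $\lambda\mapsto\sigma(C_\lambda)$ is measurable for the coarea step. The remaining points --- the Tonelli interchange, restricting the $\lambda$-integral to $(0,1)$, and passing from ``the average over $\lambda$ is at most $\mathcal{E}(\rho)$'' to ``some particular $\lambda$ works'' --- are routine.

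Finally I would record the extra structure that radial symmetry supplies, and which the argument above does not use. Each shell $S_k$ is a cut with $\sigma(S_k)=\sigma_k|S_k|$, so $\inf\{\sigma(C)\}\le\inf_{k\ge 1}\sigma_k|S_k|$; conversely, averaging the admissibility inequality $\ell_\rho(\gamma)\ge 1$ over a uniformly random infinite descending path --- which places mass $|S_{\gen(e)}|^{-1}$ on the set of ends through $e$ --- shows $\sum_{k\ge 1}|S_k|^{-1}\sum_{e\in S_k}\rho(e)\ge 1$, whence
\[
\mathcal{E}(\rho)=\sum_{k\ge 1}(\sigma_k|S_k|)\Bigl(|S_k|^{-1}\sum_{e\in S_k}\rho(e)\Bigr)\ \ge\ \inf_{k\ge 1}\sigma_k|S_k|
\]
for every $\rho\in\Adm(\Gamma_\infty)$, so $\Mod_{1,\sigma}(\Gamma_\infty)\ge\inf_{k\ge 1}\sigma_k|S_k|$. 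Together with the easy bound this gives a self-contained second proof in the radially symmetric case, and identifies all three quantities with $\inf_{k\ge 1}\sigma_k|S_k|$, consistent with the series formula accompanying Theorem~\ref{thm:mod-limit}; note in particular that this common value need not equal $\lim_{p\to 1^+}\Mod_{p,\sigma}(\Gamma_\infty)$ (that limit can be strictly smaller), so one cannot simply pass to the limit in the series formula.
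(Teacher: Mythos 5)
Your proof is correct, but the nontrivial inequality is handled by a genuinely different argument than the paper's. The easy direction (indicator of a cut is admissible) is the same. For $\Mod_{1,\sigma}(\Gamma_\infty)\ge\inf\{\sigma(C)\}$ the paper does not construct cuts from densities at all: it invokes its dual machinery, namely Theorem~\ref{thm:mod-lower-bound-eta} with the radially symmetric $\eta\in\Lambda$ that spreads unit mass uniformly over each shell (Corollary~\ref{coro:lowerboundMod1}), giving $\Mod_{1,\sigma}(\Gamma_\infty)\ge\inf_{k}\sigma(S_k)$, and then observes that every shell $S_k$ is itself a cut, so $\inf_k\sigma(S_k)\ge\inf\{\sigma(C)\}$. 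Your ``second proof'' via averaging $\ell_\rho(\gamma)\ge 1$ over a uniformly random descending path is exactly this argument in probabilistic dress, so you have in effect reproduced the paper's route as an aside. Your primary argument --- the sweep-out cuts $C_\lambda=\{e: F^-(e)<\lambda\le F(e)\}$ together with the Tonelli/coarea estimate $\int_0^1\sigma(C_\lambda)\,d\lambda\le\mathcal{E}_{1,\sigma}(\rho)$ --- is sound (the cut property follows, as you note, from monotonicity of the partial sums and $\ell_\rho(\gamma)\ge 1>\lambda$, and measurability of $\lambda\mapsto\sigma(C_\lambda)$ is clear as a countable sum of indicators), and it buys more than the paper's proof: it nowhere uses radial symmetry of the tree or of $\sigma$, so it establishes the max-flow/min-cut-type identity $\Mod_{1,\sigma}(\Gamma_\infty)=\inf\{\sigma(C)\}$ on any proper infinite tree, whereas the paper's proof is tied to the symmetric $\eta$ and additionally identifies the common value with $\inf_k\sigma_k|S_k|$, which is special to the radially symmetric case. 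Your closing remark that this value can exceed $\lim_{p\to1^+}\Mod_{p,\sigma}(\Gamma_\infty)$ is also correct (the unweighted $1$-ary path has $\Mod_{1,1}(\Gamma_\infty)=1$ but $\Mod_{p,1}(\Gamma_\infty)=0$ for all $p>1$), so the theorem indeed cannot be obtained by passing to the limit in the series formula.
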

For $p=\infty$, the modulus is the reciprocal of the weighted length of the family of descending paths. Furthermore, we investigate some properties of the $p$-modulus based on the parameters $p$ and $\sigma$. In the particular case of uniformly elliptic and bounded weights, the modulus in a weighted tree is equivalent to the modulus in an unweighted tree. More importantly, we we establish the relation $\Mod_{1,\sigma}(\Gamma_\infty)>0$ if and only if $\Mod_{p,\sigma}(\Gamma_\infty)>0$ and  
\begin{equation} \label{limit_infty}
\lim_{p\to\infty}\Mod_{p,\sigma}(\Gamma_\infty)^{\frac{1}{p}} = 0 = \Mod_{\infty,\sigma}(\Gamma_\infty).
\end{equation}
Finally, we define the critical exponent for the $p$-modulus on a $1$-$2$ radially symmetric tree as 
\begin{equation}
    p_c=\sup\{p >1 : \Mod_{p,1}(\Gamma_{\infty})>0\}
\end{equation}
Such parameter $p_c$ exists in a proper infinite tree specifically in a $1-2$ radially symmetric binary tree because of $\Mod_{1,1}(\Gamma_{\infty})>0$ or in general, if $\Mod_{p,1}(\Gamma_{\infty})>0$ for some $p$ and by (\ref{limit_infty}). This critical parameter $p_c$ demarcates the boundary between zero and finite modulus. Equivalently, the critical exponent implies the infinite tree is \textit{bushy} or \textit{skinny} in terms of whether a random walk is transient or recurrent. In this sense, the modulus can measure the dimension of an infinite tree. An example in chapter \ref{Chapter:critical_exponent} demonstrates the existence of a $1$-$2$ radially symmetric binary tree of non-trivial critical exponent, and the fact will be established as
\begin{theorem}\label{theorem:existence_critical}
For any $1<r<\infty$, there exists an unweighted 1-2 tree with critical exponent $p_c=r$.
\end{theorem}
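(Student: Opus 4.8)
The plan is to read off the answer from the series representation of the modulus on radially symmetric trees and then exhibit a tree realizing it.

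\emph{Reduction.} On an unweighted radially symmetric tree all edge weights are $1$, so the series formula for the modulus on radially symmetric trees becomes
\begin{equation*}
\Mod_{p,1}(\Gamma_\infty)=\Bigl(\sum_{k=1}^{\infty}|S_k|^{-q/p}\Bigr)^{-p/q},\qquad\frac1p+\frac1q=1 .
\end{equation*}
Since $q/p=\tfrac1{p-1}$ and every partial sum is at least $|S_1|^{-q/p}>0$, we get $\Mod_{p,1}(\Gamma_\infty)>0$ iff $\sum_{k\ge1}|S_k|^{-1/(p-1)}<\infty$, and $\Mod_{p,1}(\Gamma_\infty)=0$ otherwise. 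As $|S_k|\ge1$, the map $s\mapsto\sum_k|S_k|^{-s}$ is non-increasing, so $\{p>1:\Mod_{p,1}(\Gamma_\infty)>0\}$ is an interval with left endpoint $1$; thus it suffices to build, for the given $r\in(1,\infty)$, an unweighted $1$-$2$ tree for which $\sum_{k\ge1}|S_k|^{-1/(p-1)}<\infty$ exactly when $p<r$.

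\emph{Construction.} In a radially symmetric $1$-$2$ tree $|S_k|=2^{a_k}$, where $a_k$ is the number of branching generations among $0,1,\dots,k-1$; so $(a_k)$ is non-decreasing with increments in $\{0,1\}$, and conversely any such sequence is realized by a unique unweighted $1$-$2$ tree. The target is $a_k=\lfloor(r-1)\log_2 k\rfloor$, since then $|S_k|^{-1/(p-1)}=2^{-a_k/(p-1)}$ is comparable to $k^{-(r-1)/(p-1)}$. The increments of $\lfloor(r-1)\log_2 k\rfloor$ lie in $\{0,1\}$ once $(r-1)\log_2(1+\tfrac1k)<1$, i.e.\ for $k\ge K_0:=\lfloor 1/(2^{1/(r-1)}-1)\rfloor+1$, but may be larger for small $k$; so I would prepend a path. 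Concretely, let generations $0,1,\dots,K_0-1$ each have a single child, and for $n\ge K_0$ let a generation-$n$ edge have two children precisely when $\lfloor(r-1)\log_2(n+1)\rfloor>\lfloor(r-1)\log_2 n\rfloor$. By the choice of $K_0$ every generation has one or two children, so this is a legitimate unweighted $1$-$2$ tree, and telescoping yields $a_k=0$ for $k\le K_0$ and $a_k=\lfloor(r-1)\log_2 k\rfloor-\lfloor(r-1)\log_2 K_0\rfloor$ for $k\ge K_0$.

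\emph{Conclusion.} From $a_k=(r-1)\log_2 k+O(1)$ there are constants $0<c_1\le c_2<\infty$, depending only on $p$ and $r$, with $c_1 k^{-(r-1)/(p-1)}\le|S_k|^{-1/(p-1)}\le c_2 k^{-(r-1)/(p-1)}$ for $k\ge K_0$. Hence $\sum_k|S_k|^{-1/(p-1)}$ converges iff $\sum_k k^{-(r-1)/(p-1)}$ does, i.e.\ iff $(r-1)/(p-1)>1$, i.e.\ $p<r$. By the reduction, $\Mod_{p,1}(\Gamma_\infty)>0$ for exactly $p\in(1,r)$, so this tree has critical exponent $p_c=r$. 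The one delicate point is the one flagged above: making $a_k$ track $(r-1)\log_2 k$ while keeping every branching number $\le 2$, which is exactly why the initial path of length $K_0$ (a length that grows with $r$) is needed; everything else is the series formula together with the $p$-series comparison test.
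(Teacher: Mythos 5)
Your proof is correct, and it reaches the same tree (up to constants) as the paper but by a different bookkeeping route. The paper works with the skip-sequence form of the modulus, equation \eqref{mod:1-2}: it takes $c_k=\lceil 2^{k/(r-1)}\rceil$, notes that any integer sequence with $c_k\ge 1$ is automatically a legitimate skip sequence (so no admissibility check on the construction is needed), and then sandwiches $c_k/2^{k/(p-1)}$ between geometric terms, getting convergence exactly for $1<p<r$ by geometric-series comparison. You instead prescribe the shell sizes directly through \eqref{eq:mod-formula-unweighted}, aiming at $|S_k|=2^{a_k}$ with $a_k=\lfloor(r-1)\log_2 k\rfloor+O(1)$, which is why you must verify that the increments of $\lfloor(r-1)\log_2 k\rfloor$ are eventually $0$ or $1$ and prepend an initial path of length $K_0$; your convergence test is then a comparison with the $p$-series $\sum_k k^{-(r-1)/(p-1)}$. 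Both arguments are complete: your handling of the floor-increment issue (the choice $K_0=\lfloor 1/(2^{1/(r-1)}-1)\rfloor+1$ and the telescoping identity for $a_k$) is exactly the point that the paper's skip-sequence parametrization sidesteps, while your version has the small advantage of making the polynomial growth $|S_k|\asymp k^{r-1}$ explicit, which is the natural way to see why $p_c=r$ plays the role of a dimension. Your preliminary observation that $\{p>1:\Mod_{p,1}(\Gamma_\infty)>0\}$ is an interval is correct but not needed (it is the paper's Lemma~\ref{lemma:symmetric_pmono}), since you end up determining the positivity set exactly as $(1,r)$ anyway.
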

In summary, in Section 2, we review notations, definitions, and properties of the modulus, particularly in the context of infinite trees with descending paths. Section 3 presents the computing formula \emph{$p$-}modulus and special dual formulation. In Section 4, we present some properties of \emph{$p$-}modulus in terms of parameters. In Section 5, we examine the existence of a critical parameter that distinguishes between the modulus being positive and zero. Section $7$ ends with a summary and remarks.

\section{$p$-modulus on infinite trees}\label{section-definition of modulus}

Given a tree $G=(V,E, \sigma, o)$, a \emph{density} on $G$ is a function $\rho:E \to \mathbb{R}_{\ge 0}$ on the edge set $E$. It can thought of as a cost function on the edge set. In particular, $\rho(e)$ is the cost per unit usage of edge $e$. For a density $\rho$ and $\gamma \in \Gamma$, we define $\rho$-length of $\gamma$ as 
\begin{equation*}
\ell_{\rho}(\gamma):=\sum_{e \in \gamma} \rho(e), 
\end{equation*}
In particular, $\rho \equiv 1$, the $\rho$-length is the length of path $\gamma$. For a descending family $\Gamma$, the $\rho$-length of $\Gamma$ is defined as   
\begin{equation*}
\ell_{\rho}(\Gamma):= \inf_{\gamma \in \Gamma}\ell_{\rho}(\gamma).
\end{equation*}
A density $\rho$ is \emph{admissible} for $\Gamma$ if
\begin{equation*}
\ell_{\rho}(\gamma) \ge 1 \qquad \forall \gamma \in \Gamma;
\end{equation*}
 and we define the set $\Adm(\Gamma)$ to be set of all admissible densities. That is,  
 \begin{equation*}
 \Adm(\Gamma)=\{\rho \in \mathbb{R}^E_{\ge 0}: \ell_{\rho}(\Gamma) \ge 1\}.
 \end{equation*}
Now, given a real parameter $ 1 \le p \leq \infty$, the \emph{$p$-energy} of a density $\rho$ on $G$ is defined to be
\[
  \mathcal{E}_{p, \sigma}(\rho) =
  \begin{cases}
   \sum\limits_{e\in E}\sigma(e)\rho(e)^p  & \text{if $1 \le p < \infty$} \\
    \max\limits_{e\in E} \sigma(e)\rho(e) & \text{if $p = \infty$}
  \end{cases}
\]
Note that
\begin{equation*}
\lim_{p \to \infty}(\mathcal{E}_{p, \sigma^{p}}(p))^\frac{1}{p}= \max _{e\in E} \sigma(e)\rho(e) = \mathcal{E}_{\infty,\sigma}(\rho)
\end{equation*}
 and both $ \mathcal{E}_{p, \sigma}(\rho)$ and $\ell_{\rho}(\gamma)$ sums over countable or the countably infinite set $E$ and their terms are non-negative.  Thus, in both cases, the partial sums are monotone; each sum either converges to a finite value or diverges.  In the latter case, we assume the value of the sum to be $+\infty$.
\begin{definition}
Given a graph $G=(V,E, \sigma)$ with non-empty family of descending paths $\Gamma$ and exponent $ 1 \le p \leq \infty$, the \emph{$p$-}modulus of $\Gamma$, denoted by $\Mod_p(\Gamma)$ is defined as the value of 
\begin{equation}\label{defn:p-modolus}
\begin{split}
\text{minimize}\qquad& \mathcal{E}_{p,\sigma}(\rho),\\
\text{subject to}\qquad&\ell_{\rho}(\gamma) \ge 1 \qquad  \forall\, \gamma \in \Gamma; \\
\qquad& \rho(e)\ge 0 \qquad  \forall \, e \in E.
\end{split}
\end{equation}
\end{definition}
A density $\rho^* \in \Adm(\Gamma)$ is called \emph{extremal} or optimal density if (\ref{defn:p-modolus}) is minimized at $\rho^*$ or  $\mathcal{E}_{p,\sigma}(\rho^*)=\Mod_{p,\sigma}(\Gamma)$. If the graph $G$ is finite, then the $p$-modulus ~(\ref{defn:p-modolus}) on the graph is a standard convex optimization problem, and the unique extremal exists for $1<p < \infty$ (see \cite{AlbinModulusGraphs}). For the infinite setting in~(\ref{defn:p-modolus}), we must take a little care when dealing with the infinite sums. 

Let $G=(E,V, \sigma,o)$ be a proper infinite tree and $\Gamma$, $\Gamma_n$ and $\Gamma_\infty$ be the families of descending paths described in the Section~\ref{Introduction}. Let $n$ be any generation and define
\begin{equation*}
\rho(e) = 
\begin{cases}
 1 & \text{if }e\in S_n,\\
 0 & \text{if }e\notin S_n.
\end{cases}
\end{equation*}
This density is admissible since every $\gamma\in\Gamma_\infty$ contains an edge in $S_n$; hence, its energy provides an upper bound on the modulus. Taking the infimum of energy (~\ref{defn:p-modolus}) over all generations establishes the lemma, providing an upper modulus bound. 
\begin{lemma}\label{lemma:p2bounded}
For all $p\in[1,\infty]$,
\begin{equation*}
\Mod_{p,\sigma}(\Gamma_\infty)\le \inf_n\sigma(n)|S_n|.
\end{equation*}
\end{lemma}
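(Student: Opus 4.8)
The plan is to exhibit, for each generation $n$, an explicit density that is admissible for $\Gamma_\infty$ and whose $p$-energy is at most $\sigma(n)|S_n|$, and then to take the infimum over $n$. Since $\Mod_{p,\sigma}(\Gamma_\infty)$ is an infimum of energies over all admissible densities, every admissible density automatically furnishes an upper bound, so no optimization is needed — only a good choice of test density.

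First I would fix $n\ge 1$ and set $\rho_n:=\mathbbm{1}_{S_n}$, the indicator of the $n$-th shell. The key observation is that every infinite descending path $\gamma=e_1e_2e_3\cdots\in\Gamma_\infty$ has $\gen(e_k)=k$ for every $k$ — this is immediate from the definition of a descending path starting at the root with $e_i=p(e_{i+1})$ — so $\gamma$ meets $S_n$ in exactly the single edge $e_n$. Hence $\ell_{\rho_n}(\gamma)=\sum_{e\in\gamma}\rho_n(e)=1\ge 1$, which shows $\rho_n\in\Adm(\Gamma_\infty)$. It is worth noting explicitly that $\rho_n$ need not be admissible for all of $\Gamma$: a finite descending path of length $<n$ never reaches $S_n$, which is precisely why the bound is stated for $\Gamma_\infty$.

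Next I would compute the energy of $\rho_n$. For $1\le p<\infty$ we have $\mathcal{E}_{p,\sigma}(\rho_n)=\sum_{e\in E}\sigma(e)\rho_n(e)^p=\sum_{e\in S_n}\sigma(e)$; in a radially symmetric tree $\sigma$ is constant on each shell with common value $\sigma(n)$, so this equals $\sigma(n)|S_n|$ (and if $\sigma(n)$ is read as $\sup_{e\in S_n}\sigma(e)$ it is at most $\sigma(n)|S_n|$ in general). For $p=\infty$ we have $\mathcal{E}_{\infty,\sigma}(\rho_n)=\max_{e\in E}\sigma(e)\rho_n(e)=\max_{e\in S_n}\sigma(e)=\sigma(n)\le\sigma(n)|S_n|$, the last inequality because $|S_n|\ge 1$ (the tree is proper, so every shell is non-empty). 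In every case, therefore, $\Mod_{p,\sigma}(\Gamma_\infty)\le\mathcal{E}_{p,\sigma}(\rho_n)\le\sigma(n)|S_n|$.

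Finally, since this inequality holds for every generation $n$, taking the infimum over $n$ on the right-hand side yields $\Mod_{p,\sigma}(\Gamma_\infty)\le\inf_n\sigma(n)|S_n|$. There is no genuine obstacle in this argument — it is a straightforward test-density estimate — but the two points that deserve care are confirming that $\rho_n$ is admissible against $\Gamma_\infty$ specifically (rather than the larger family $\Gamma$), and folding the $p=\infty$ case, where the energy is the maximum $\sigma(n)$ rather than a sum, into the same bound via $|S_n|\ge 1$.
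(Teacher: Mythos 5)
Your proof is correct and follows essentially the same route as the paper: take the indicator of the shell $S_n$ as a test density, note that every infinite descending path crosses $S_n$ so the density is admissible for $\Gamma_\infty$, bound the modulus by its energy, and take the infimum over $n$. Your explicit treatment of the $p=\infty$ case (via $|S_n|\ge 1$) and the remark that admissibility holds for $\Gamma_\infty$ but not for all of $\Gamma$ are welcome details that the paper leaves implicit.
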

Our goal to prove that $\Mod_{p, \sigma}(\Gamma_{\infty})$ is a natural extension of $\Mod_{p, \sigma}(\Gamma_{n})$ as $n$ become arbitrary large. For this, first, we prove the following lemmas.
\begin{lemma}\label{lemma:infinitelength}
Let $\rho\in\mathbb{R}^E_{\ge 0}$.  Then
\begin{equation*}
\lim_{n\to\infty}\ell_\rho(\Gamma_n) = \ell_\rho(\Gamma_\infty).
\end{equation*}
\end{lemma}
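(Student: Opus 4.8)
The plan is to establish the two inequalities $\lim_{n\to\infty}\ell_\rho(\Gamma_n)\le\ell_\rho(\Gamma_\infty)$ and $\lim_{n\to\infty}\ell_\rho(\Gamma_n)\ge\ell_\rho(\Gamma_\infty)$ separately, after first checking that the limit on the left even exists. The key structural tool is the sub-path relation: for $m\le n$ and $\gamma\in\Gamma_n$ there is a unique $\gamma'\preceq\gamma$ with $\gamma'\in\Gamma_m$, and since $\rho\ge 0$,
\begin{equation*}
\ell_\rho(\gamma')=\sum_{e\in\gamma'}\rho(e)\le\sum_{e\in\gamma}\rho(e)=\ell_\rho(\gamma).
\end{equation*}
Taking infima gives $\ell_\rho(\Gamma_m)\le\ell_\rho(\Gamma_n)$, so $n\mapsto\ell_\rho(\Gamma_n)$ is non-decreasing and $L:=\lim_{n\to\infty}\ell_\rho(\Gamma_n)\in[0,\infty]$ exists. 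Applying the same observation to an infinite path $\gamma\in\Gamma_\infty$ and its length-$n$ prefix shows $\ell_\rho(\Gamma_n)\le\ell_\rho(\gamma)$ for every such $\gamma$, hence $\ell_\rho(\Gamma_n)\le\ell_\rho(\Gamma_\infty)$ for all $n$ and therefore $L\le\ell_\rho(\Gamma_\infty)$. That is the easy half.

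For the reverse inequality I would argue as follows. If $L=+\infty$ there is nothing to prove, so assume $L<\infty$. Because the tree is locally finite, each ball $B_n$ is finite, so $\Gamma_n$ is a finite family and the infimum defining $\ell_\rho(\Gamma_n)$ is attained by some $\gamma_n\in\Gamma_n$ with $\ell_\rho(\gamma_n)=\ell_\rho(\Gamma_n)\le L$. Now run a K\"onig's-lemma/pigeonhole extraction on $(\gamma_n)_{n\ge 1}$: since $S_1$ is finite, infinitely many of the $\gamma_n$ share a common first edge $e^{(1)}$; among those, infinitely many share a common second edge $e^{(2)}\in c(e^{(1)})$ (finitely many children); iterating produces a descending path $\gamma^\ast=e^{(1)}e^{(2)}\cdots$, which is infinite and so lies in $\Gamma_\infty$, together with a nested chain of infinite index sets $I_1\supseteq I_2\supseteq\cdots$ with $I_m\subseteq\{n:n\ge m\}$ such that for every $n\in I_m$ the first $m$ edges of $\gamma_n$ are exactly $e^{(1)},\dots,e^{(m)}$.

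Fixing $m$ and choosing any $n\in I_m$ then gives
\begin{equation*}
\sum_{j=1}^{m}\rho\bigl(e^{(j)}\bigr)\le\ell_\rho(\gamma_n)=\ell_\rho(\Gamma_n)\le L,
\end{equation*}
and letting $m\to\infty$ (the partial sums are non-negative and bounded by $L$) yields $\ell_\rho(\gamma^\ast)=\sum_{j=1}^{\infty}\rho(e^{(j)})\le L$. Hence $\ell_\rho(\Gamma_\infty)\le\ell_\rho(\gamma^\ast)\le L$, which together with the easy half gives $L=\ell_\rho(\Gamma_\infty)$, as claimed.

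The main obstacle is the compactness step. One must check carefully that the diagonal extraction genuinely produces an infinite index set at every stage — this is exactly where local finiteness of the tree (including finiteness of $S_1$) is used — and that the uniform bound $\ell_\rho(\Gamma_n)\le L$, which comes from the monotonicity of $n\mapsto\ell_\rho(\Gamma_n)$ rather than from any single path, is what lets the estimate pass to the limiting path $\gamma^\ast$ through all of its finite prefixes. Everything else is routine bookkeeping with non-negative series and the relation $\preceq$.
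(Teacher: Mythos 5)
Your proof is correct and follows essentially the same route as the paper's: monotonicity of $n\mapsto\ell_\rho(\Gamma_n)$ for the easy inequality, then a K\"onig's-lemma extraction of a limiting infinite path from the minimizers $\gamma_n$ (using local finiteness) whose prefix sums are bounded by $\lim_n\ell_\rho(\Gamma_n)$. Your write-up is, if anything, slightly more careful than the paper's in making explicit that $\Gamma_n$ is finite so the infimum is attained, and in handling the case $L=+\infty$ separately.
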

\begin{proof}
Note that $\{\ell_\rho(\Gamma_n)\}$ is a monotone sequence, so either the limit exists as a finite number or the sequence diverges to $+\infty$, in which case we say that the limit is $+\infty$.
Let $A_0=\{\gamma_n\}$ be a sequence of paths with $\gamma_n\in\Gamma_n$ such that $\ell_\rho(\gamma_n)=\ell_\rho(\Gamma_n)$.  By the assumption of local finiteness, there must be an edge $e_1\in S_1$ that intersects infinitely many paths in $A_0$.  Let $A_1\subseteq A_0$ be the infinite subsequence produced by removing all paths that do not pass through $e_1$.  Again, some edge $e_2\in S_2$ must intersect infinitely many paths in $A_1$.  Let $A_2\subseteq A_1$ be the infinite subsequence produced by removing all paths that do not use this edge.  Repeating this procedure, we may produce an infinite sequence of edges $\{e_1,e_2,e_3,\ldots\}$ and a chain of infinite subsets $A_1\supseteq A_2\supseteq A_3\supseteq\cdots$ with the property that every path in $A_n$ begins with the subpath $e_1e_2\cdots e_n$ and every path in $A_n$ is a shortest-length path in some $\Gamma_m$ with $m\ge n$.
Now consider the infinite path $\gamma=e_1e_2e_3\cdots$.  For any $n\ge 1$, there exists an $m\ge n$ and a $\gamma'\in A_n\cap \Gamma_m$ with the property that $\ell_\rho(\gamma')=\ell_\rho(\Gamma_m)$.  So,
\begin{equation*}
\ell_\rho(\gamma\cap B_n) = \ell_\rho(\gamma'\cap B_n) \le \ell_\rho(\gamma') = \ell_\rho(\Gamma_m).
\end{equation*}
Since $\ell_\rho(\gamma\cap B_n)\to \ell_\rho(\gamma)$ as $n\to\infty$, this shows that
\begin{equation*}
\lim_{n\to\infty}\ell_\rho(\Gamma_n) \ge \ell_\rho(\gamma) \ge \ell_\rho(\Gamma_\infty).
\end{equation*}

On the other hand, if $\gamma\in\Gamma_\infty$ then for any $n\ge 1$,
\begin{equation*}
\ell_\rho(\Gamma_n) \le \ell_\rho(\gamma\cap B_n) \le \ell_\rho(\gamma).
\end{equation*}
Taking the infimum over $\gamma\in\Gamma_\infty$ establishes the opposite inequality.
\end{proof}
\begin{lemma}\label{lem:admissible-nesting}
If $1\le m\le n\le\infty$, then $\Adm(\Gamma_m)\subseteq\Adm(\Gamma_n)$.
\end{lemma}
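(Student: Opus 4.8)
The plan is to reduce admissibility for $\Gamma_n$ to admissibility for $\Gamma_m$ using the canonical sub-path map introduced before the lemma. First I would fix an arbitrary $\rho\in\Adm(\Gamma_m)$ and an arbitrary $\gamma\in\Gamma_n$. Since $1\le m\le n\le\infty$, the excerpt already records that there is a (unique) $\gamma'\in\Gamma_m$ with $\gamma'\preceq\gamma$; because a descending path starts at the root and satisfies $e_i=p(e_{i+1})$, this $\gamma'$ is precisely the initial segment $e_1e_2\cdots e_m$ of $\gamma$, so the edge set of $\gamma'$ is contained in the edge set of $\gamma$.

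Next I would exploit nonnegativity of the density. Since $\rho(e)\ge 0$ for every $e\in E$, summing over the smaller edge set can only decrease the total, so $\ell_\rho(\gamma)=\sum_{e\in\gamma}\rho(e)\ge\sum_{e\in\gamma'}\rho(e)=\ell_\rho(\gamma')\ge 1$, where the final inequality is exactly the statement that $\rho\in\Adm(\Gamma_m)$. As $\gamma\in\Gamma_n$ was arbitrary, this shows $\ell_\rho(\gamma)\ge 1$ for all $\gamma\in\Gamma_n$, i.e. $\rho\in\Adm(\Gamma_n)$, which is the desired inclusion. The degenerate cases are absorbed automatically: when $m=n$ one has $\gamma'=\gamma$, and when $n=\infty$ the path $\gamma$ is an infinite sequence whose first $m$ edges form $\gamma'$, so the same chain of inequalities applies verbatim.

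I do not expect a genuine obstacle here; the argument is essentially bookkeeping around the definitions. The only point deserving an explicit sentence is the claim that the length-$m$ truncation of a descending path is a bona fide member of $\Gamma_m$ whose edges lie inside the longer path — this is immediate from the definition of descending path, but it is worth stating since it is what makes the monotonicity go the ``right'' way even though $\Gamma_m$ and $\Gamma_n$ are disjoint families rather than nested ones. (One could alternatively phrase the whole thing through $\ell_\rho$: the inclusion is equivalent to $\ell_\rho(\Gamma_n)\ge\ell_\rho(\Gamma_m)$ whenever the latter is $\ge 1$, which is a special case of the monotonicity underlying Lemma~\ref{lemma:infinitelength}; but the direct sub-path argument above is shorter and self-contained.)
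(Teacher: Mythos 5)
Your argument is correct and is essentially identical to the paper's proof: both fix $\rho\in\Adm(\Gamma_m)$ and $\gamma\in\Gamma_n$, invoke the unique sub-path $\gamma'\in\Gamma_m$ with $\gamma'\preceq\gamma$, and use nonnegativity of $\rho$ to conclude $\ell_\rho(\gamma)\ge\ell_\rho(\gamma')\ge 1$. No gaps; your extra remarks on the degenerate cases are fine but not needed.
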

\begin{proof}
Let $\rho\in\Adm(\Gamma_m)$ and let $\gamma\in\Gamma_n$.  By the sub-path relationship, there is sub-path $\gamma' \in \Gamma_m$ such that $\gamma' \preceq \gamma$, that is, $\gamma$ is an extension of $\gamma'$.  Thus, since $\rho\ge 0$,
\begin{equation*}
\ell_{\rho}(\gamma) = \sum_{e\in\gamma}\rho(e) \ge \sum_{e\in\gamma'}\rho(e) = \ell_{\rho}(\gamma')
\ge 1.
\end{equation*}
Thus, $\rho\in\Adm(\Gamma_n)$.
\end{proof}

\begin{proof}[Proof of Theorem~\ref{thm:mod-limit}]
By the Lemma~\ref{lem:admissible-nesting}, the sequence $\{\Mod_{p,\sigma}(\Gamma_n)\}_{n=1}^\infty$ is a non-increasing sequence of non-negative numbers and, therefore, approaches a limit.  Now let $\epsilon>0$ be arbitrary and let $\rho\in\Adm(\Gamma_\infty)$ be an improper infinite tree and have the property that
\begin{equation*}
\mathcal{E}_{p, \sigma}(\rho) \le \Mod_{p, \sigma}(\Gamma_\infty)+\epsilon.
\end{equation*}
Since $\ell_\rho(\Gamma_\infty)\ge 1$ by assumption, Lemma~\ref{lemma:infinitelength} implies that $\ell_\rho(\Gamma_n)>0$ for sufficiently large $n$.  For these $n$, it is straightforward to check that $\rho/\ell_\rho(\Gamma_n)\in\Adm(\Gamma_n)$.  

It follows that for sufficiently large $n$,
\begin{equation*}
\Mod_{p, \sigma}(\Gamma_n) \le \mathcal{E}_{p, \sigma}\left(\frac{\rho}{\ell_\rho(\Gamma_n)}\right)=\sum_{e\in E} \sigma(e)\left(\frac{\rho(e)}{\ell_\rho(\Gamma_n)}\right )^p=\frac{\mathcal{E}_{p, \sigma}(\rho)}{\ell_\rho(\Gamma_{n})^p}.
\end{equation*}
Taking a limit as $n\to\infty$ shows that $\lim\limits_{n \to \infty}\Mod_{p,\sigma}(\Gamma_n) \le \Mod_{p,\sigma}(\Gamma_\infty) + \epsilon$.  Since $\epsilon>0$ was arbitrary, we have 
\begin{equation}\label{eq:lessthaninfinitemodulus}
    \lim\limits_{n \to \infty}\Mod_{p,\sigma}(\Gamma_n)\le\Mod_{p,\sigma}(\Gamma_\infty)
\end{equation}
On the other hand, Lemma~\ref{lem:admissible-nesting} implies the opposite inequality
\begin{equation} \label{eq:greaterthaninfinitemodulus}
    \Mod_{p,\sigma}(\Gamma_\infty) \le \lim\limits_{n \to \infty}\Mod_{p,\sigma}(\Gamma_n).
\end{equation}
Inequalities (\ref{eq:lessthaninfinitemodulus}) and (\ref{eq:greaterthaninfinitemodulus}) combined prove the theorem.
\end{proof}

\section{Computing modulus on radially symmetric tree}
Given a radially symmetric tree, it is natural to consider densities $\rho$ that share the tree's symmetry.  We shall call $\rho$ \emph{radially symmetric} if $\rho(e)=\rho(\gen(e))$---that is, if $\rho(e)$ depends only on the generation of $e$. The following lemma guarantees that it is sufficient to consider radially symmetric densities for the modulus on a radially symmetric tree. 
\begin{lemma}\label{lem:rad-sym}
Suppose $\rho\in\Adm(\Gamma_n)$ for some $1\le n\le\infty$ and that $\rho$ is not radially symmetric.  Then there exists a $\rho'\in\Adm(\Gamma_n)$ such that
\begin{equation*}
    \mathcal{E}_{p,\sigma}(\rho') < \mathcal{E}_{p,\sigma}(\rho).
\end{equation*}
\end{lemma}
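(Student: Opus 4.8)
The plan is to symmetrize $\rho$ shell by shell: let $\rho'$ be the radially symmetric density whose value on the shell $S_k$ is the average $\bar\rho_k:=|S_k|^{-1}\sum_{e\in S_k}\rho(e)$. Since $\rho$ is assumed not radially symmetric, it is non-constant on some shell $S_{k_0}$, so $\rho'\neq\rho$. I would carry this out for $1<p<\infty$ — the range relevant to Theorem~\ref{thm:mod-limit} and the series formula — since the strict decrease below genuinely uses strict convexity of $t\mapsto t^p$, so the endpoints $p\in\{1,\infty\}$ would need a separate comment. One should first dispose of the trivial case $\mathcal{E}_{p,\sigma}(\rho)=+\infty$: for any finite $m\le n$ the shell indicator $\mathbbm{1}_{S_m}$ is a radially symmetric admissible density of finite energy, so it already beats $\rho$; hence I may assume $\mathcal{E}_{p,\sigma}(\rho)<\infty$.

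The first real step is to verify $\rho'\in\Adm(\Gamma_n)$, and this is where radial symmetry enters. Identifying a path in $\Gamma_m$ with its generation-$m$ edge gives a bijection $\Gamma_m\leftrightarrow S_m$, and for each $k\le m$ and each $e\in S_k$ the number of generation-$m$ descendants of $e$ equals $|S_m|/|S_k|$, independent of $e$. Summing $\rho$ along paths and regrouping by the generation-$k$ edge therefore produces the averaging identity
\[
  \frac{1}{|S_m|}\sum_{\gamma\in\Gamma_m}\ell_\rho(\gamma)=\sum_{k=1}^m\bar\rho_k=\ell_{\rho'}(\gamma')\qquad\text{for every }\gamma'\in\Gamma_m .
\]
Since each $\ell_\rho(\gamma)\geq\ell_\rho(\Gamma_m)$, the left side is $\geq\ell_\rho(\Gamma_m)$, so $\ell_{\rho'}(\Gamma_m)\geq\ell_\rho(\Gamma_m)$ for every finite $m$. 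For $n<\infty$ this already gives $\ell_{\rho'}(\Gamma_n)\geq\ell_\rho(\Gamma_n)\geq1$; for $n=\infty$ I would let $m\to\infty$ and apply Lemma~\ref{lemma:infinitelength} to both densities, obtaining $\ell_{\rho'}(\Gamma_\infty)=\lim_m\ell_{\rho'}(\Gamma_m)\geq\lim_m\ell_\rho(\Gamma_m)=\ell_\rho(\Gamma_\infty)\geq1$. Either way $\rho'$ is admissible.

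For the energy, grouping by shells and using $\sigma\equiv\sigma_k$ on $S_k$ gives $\mathcal{E}_{p,\sigma}(\rho)=\sum_k\sigma_k\sum_{e\in S_k}\rho(e)^p$ and $\mathcal{E}_{p,\sigma}(\rho')=\sum_k\sigma_k|S_k|\bar\rho_k^{\,p}$. Jensen's inequality for $t\mapsto t^p$ on each shell gives $|S_k|\bar\rho_k^{\,p}\le\sum_{e\in S_k}\rho(e)^p$ term by term, with the inequality \emph{strict} at $k=k_0$ by strict convexity (and $\sigma_{k_0}>0$); since $\mathcal{E}_{p,\sigma}(\rho)<\infty$, summing over $k$ preserves the strict inequality, so $\mathcal{E}_{p,\sigma}(\rho')<\mathcal{E}_{p,\sigma}(\rho)$, as required. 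The only genuinely delicate point is the admissibility of $\rho'$ in the $n=\infty$ case: one cannot argue shell by shell there and must instead pass from the finite-level averaging identity to the limit, which is exactly what Lemma~\ref{lemma:infinitelength} supplies (an alternative is to integrate $\ell_\rho(\cdot)\geq1$ against the uniform branching measure on $\Gamma_\infty$ and apply Tonelli, but the truncation route avoids measure theory). Everything else is a routine Jensen estimate, and it is worth stating explicitly that radial symmetry is used both to make $\bar\rho_k$ a legitimate shell value and, more essentially, in the descendant count $|S_m|/|S_k|$ underlying the averaging identity.
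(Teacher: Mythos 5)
Your proof is correct, but it takes a genuinely different route from the paper. The paper works locally: it picks the earliest generation where $\rho$ fails to be symmetric, two edges $e,e'$ in that shell with $\rho(e)\ne\rho(e')$, forms $\tilde{\rho}$ by swapping the values on the (isomorphic) subtrees below $e$ and $e'$ --- which preserves admissibility and energy trivially, since path lengths are merely permuted --- and then takes the midpoint $\rho'=(\rho+\tilde{\rho})/2$, which is admissible by convexity of the admissible set and strictly cheaper by strict convexity of the energy. Note the paper's $\rho'$ is not itself radially symmetric; it is merely strictly better, which is all the lemma asserts. You instead perform the full shell-averaging symmetrization, which costs you two extra pieces of work the paper avoids: the admissibility of the averaged density is no longer automatic and requires your counting identity (using the bijection $\Gamma_m\leftrightarrow S_m$ and the descendant count $|S_m|/|S_k|$) together with Lemma~\ref{lemma:infinitelength} to handle $n=\infty$, and you must dispose of the case $\mathcal{E}_{p,\sigma}(\rho)=+\infty$ separately so that Jensen's strict inequality survives the infinite sum (a point the paper's own proof glosses over as well). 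In exchange, your argument buys more: it produces the radially symmetric competitor directly and shows shell-averaging never increases energy, which is really the fact invoked when the paper reduces the modulus computation to the symmetric optimization~\eqref{def:p-modulus-finite}. Your restriction to $1<p<\infty$ is also apt --- the paper's proof equally relies on strict convexity, so the lemma as stated is only meaningful in that range --- and flagging the endpoint cases $p\in\{1,\infty\}$ as needing separate treatment is a fair observation rather than a gap.
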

\begin{proof}
Let $m\le n$ be the earliest generation where $\rho$ fails to be radially symmetric and let $e,e'\in S_m$ be such that $\rho(e)\ne\rho(e')$.  Define $\tilde{\rho}$ from $\rho$ by swapping the values on $e$ and its children with the corresponding values on $e'$ and its children.  This density is still admissible and has the same energy as $\rho$.  Now define $\rho'=(\rho+\tilde{\rho})/2$.  As an average of two admissible densities, $\rho'$ is also admissible.  Moreover, by the strict convexity of the energy, it follows that
\begin{equation*}
\mathcal{E}_{p,\sigma}(\rho') <
\frac{1}{2}\mathcal{E}_{p,\sigma}(\tilde{\rho}) +
\frac{1}{2}\mathcal{E}_{p,\sigma}(\rho)
= \mathcal{E}_{p,\sigma}(\rho).
\end{equation*}
\end{proof}
In the case of a radially symmetric tree, the Lemma~\ref{lemma:infinitelength} is more easily established when the density $\rho$ is radially symmetric. For computing the $p$-modulus in a radially symmetric tree, we treat the cases $1<p<\infty$, $p=1$ and $p=\infty$ separately.  Specially, we use several facts about an exponent $p\in(1,\infty)$ and its corresponding H\"older conjugate exponent $q$:
\begin{equation*}
\frac{1}{p}+\frac{1}{q}=1
\end{equation*}

For $1<p<\infty$, the modulus (\ref{defn:p-modolus}) of a truncated family $\Gamma_n$ with $1\le n < \infty$ could be expressed as
\begin{equation}\label{def:p-modulus-finite}
\begin{split}
\text{minimize}\qquad&\sum_{k=1}^n|S_k|\sigma_k\rho_{n,k}^p,\\
\text{subject to}\qquad&\sum_{k=1}^n\rho_{n,k}\ge 1.
\end{split}
\end{equation}
where $\sigma_k$ and $\rho_{n,k}$ the values of $\sigma$ and $\rho_n$ on $S_k$ respectively on a radially symmetric tree. In this case, the problem (\ref{def:p-modulus-finite}) is a standard convex optimization problem and has a unique optimal density (see, \cite{AlbinModulusGraphs, Albin2015ModulusQuantities}). By using standard optimization techniques, we solve for the optimal density for ($\ref{def:p-modulus-finite}$), and the minimum value is 
\begin{equation*}
\Mod_{p,\sigma}(\Gamma_n) =
\left(\sum_{k=1}^n\left(\sigma_k|S_k|\right)^{-\frac{q}{p}}\right)^{-\frac{p}{q}}
\end{equation*}
where optimal density $\rho_{n,k}$ is given by
\begin{equation*}
\rho_{n,k} = 
\frac{\left(\sigma_k|S_k|\right)^{-\frac{q}{p}}}
{\sum\limits_{\ell=1}^n\left(\sigma_\ell|S_\ell|\right)^{-\frac{q}{p}}}.
\end{equation*}
By the Theorem~\ref{thm:mod-limit} and taking the limit, we find a formula for the modulus of the infinite tree:
\begin{equation}\label{eq:mod-formula-inf-family}
\Mod_{p,\sigma}(\Gamma_\infty)
= \left(\sum_{k=1}^\infty\left(\sigma_k|S_k|\right)^{-\frac{q}{p}}\right)^{-\frac{p}{q}},
\end{equation}
The formula (\ref{eq:mod-formula-inf-family}) makes modulus much easier for calculation in terms of convergence and divergence of series. If the quantities $\sigma_k|S_k|$ grow sufficiently fast, then the series converges and hence $\Mod_{p,\sigma}(\Gamma_\infty)>0$. In this case, the optimal density exists. Otherwise, $\Mod_{p,\sigma}(\Gamma_\infty)=0$ and there is no optimal density. Therefore, the existence or non-existence of an optimal density is closely related to the convergence or divergence of the infinite sum in (\ref{eq:mod-formula-inf-family}). In particular, $p$-modulus can be expressed as an infinite sum
\begin{equation}\label{eq:2-mod-formula}
    \Mod_{2,\sigma}(\Gamma_\infty)
= \left(\sum_{k=1}^\infty\frac{1}{\sigma_k|S_k|}\right)^{-1}
\end{equation}

\begin{theorem}\label{thm:mod-opt-den} 
For $1 <p < \infty$, an optimal density for $\Mod_{p,\sigma}(\Gamma_\infty)$ exists if and only if $\Mod_{p,\sigma}(\Gamma_\infty)>0$.
\end{theorem}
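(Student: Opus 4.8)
The plan is to prove the two implications separately, using the closed-form series (\ref{eq:mod-formula-inf-family}) for the harder direction.

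First I would dispose of the implication that the existence of an extremal density forces $\Mod_{p,\sigma}(\Gamma_\infty)>0$; I will prove its contrapositive. Suppose $\rho^{*}\in\Adm(\Gamma_\infty)$ is extremal. If $\rho^{*}\equiv 0$, then $\ell_{\rho^{*}}(\gamma)=0<1$ for every $\gamma\in\Gamma_\infty$, contradicting admissibility; hence $\rho^{*}(e_{0})>0$ for some edge $e_{0}$, and since $\sigma(e_{0})>0$,
\[
\Mod_{p,\sigma}(\Gamma_\infty)=\mathcal{E}_{p,\sigma}(\rho^{*})\ge \sigma(e_{0})\rho^{*}(e_{0})^{p}>0.
\]
Equivalently, if $\Mod_{p,\sigma}(\Gamma_\infty)=0$ then an extremal density would have zero energy, forcing $\rho^{*}\equiv0$, which is not admissible. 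This step uses neither radial symmetry nor the series formula.

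For the converse, assume $\Mod_{p,\sigma}(\Gamma_\infty)>0$ and set $S:=\sum_{k=1}^{\infty}(\sigma_{k}|S_{k}|)^{-q/p}$. By (\ref{eq:mod-formula-inf-family}), positivity of the modulus is equivalent to $S<\infty$. I would then exhibit an explicit extremal density, namely the pointwise limit as $n\to\infty$ of the finite optimizers $\rho_{n,k}$:
\[
\rho^{*}(e):=\frac{\bigl(\sigma_{\gen(e)}|S_{\gen(e)}|\bigr)^{-q/p}}{S}.
\]
Two things need checking. For admissibility, note that each $\gamma\in\Gamma_\infty$ meets every shell $S_{k}$ in exactly one edge, so $\ell_{\rho^{*}}(\gamma)=\sum_{k=1}^{\infty}\rho^{*}_{k}=S/S=1$, whence $\rho^{*}\in\Adm(\Gamma_\infty)$. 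For optimality, a direct computation — rearranging a series of nonnegative terms and using the conjugacy identities $1-q=-q/p$ and $1-p=-p/q$ — yields
\[
\mathcal{E}_{p,\sigma}(\rho^{*})=\sum_{k=1}^{\infty}|S_{k}|\sigma_{k}(\rho^{*}_{k})^{p}=\frac{1}{S^{p}}\sum_{k=1}^{\infty}(\sigma_{k}|S_{k}|)^{-q/p}=S^{-p/q}=\Mod_{p,\sigma}(\Gamma_\infty),
\]
so $\rho^{*}$ attains the modulus.

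I expect the only delicate point to be bookkeeping about which paths the constraint $\Adm(\Gamma_\infty)$ actually controls: admissibility for $\Gamma_\infty$ constrains only the infinite descending paths, which is precisely why $\sum_{k}\rho^{*}_{k}=1$ suffices even though $\rho^{*}\notin\Adm(\Gamma_{n})$ for any finite $n$. If one instead wanted to treat a general proper infinite tree without the closed form, the natural argument would take a minimizing sequence, use reflexivity of the weighted $\ell^{p}$ space to extract a weak limit, and invoke weak lower semicontinuity of $\mathcal{E}_{p,\sigma}$; there the genuine obstacle is that $\rho\mapsto\ell_{\rho}(\gamma)$ is an infinite sum and need not be weakly continuous, so transferring the admissibility constraint to the weak limit requires an extra truncation argument — avoided here by working with the radially symmetric series directly.
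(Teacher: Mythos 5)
Your proposal is correct and follows essentially the same route as the paper: the forward direction by noting a zero-modulus extremal would have to vanish identically and hence fail admissibility, and the converse by exhibiting the explicit radially symmetric density $\rho^{*}_k=(\sigma_k|S_k|)^{-q/p}/S$, checking $\ell_{\rho^{*}}(\gamma)=1$ on every infinite descending path, and computing $\mathcal{E}_{p,\sigma}(\rho^{*})=S^{-p/q}=\Mod_{p,\sigma}(\Gamma_\infty)$ via the series formula~\eqref{eq:mod-formula-inf-family}. The only differences are cosmetic: you spell out the easy implication a bit more than the paper does, and your closing remarks about weak limits concern a generalization the paper does not attempt.
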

\begin{proof}
If the modulus is zero, then no optimal density can exist.  Suppose, on the other hand, that the modulus is positive. Then the series
\begin{equation*}
\sum_{k=1}^\infty\left(\sigma_k|S_k|\right)^{-\frac{q}{p}}
\end{equation*}
converges.

Define a radially symmetric density $\rho$ so that on $S_k$, it takes the value
\begin{equation}\label{eq:opt-symm-rho}
\rho_k :=
\frac{\left(\sigma_k|S_k|\right)^{-\frac{q}{p}}}
{\sum\limits_{\ell=1}^\infty\left(\sigma_\ell|S_\ell|\right)^{-\frac{q}{p}}}.
\end{equation}
Note that $\rho$ is admissible, since $\sum\limits_{k=1}^\infty\rho_k=1$.  Moreover,
\begin{equation*}\label{eq:symm-mod-formula}
\begin{split}
\mathcal{E}_{p,\sigma}(\rho) &=
\sum_{k=1}^\infty
\frac{\sigma_k|S_k|\left(\sigma_k|S_k|\right)^{-q}}
{\left(\sum\limits_{\ell=1}^\infty\left(\sigma_\ell|S_\ell|\right)^{-\frac{q}{p}}\right)^p}
=
\frac{\sum\limits_{k=1}^\infty\left(\sigma_k|S_k|\right)^{-\frac{q}{p}}}
{\left(\sum\limits_{\ell=1}^\infty\left(\sigma_\ell|S_\ell|\right)^{-\frac{q}{p}}\right)^p}\\
&= \left(\sum_{k=1}^\infty\left(\sigma_k|S_k|\right)^{-\frac{q}{p}}\right)^{-\frac{p}{q}} \\
& = \Mod_{p,\sigma}(\Gamma_\infty)
\end{split}
\end{equation*}
showing that $\rho$ is optimal.
\end{proof}
By the Theorem \ref{thm:mod-limit}, the effective conductance on the proper infinite tree is $\Mod_{2,\sigma}(\Gamma_\infty)$. Hence, the Theorem~\ref{thm:mod-opt-den} has immediate consequences that relate the $2$-modulus with the nature of a simple random walk in an infinite tree (see,\cite{LyonsProbabilityNetworks, Vatamanelu2010TheNetworkb}). 
   A random walk is transient or recurrent depending on how the number of children in each generation grows. Specially, a random walk in a radially symmetric tree whose edges each have more than one child is always transient.
 \begin{corollary}\label{corollary:transient}
 A random walk in an infinite tree $G$ is transient if and only if $\Mod_{2,1}(\Gamma_\infty)>0$.
 \end{corollary}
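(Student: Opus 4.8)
The plan is to connect the probabilistic characterization of transience/recurrence for the simple random walk on an infinite locally finite tree with the modulus formula developed above, using $p=2$ as the bridge. Recall the classical fact (Lyons--Peres, Doyle--Snell) that a reversible random walk on a network is \emph{transient} if and only if the effective conductance from the root to infinity is strictly positive; and on an infinite tree with unit edge weights, this effective conductance is exactly the limit of the effective conductances of the truncated finite trees. So the corollary will follow once I identify that limiting effective conductance with $\Mod_{2,1}(\Gamma_\infty)$.

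First I would recall, as cited in the text just before the corollary, that for a \emph{finite} rooted tree the $2$-modulus $\Mod_{2,\sigma}(\Gamma_n)$ of the family of descending paths from the root equals the effective conductance between the root and the $n$-th shell (with the shell shorted to a single terminal). This is the known finite-graph identity between $2$-modulus of a connecting family and effective conductance. Second, I would invoke Theorem~\ref{thm:mod-limit} with $p=2$: $\lim_{n\to\infty}\Mod_{2,\sigma}(\Gamma_n)=\Mod_{2,\sigma}(\Gamma_\infty)$. Combining these two facts, $\Mod_{2,1}(\Gamma_\infty)$ is precisely the effective conductance from the root to infinity in the unit-weighted tree $G$. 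Third, I would apply the standard transience criterion: the simple random walk on $G$ is transient iff this conductance-to-infinity is positive, i.e.\ iff $\Mod_{2,1}(\Gamma_\infty)>0$. Alternatively, and perhaps more self-containedly, one can avoid quoting the finite-graph modulus/conductance identity and instead argue directly from formula~\eqref{eq:2-mod-formula}: for a radially symmetric unit-weighted tree, the series criterion $\sum_k 1/|S_k| < \infty$ for $\Mod_{2,1}(\Gamma_\infty)>0$ is exactly the Nash-Williams / flow criterion for transience of the random walk on that tree (the $|S_k|$ parallel edges at generation $k$ have total conductance $|S_k|$, so the effective resistance to infinity is $\sum_k 1/|S_k|$).

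I expect the main obstacle to be a matter of scope rather than deep difficulty: the statement of Corollary~\ref{corollary:transient} is phrased for ``an infinite tree $G$'' without the radial-symmetry hypothesis, whereas formula~\eqref{eq:2-mod-formula} was derived only for radially symmetric trees. So the clean route is the first one above — use the general identity ``$2$-modulus of the connecting family $=$ effective conductance'' on each truncation $G_n$ (valid for arbitrary finite trees), then pass to the limit via Theorem~\ref{thm:mod-limit}, then cite the transience criterion. The only care needed is to make sure the limit of finite effective conductances really is the ``effective conductance to infinity'' used in the transience theorem — but this is exactly the definition of the latter (a decreasing limit of the conductances of exhausting finite subnetworks), so no work is required there. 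If the author intends the corollary only for radially symmetric trees, then the series argument is an equally valid and more explicit alternative, with the Nash--Williams criterion doing the final step.

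In short, the proof is a two-line chaining: $\Mod_{2,1}(\Gamma_\infty) = \lim_{n\to\infty}\Mod_{2,1}(\Gamma_n) = \lim_{n\to\infty}\Ceff(o \leftrightarrow S_n) = \Ceff(o\leftrightarrow\infty)$, and then $\Ceff(o\leftrightarrow\infty)>0 \iff$ the walk is transient. The first equality is Theorem~\ref{thm:mod-limit}, the second is the finite-graph modulus--conductance duality, the third is the definition of conductance to infinity, and the final equivalence is the classical transience criterion for reversible chains.
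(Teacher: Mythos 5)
Your proposal is correct and follows essentially the same route the paper takes: the paper treats this corollary as an immediate consequence of the chain $\Mod_{2,1}(\Gamma_\infty)=\lim_n\Mod_{2,1}(\Gamma_n)$ (Theorem~\ref{thm:mod-limit}), the finite-tree identity between $2$-modulus of descending paths and effective conductance (cited from \cite{AlbinModulusGraphs}), and the classical criterion that transience is equivalent to positive effective conductance to infinity (\cite{Doyle2000RandomNetworks, LyonsProbabilityNetworks}). Your alternative series/Nash--Williams argument is a fine explicit substitute in the radially symmetric case, but the main chaining you give is exactly the paper's intended justification.
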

  \begin{corollary}
 The random walk on an unweighted infinite tree is transient if and only if $\sum\limits_{k=1}^\infty\frac{1}{|S_k|} < \infty$.
 \end{corollary}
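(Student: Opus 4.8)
The plan is to chain together the two facts already established in this section: Corollary~\ref{corollary:transient}, which identifies transience of the simple random walk with positivity of $\Mod_{2,1}(\Gamma_\infty)$, and the series representation~\eqref{eq:2-mod-formula} of the $2$-modulus on a radially symmetric tree. Specializing~\eqref{eq:2-mod-formula} to the unweighted case $\sigma\equiv1$, so that $\sigma_k=1$ for every $k$, gives
\begin{equation*}
\Mod_{2,1}(\Gamma_\infty)=\left(\sum_{k=1}^\infty\frac{1}{|S_k|}\right)^{-1},
\end{equation*}
with the convention --- introduced after~\eqref{eq:mod-formula-inf-family} --- that the right-hand side means $0$ when the series diverges.

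From here the equivalence is immediate. If $\sum_{k=1}^\infty|S_k|^{-1}<\infty$, then this sum is a strictly positive real number (each term is positive and the tree, being proper and infinite, has a nonempty first shell), so its reciprocal is strictly positive and $\Mod_{2,1}(\Gamma_\infty)>0$; if the series diverges, then $\Mod_{2,1}(\Gamma_\infty)=0$. Hence $\Mod_{2,1}(\Gamma_\infty)>0$ if and only if $\sum_{k=1}^\infty|S_k|^{-1}<\infty$, and combining this with Corollary~\ref{corollary:transient} completes the argument.

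To make the ``value $0$ on divergence'' step fully rigorous rather than merely conventional, I would invoke Theorem~\ref{thm:mod-limit}: the truncated values are $\Mod_{2,1}(\Gamma_n)=\big(\sum_{k=1}^n|S_k|^{-1}\big)^{-1}$, which tend to $0$ precisely when $\sum_{k=1}^\infty|S_k|^{-1}=\infty$, and the theorem identifies this limit with $\Mod_{2,1}(\Gamma_\infty)$. There is essentially no obstacle in this proof; the only point deserving care is that~\eqref{eq:2-mod-formula} rests on the standing radial-symmetry hypothesis of this section, so the corollary should be read for radially symmetric unweighted trees, consistent with the setting in which Corollary~\ref{corollary:transient} was stated.
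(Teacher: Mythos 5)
Your proposal is correct and follows the same route the paper intends: the corollary is an immediate consequence of Corollary~\ref{corollary:transient} combined with the series formula~\eqref{eq:2-mod-formula} specialized to $\sigma\equiv 1$, with divergence of the series corresponding to $\Mod_{2,1}(\Gamma_\infty)=0$ via Theorem~\ref{thm:mod-limit}. Your remark that the statement should be read under the radial-symmetry hypothesis on which~\eqref{eq:2-mod-formula} depends is a fair and accurate caveat.
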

By~\eqref{defn:p-modolus}, for $p=1$,  the \emph{$1$-modulus} of a family of descending paths in a radially symmetric tree is defined as 
\begin{equation} \label{def:1-modulus}
\begin{aligned}
\minimize  \quad & \sum_{e\in E}\sigma(e)\rho(e)\\ 
\sub \quad & \ell_{\rho}(\gamma) \ge 1 \qquad  \forall\, \gamma \in \Gamma_\infty, \\
& \rho(e)\ge 0 \qquad  \forall \, e \in E
\end{aligned}
\end{equation}
and for $p=\infty$, the \emph{$\infty$-modulus} of a family of descending paths in a proper infinite tree defined as 
\begin{equation*}
\begin{aligned}
\minimize \quad  & \sup_{e \in E} \sigma(e)\rho(e)\\ 
\sub \quad & \ell_{\rho}(\gamma) \ge 1 \qquad  \forall\, \gamma \in \Gamma_\infty, \\
& \rho(e)\ge 0 \qquad  \forall \, e \in E.
\end{aligned}
\end{equation*}
In both cases of $p$, $\rho$-energy is not strictly convex, meaning that an optimal density (if it exists) need not be unique. Before establishing a result of $1$-modulus and $\infty$-modulus related to other physical quantities analogous to finite setting (see, \cite{Albin2015ModulusQuantities}), we derive a special dual formulation for $p$-modulus, and it helps to establish lower bounds on modulus by establishing a dual problem.  Applying classical Lagrangian duality to the $p$-modulus problem is difficult in part because there are uncountably many inequality constraints. In a sense, the following construction replaces the classical dual by providing a family of lower bounds for $p$-modulus.  The largest such lower bound can be characterized as a type of dual problem, and, at least in the case of radially symmetric trees, the lower bound is exact.

We begin by defining
\begin{equation*}
\Lambda := \left\{\eta\in\mathbb{R}^E_{\ge 0}:\sum_{e\in S_1}\eta(e)=1,\;\sum_{e'\in c(e)}\eta(e')=\eta(e) \quad \forall e\in E\right\}.
\end{equation*}\label{eq:lambda}
We observe that it is not hard to show the set $\Lambda$ is a convex subset of $\mathbb{R}_{\ge 0}^{E}$. The following lemma is an immediate consequence of the set $\Lambda$ that if $\lambda \in \Lambda$, then it assigns unit mass to all shells in the tree.

\begin{lemma} \label{lem:eta-sum-1}
Let $\eta\in\Lambda$ and let $n\ge 1$.Then
\begin{equation*}
\eta(S_n) = \sum_{e\in S_n}\eta(e) = 1.
\end{equation*}
\end{lemma}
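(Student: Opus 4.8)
The plan is to prove the claim by induction on $n$, using the defining recursion of $\Lambda$ to pass mass from one shell to the next. The base case $n=1$ is immediate: by definition, any $\eta\in\Lambda$ satisfies $\sum_{e\in S_1}\eta(e)=1$, so $\eta(S_1)=1$.

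For the inductive step, I would assume $\eta(S_n)=1$ for some $n\ge 1$ and show $\eta(S_{n+1})=1$. The key observation is that in a proper (locally finite) tree, every edge $e'\in S_{n+1}$ has a unique parent $p(e')\in S_n$, and conversely the children of the edges in $S_n$ partition $S_{n+1}$; that is, $S_{n+1}=\bigsqcup_{e\in S_n}c(e)$ is a disjoint union. Therefore
\begin{equation*}
\eta(S_{n+1})=\sum_{e'\in S_{n+1}}\eta(e')=\sum_{e\in S_n}\sum_{e'\in c(e)}\eta(e')=\sum_{e\in S_n}\eta(e)=\eta(S_n)=1,
\end{equation*}
where the third equality uses the defining property $\sum_{e'\in c(e)}\eta(e')=\eta(e)$ of $\Lambda$ applied to each $e\in S_n$, and the last equality is the inductive hypothesis. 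This completes the induction.

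The only point requiring a little care — and the closest thing to an obstacle — is justifying the rearrangement of the sum $\sum_{e'\in S_{n+1}}\eta(e')$ into the iterated sum $\sum_{e\in S_n}\sum_{e'\in c(e)}\eta(e')$. Since the tree is locally finite, each $c(e)$ is a finite set, and $S_n$ itself is finite (it sits inside the ball $B_n$, which is finite in a locally finite tree), so $S_{n+1}$ is a finite set and the rearrangement is just a finite reindexing over the partition $S_{n+1}=\bigsqcup_{e\in S_n}c(e)$; no convergence issues arise. I would also note in passing that $C(e)\ge 1$ for all $e$ guarantees every edge in $S_n$ actually has at least one child, so $S_{n+1}$ is nonempty and the shells are well-defined at every generation, though this is not strictly needed for the equality $\eta(S_{n+1})=1$ itself.
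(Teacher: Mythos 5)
Your proof is correct and is exactly the argument the paper has in mind: the paper states this lemma without proof as an ``immediate consequence'' of the definition of $\Lambda$, and your induction on shells, using the disjoint decomposition $S_{n+1}=\bigcup_{e\in S_n}c(e)$ together with the mass-conservation condition $\sum_{e'\in c(e)}\eta(e')=\eta(e)$, is the standard way to make that immediacy precise. Your remarks on local finiteness (finite shells, so the reindexing is a finite rearrangement) are apt and complete the justification.
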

\begin{lemma}\label{lem:rho-eta-Bn}
Let $\rho\in\mathbb{R}^E_{\ge 0}$ and let $\eta\in\Lambda$.  Then, for any $n\ge 1$
\begin{equation*}
\sum_{e\in B_n}\rho(e)\eta(e) = \sum_{e\in S_n}\ell_\rho(\gamma_e)\eta(e),
\end{equation*}
where $\gamma_e$ is the path descending from the root and terminating after passing through the edge $e$.
\end{lemma}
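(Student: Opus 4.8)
The plan is to prove the identity by induction on $n$, exploiting the defining flow property of $\eta \in \Lambda$, namely $\sum_{e' \in c(e)} \eta(e') = \eta(e)$, to rewrite the contribution of each shell in terms of the next one. Write $L_n := \sum_{e \in B_n} \rho(e)\eta(e)$ for the left-hand side and $R_n := \sum_{e \in S_n} \ell_\rho(\gamma_e)\eta(e)$ for the right-hand side, where $\gamma_e$ is the descending path from the root through $e$. For the base case $n=1$, every edge $e \in S_1$ has $\gamma_e$ consisting of that single edge, so $\ell_\rho(\gamma_e) = \rho(e)$, and $B_1 = S_1$, giving $L_1 = \sum_{e \in S_1}\rho(e)\eta(e) = R_1$ immediately.

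For the inductive step, suppose $L_n = R_n$. The key observation is that $\ell_\rho(\gamma_{e'}) = \ell_\rho(\gamma_{p(e')}) + \rho(e')$ for any edge $e'$ with parent $p(e')$, since $\gamma_{e'}$ is obtained from $\gamma_{p(e')}$ by appending the edge $e'$. Every edge in $S_{n+1}$ has a unique parent in $S_n$, and the children of distinct edges in $S_n$ are disjoint and exhaust $S_{n+1}$ (by the tree structure and $C(e) \ge 1$). Therefore
\begin{equation*}
R_{n+1} = \sum_{e \in S_n}\sum_{e' \in c(e)} \ell_\rho(\gamma_{e'})\eta(e')
= \sum_{e \in S_n}\sum_{e' \in c(e)} \bigl(\ell_\rho(\gamma_e) + \rho(e')\bigr)\eta(e').
\end{equation*}
Splitting the inner sum, the first piece is $\sum_{e \in S_n}\ell_\rho(\gamma_e)\sum_{e' \in c(e)}\eta(e') = \sum_{e \in S_n}\ell_\rho(\gamma_e)\eta(e) = R_n$ by the flow property, and the second piece is $\sum_{e \in S_n}\sum_{e' \in c(e)}\rho(e')\eta(e') = \sum_{e' \in S_{n+1}}\rho(e')\eta(e')$. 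Hence $R_{n+1} = R_n + \sum_{e' \in S_{n+1}}\rho(e')\eta(e') = L_n + \sum_{e' \in S_{n+1}}\rho(e')\eta(e') = L_{n+1}$, completing the induction.

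I do not expect any serious obstacle here; the lemma is essentially a discrete summation-by-parts (or telescoping) identity dressed up in tree notation. The only point requiring a little care is bookkeeping the partition of $S_{n+1}$ into the children-sets $c(e)$ over $e \in S_n$ and confirming that these are disjoint and cover $S_{n+1}$ — this is where local finiteness and the proper-tree hypothesis $C(e) \ge 1$ are used, and it is exactly what lets the double sum collapse cleanly. Everything else is a direct application of the two defining conditions of $\Lambda$ and the additivity of $\ell_\rho$ along descending paths.
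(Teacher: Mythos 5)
Your proof is correct and follows essentially the same route as the paper: induction on $n$, using the additivity $\ell_\rho(\gamma_{e'})=\ell_\rho(\gamma_{p(e')})+\rho(e')$, the flow condition $\sum_{e'\in c(e)}\eta(e')=\eta(e)$, and the partition of $S_{n+1}$ into the children sets of $S_n$; the only cosmetic difference is that you manipulate the right-hand side toward the left, while the paper runs the same chain of identities in the opposite direction.
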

\begin{proof}
The equality is trivially true when $n=1$.  Suppose it is true for some $n\ge 1$, then
\begin{equation*}
\begin{split}
\sum_{e\in B_{n+1}}\rho(e)\eta(e) &= \sum_{e\in B_n}\rho(e)\eta(e) + \sum_{e\in S_{n+1}}\rho(e)\eta(e)\\
&= \sum_{e\in S_n}\ell_\rho(\gamma_e)\eta(e) + \sum_{e\in S_n}\sum_{e'\in c(e)}\rho(e')\eta(e')\\
&= \sum_{e\in S_n}\ell_\rho(\gamma_e)\sum_{e'\in c(e)}\eta(e') + \sum_{e\in S_n}\sum_{e'\in c(e)}\rho(e')\eta(e')\\
&= \sum_{e\in S_n}\sum_{e'\in c(e)}\left(\ell_\rho(\gamma_e)+\rho(e')\right)\eta(e')\\
&= \sum_{e\in S_n}\sum_{e'\in c(e)}\ell_\rho(\gamma_{e'})\eta(e')\\
&= \sum_{e\in S_{n+1}}\ell_\rho(\gamma_{e})\eta(e).
\end{split}
\end{equation*}
\end{proof}
The following theorem establishes a connection between the modulus and the set $\Lambda$ by characterizing the admissible set $\Adm(\Lambda)$.
\begin{theorem}\label{thm:rho-eta-inequality}
Let $\rho\in\mathbb{R}^E_{\ge 0}$.  Then $\rho\in\Adm(\Gamma_\infty)$ if and only if
\begin{equation}\label{eq:rho-eta-inequality}
\sum_{e\in E}\rho(e)\eta(e)\ge 1\qquad\text{for all }\eta\in\Lambda.
\end{equation}
\end{theorem}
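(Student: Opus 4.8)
The plan is to prove the two directions separately, with the $(\Leftarrow)$ direction being the substantive one. For the forward direction $(\Rightarrow)$: assume $\rho\in\Adm(\Gamma_\infty)$, fix $\eta\in\Lambda$, and I would use Lemma~\ref{lem:rho-eta-Bn} to write $\sum_{e\in B_n}\rho(e)\eta(e)=\sum_{e\in S_n}\ell_\rho(\gamma_e)\eta(e)$. Here $\gamma_e$ is the finite descending path through $e$; although it lies in $\Gamma_n$ rather than $\Gamma_\infty$, I would argue that $\ell_\rho(\gamma_e)\ge \ell_\rho(\gamma)$ for any infinite extension $\gamma\succeq\gamma_e$ only fails if $\rho$ has mass beyond generation $n$, so instead the cleaner route is: since $C(e)\ge 1$ everywhere, $\gamma_e$ extends to some infinite path $\hat\gamma_e\in\Gamma_\infty$ with $\ell_\rho(\gamma_e)\le\ell_\rho(\hat\gamma_e)$, but that is the wrong inequality direction. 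The right approach is to pass to the limit: by Lemma~\ref{lemma:infinitelength} and monotonicity, for any $\epsilon>0$ and $n$ large, every $\gamma_e$ with $e\in S_n$ satisfies $\ell_\rho(\gamma_e)\ge\ell_\rho(\Gamma_n)\ge\ell_\rho(\Gamma_\infty)-\epsilon\ge 1-\epsilon$ is false in general — rather, I should observe $\ell_\rho(\gamma_e)$ need not be bounded below by $1$. So the correct argument: $\ell_\rho(\gamma_e)\ge\ell_\rho(\gamma_{e}')$ is not available either. Let me restart this direction: the honest statement is $\sum_{e\in S_n}\ell_\rho(\gamma_e)\eta(e)$, and since each $\gamma_e$ is a sub-path of some infinite path, and $\rho$ is admissible means $\ell_\rho$ of the \emph{full} infinite path is $\ge 1$; taking $n\to\infty$, $\ell_\rho(\gamma_e)$ with $e$ ranging over $S_n$ and using Lemma~\ref{lem:eta-sum-1} ($\eta(S_n)=1$), together with $\liminf$ of the shortest such $\rho$-lengths being $\ell_\rho(\Gamma_\infty)\ge 1$ via Lemma~\ref{lemma:infinitelength}, gives $\sum_{e\in B_n}\rho(e)\eta(e)\ge \min_{e\in S_n}\ell_\rho(\gamma_e)\to \ell_\rho(\Gamma_\infty)\ge 1$; since $\sum_{e\in B_n}\rho(e)\eta(e)$ is nondecreasing in $n$ and bounded above by $\sum_{e\in E}\rho(e)\eta(e)$, the latter is $\ge 1$.

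For the reverse direction $(\Leftarrow)$: suppose $\rho\notin\Adm(\Gamma_\infty)$; I want to produce $\eta\in\Lambda$ violating \eqref{eq:rho-eta-inequality}. Non-admissibility means there is $\gamma^*=e_1e_2\cdots\in\Gamma_\infty$ with $\ell_\rho(\gamma^*)=\sum_{i}\rho(e_i)<1$. The natural candidate is the $\eta$ supported on $\gamma^*$: set $\eta(e_i)=1$ for all $i$ along $\gamma^*$ and $\eta(e)=0$ otherwise. I would check this lies in $\Lambda$: $\sum_{e\in S_1}\eta(e)=\eta(e_1)=1$, and for each $e$, $\sum_{e'\in c(e)}\eta(e')=\eta(e)$ because if $e=e_i$ lies on $\gamma^*$ then exactly one child $e_{i+1}$ carries mass $1$ (using $C(e)\ge 1$ so a child exists), and if $e$ is off $\gamma^*$ both sides are $0$. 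Then $\sum_{e\in E}\rho(e)\eta(e)=\sum_i\rho(e_i)=\ell_\rho(\gamma^*)<1$, contradicting \eqref{eq:rho-eta-inequality}. Hence \eqref{eq:rho-eta-inequality} forces $\rho\in\Adm(\Gamma_\infty)$.

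I expect the main obstacle to be the forward direction, specifically handling the finite paths $\gamma_e$ produced by Lemma~\ref{lem:rho-eta-Bn} versus the infinite paths in $\Gamma_\infty$ that define admissibility. The clean fix is to not bound each $\ell_\rho(\gamma_e)$ individually but to use that $\sum_{e\in S_n}\ell_\rho(\gamma_e)\eta(e)\ge\bigl(\inf_{e\in S_n}\ell_\rho(\gamma_e)\bigr)\eta(S_n)=\inf_{e\in S_n}\ell_\rho(\gamma_e)$ by Lemma~\ref{lem:eta-sum-1}, recognize $\inf_{e\in S_n}\ell_\rho(\gamma_e)$ as essentially $\ell_\rho(\Gamma_n)$ (each $\gamma_e$, $e\in S_n$, is exactly a path in $\Gamma_n$ and conversely), and invoke Lemma~\ref{lemma:infinitelength} to conclude $\lim_n\ell_\rho(\Gamma_n)=\ell_\rho(\Gamma_\infty)\ge 1$; monotonicity of the partial sums $\sum_{e\in B_n}\rho(e)\eta(e)$ then upgrades this to $\sum_{e\in E}\rho(e)\eta(e)\ge 1$. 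A minor secondary point is confirming $\Lambda\ne\emptyset$ and that the "off-path" verification of the $\Lambda$ constraints in the reverse direction is complete, but these are routine given local finiteness and $C(e)\ge 1$.
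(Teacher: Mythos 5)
Your argument is correct and matches the paper's proof essentially step for step: the forward direction combines Lemma~\ref{lem:rho-eta-Bn}, Lemma~\ref{lem:eta-sum-1} (so the shell sum dominates $\ell_\rho(\Gamma_n)$), and Lemma~\ref{lemma:infinitelength} to pass to the limit, and the reverse direction is the same contrapositive with $\eta=\mathbf{1}_{\gamma^*}$ (your explicit check that this indicator lies in $\Lambda$ is a detail the paper leaves implicit). The exploratory false starts in your forward direction are resolved by your final "clean fix," which is exactly the paper's argument, so nothing further is needed.
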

\begin{proof}
Suppose $\rho\in\Adm(\Gamma_\infty)$.  Then for any $\eta\in\Lambda$, Lemma~\ref{lem:rho-eta-Bn} implies that
\begin{equation*}
\sum_{e\in E}\rho(e)\eta(e) = \lim_{n\to\infty}\sum_{e\in B_n}\rho(e)\eta(e)
=\lim_{n\to\infty}\sum_{e\in S_n}\ell_\rho(\gamma_e)\eta(e).
\end{equation*}
Since $\gamma_e\in\Gamma_n$ for $e\in S_n$, it follows that $\ell_\rho(\gamma_e)\ge\ell_\rho(\Gamma_n)$.  Thus, by Lemma~\ref{lem:eta-sum-1}, the sum on the right is a convex combination of values, each bounded below by $\ell_\rho(\Gamma_n)$.  Thus, by Lemma~\ref{lemma:infinitelength},
\begin{equation*}
\sum_{e\in E}\rho(e)\eta(e) 
\ge \lim_{n\to\infty}\ell_\rho(\Gamma_n) = \ell_\rho(\Gamma_\infty)\ge 1.
\end{equation*}

On the other hand, suppose $\rho\notin\Adm(\Gamma_\infty)$.  Then, there exists $\gamma\in\Gamma_\infty$ such that $\ell_\rho(\gamma)<1$.  Define $\eta$ to be the indicator function of $\gamma$: $\eta = \mathbf{1}_\gamma$.  Note that $\eta\in\Lambda$.  Moreover,
\begin{equation*}
\sum_{e\in E}\rho(e)\eta(e) = \sum_{e\in\gamma}\rho(e) = \ell_\rho(\gamma) < 1.
\end{equation*}
\end{proof}
The following theorem characterizes the set $\Lambda$ and provides the lower bounds for $p$-modulus.
\begin{theorem}\label{thm:mod-lower-bound-eta}
Let $\eta\in\Lambda$, then
\begin{equation*}
\Mod_{p,\sigma}(\Gamma_\infty) \ge 
\begin{cases}
\left(\sum\limits_{e\in E}\sigma(e)^{-\frac{q}{p}}\eta(e)^q\right)^{-\frac{p}{q}} & \text{if }p\in(1,\infty),\\
\left(\sup\limits_{e\in E}\sigma(e)^{-1}\eta(e)\right)^{-1} & \text{if }p=1,\\
\left(\sum\limits_{e\in E}\sigma(e)^{-1}\eta(e)\right)^{-1} & \text{if }p=\infty.
\end{cases}
\end{equation*}
\end{theorem}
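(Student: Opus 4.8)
The plan is to produce, for each fixed $\eta\in\Lambda$, an admissible density for the *dual* side and then apply Hölder's inequality (or its $p=1,\infty$ degenerations) to the identity from Theorem~\ref{thm:rho-eta-inequality}. Concretely, let $\rho\in\Adm(\Gamma_\infty)$ be arbitrary. By Theorem~\ref{thm:rho-eta-inequality} we know $\sum_{e\in E}\rho(e)\eta(e)\ge 1$. The idea is to bound this sum from above by a product of two factors: one involving $\mathcal{E}_{p,\sigma}(\rho)$ and one involving only $\eta$ and $\sigma$. Taking the infimum over admissible $\rho$ then turns the $\eta$-factor into the claimed lower bound for $\Mod_{p,\sigma}(\Gamma_\infty)$.

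For the case $p\in(1,\infty)$, I would write, for each edge $e$,
\begin{equation*}
\rho(e)\eta(e) = \left(\sigma(e)^{1/p}\rho(e)\right)\left(\sigma(e)^{-1/p}\eta(e)\right),
\end{equation*}
and apply Hölder's inequality with exponents $p$ and $q$ to the sum over $E$:
\begin{equation*}
1 \le \sum_{e\in E}\rho(e)\eta(e)
\le \left(\sum_{e\in E}\sigma(e)\rho(e)^p\right)^{1/p}
\left(\sum_{e\in E}\sigma(e)^{-q/p}\eta(e)^q\right)^{1/q}.
\end{equation*}
The first factor is $\mathcal{E}_{p,\sigma}(\rho)^{1/p}$. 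Rearranging gives
\begin{equation*}
\mathcal{E}_{p,\sigma}(\rho) \ge \left(\sum_{e\in E}\sigma(e)^{-q/p}\eta(e)^q\right)^{-p/q},
\end{equation*}
and taking the infimum over $\rho\in\Adm(\Gamma_\infty)$ yields the stated bound. For $p=\infty$, I would instead bound $\rho(e)\eta(e) = \left(\sigma(e)\rho(e)\right)\left(\sigma(e)^{-1}\eta(e)\right) \le \mathcal{E}_{\infty,\sigma}(\rho)\,\sigma(e)^{-1}\eta(e)$, sum over $e$, and use $1\le\mathcal{E}_{\infty,\sigma}(\rho)\sum_e\sigma(e)^{-1}\eta(e)$. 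For $p=1$, I would bound $\rho(e)\eta(e) \le \sigma(e)\rho(e)\cdot\bigl(\sup_{e'}\sigma(e')^{-1}\eta(e')\bigr)$, sum to get $1\le\bigl(\sup_{e'}\sigma(e')^{-1}\eta(e')\bigr)\mathcal{E}_{1,\sigma}(\rho)$, and rearrange. These three cases are really the $q=p/(p-1)$, $q=1$, $q=\infty$ endpoints of the same Hölder argument.

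The main subtlety — not really an obstacle, but the point requiring care — is the handling of infinite sums: the series $\sum_{e\in E}\rho(e)\eta(e)$ may a priori diverge, in which case the right-hand side of Hölder's inequality is $+\infty$ and the bound is vacuously consistent; and if $\mathcal{E}_{p,\sigma}(\rho)=+\infty$ for every admissible $\rho$ then $\Mod_{p,\sigma}(\Gamma_\infty)=+\infty$ and there is nothing to prove. Likewise, if $\sum_{e}\sigma(e)^{-q/p}\eta(e)^q=+\infty$ (resp. the $p=1,\infty$ analogues), the claimed lower bound is $0$ and holds trivially since the modulus is nonnegative. So I would open the proof by disposing of these degenerate cases, then assume all relevant quantities finite and run the Hölder estimate termwise on partial sums over $B_n$, passing to the limit using monotone convergence since all terms are nonnegative. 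No new machinery beyond Theorem~\ref{thm:rho-eta-inequality} and Hölder is needed.
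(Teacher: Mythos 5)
Your proposal is correct and follows essentially the same route as the paper's proof: the identical Hölder factorization $\sigma(e)^{1/p}\rho(e)\cdot\sigma(e)^{-1/p}\eta(e)$ applied to the inequality $\sum_{e\in E}\rho(e)\eta(e)\ge 1$ from Theorem~\ref{thm:rho-eta-inequality}, with the same $p=1$ and $p=\infty$ endpoint versions, followed by taking the infimum over admissible $\rho$. Your added remarks on disposing of the degenerate infinite-sum cases are a reasonable extra precaution but do not change the argument.
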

\begin{proof}
Let $\rho\in\Adm(\Gamma_\infty)$.  First, consider the case $p\in(1,\infty)$.  By Theorem~\ref{thm:rho-eta-inequality} and H\"older's inequality, we have that
\begin{equation*}
\begin{split}
1 &\le \sum_{e\in E}\rho(e)\eta(e) = \sum_{e\in E}\sigma(e)^{\frac{1}{p}}\rho(e)\sigma(e)^{-\frac{1}{p}}\eta(e) \\
&\le \left(\sum_{e\in E}\sigma(e)\rho(e)^p\right)^\frac{1}{p}
\left(\sum_{e\in E}\sigma(e)^{-\frac{q}{p}}\eta(e)^q\right)^\frac{1}{q},
\end{split}
\end{equation*}
from which it follows that
\begin{equation*}
\mathcal{E}_{p,\sigma}(\rho)\ge\left(\sum_{e\in E}\sigma(e)^{-\frac{q}{p}}\eta(e)^q\right)^{-\frac{p}{q}}.
\end{equation*}
For $p=1$,
\begin{equation*}
    1 \le \sum_{e\in E}\rho(e)\eta(e)=\sum_{e\in E}\sigma(e)\rho(e)\left(\sigma(e)^{-1}\eta(e)\right) \le \left(\sum_{e\in E}\sigma(e)\rho(e)\right)\left(\sup_{e \in E}\sigma(e)^{-1}\eta(e)\right)
\end{equation*}
And for $p=\infty$, 
\begin{equation*}
    1 \le \sum_{e\in E}\sigma(e)\rho(e)\left(\sigma(e)^{-1}\eta(e)\right) \le \left(\sup_{e \in E}\sigma(e)\rho(e)\right)\left(\sum_{e \in E}\sigma(e)^{-1}\eta(e)\right)
\end{equation*}
Taking the infimum over all admissible densities yields the desired inequality.
\end{proof}
From the Theorem~\ref{thm:mod-lower-bound-eta}, we have immediate two important consequences.
\begin{corollary}
Suppose $\Mod_{p,\sigma}(\Gamma_\infty)>0$ for $ 1<p< \infty$, then  \begin{equation*}
    \Mod_{p,\sigma}(\Gamma_\infty)\ge \left(\inf\limits_{\eta \in \Lambda}\sum_{e\in E}\sigma(e)^{-\frac{q}{p}}\eta(e)^q\right)^{-\frac{p}{q}}
\end{equation*}
Moreover, equality is attained for a radially symmetric infinite tree, and the optimal density is 
\begin{equation*}
    \rho_k^*=\frac{\left(\sigma_k\eta_k^{-1}\right)^{-\frac{q}{p}}}{\sum_{\ell=1}^\infty\left(\sigma_k\eta_k^{-1}\right)^{-\frac{q}{p}}}
\end{equation*}
\end{corollary}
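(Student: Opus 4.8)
The plan is to deduce both assertions from Theorem~\ref{thm:mod-lower-bound-eta} together with an explicit identification of the minimizing element of $\Lambda$. For the first (lower-bound) inequality, observe that the estimate in Theorem~\ref{thm:mod-lower-bound-eta} holds simultaneously for every $\eta\in\Lambda$, while its left-hand side $\Mod_{p,\sigma}(\Gamma_\infty)$ does not depend on $\eta$. Since $t\mapsto t^{-p/q}$ is decreasing on $(0,\infty)$, taking the supremum of the right-hand side over $\eta\in\Lambda$ turns $\sup_{\eta}\bigl(\sum_{e}\sigma(e)^{-q/p}\eta(e)^q\bigr)^{-p/q}$ into $\bigl(\inf_{\eta\in\Lambda}\sum_{e}\sigma(e)^{-q/p}\eta(e)^q\bigr)^{-p/q}$, which is exactly the claimed bound. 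The hypothesis $\Mod_{p,\sigma}(\Gamma_\infty)>0$, together with the finite upper bound from Lemma~\ref{lemma:p2bounded}, guarantees this infimum is finite and positive, so no degenerate cases arise.

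For the equality on a radially symmetric tree, I would show that
\begin{equation*}
\inf_{\eta\in\Lambda}\sum_{e\in E}\sigma(e)^{-q/p}\eta(e)^q=\sum_{k=1}^{\infty}\bigl(\sigma_k|S_k|\bigr)^{-q/p},
\end{equation*}
after which \eqref{eq:mod-formula-inf-family} gives $\bigl(\inf_{\eta}\sum_{e}\sigma(e)^{-q/p}\eta(e)^q\bigr)^{-p/q}=\Mod_{p,\sigma}(\Gamma_\infty)$ and, combined with the first inequality, forces equality. The bound ``$\ge$'' in the displayed identity is the main step: grouping the sum shell by shell gives $\sum_{e\in E}\sigma(e)^{-q/p}\eta(e)^q=\sum_{k}\sigma_k^{-q/p}\sum_{e\in S_k}\eta(e)^q$, and since $q>1$ the map $t\mapsto t^q$ is convex, so Jensen's inequality applied on each shell $S_k$ together with Lemma~\ref{lem:eta-sum-1} (which gives $\sum_{e\in S_k}\eta(e)=1$) yields $\sum_{e\in S_k}\eta(e)^q\ge|S_k|^{1-q}$; using the identity $q-1=q/p$ (immediate from $\tfrac1p+\tfrac1q=1$) this equals $|S_k|^{-q/p}$, and summing over $k$ gives the claimed bound. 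The reverse bound ``$\le$'' comes from testing with the radially symmetric candidate $\eta^\ast(e):=|S_k|^{-1}$ for $e\in S_k$; radial symmetry of the tree makes $\eta^\ast\in\Lambda$ (it satisfies $\sum_{e\in S_1}\eta^\ast(e)=1$ and $\sum_{e'\in c(e)}\eta^\ast(e')=C(\gen(e))/|S_{\gen(e)+1}|=|S_{\gen(e)}|^{-1}=\eta^\ast(e)$), and a direct computation gives $\sum_{e\in E}\sigma(e)^{-q/p}\eta^\ast(e)^q=\sum_{k}|S_k|\,\sigma_k^{-q/p}|S_k|^{-q}=\sum_{k}(\sigma_k|S_k|)^{-q/p}$.

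Finally, for the optimal density: substituting $\eta_k=|S_k|^{-1}$ into the stated formula for $\rho^\ast_k$ reduces it to the density in \eqref{eq:opt-symm-rho}, which is already known from Theorem~\ref{thm:mod-opt-den} to be admissible (its shell sum equals $1$) and to have energy equal to $\Mod_{p,\sigma}(\Gamma_\infty)$; alternatively, one verifies admissibility and the energy identity for this $\rho^\ast$ directly by the same algebra used in the proof of Theorem~\ref{thm:mod-opt-den}. I expect the only real obstacle to be recognizing that the infimum over the (infinite-dimensional, non-compact) set $\Lambda$ need not be argued via any compactness or existence theory at all: the shell-wise Jensen bound furnishes an explicit lower bound that is matched exactly by the symmetric test function $\eta^\ast$, so the value of the infimum is pinned down unconditionally, and the remaining manipulations are the routine exponent bookkeeping built on $q-1=q/p$.
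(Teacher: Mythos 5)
Your proposal is correct and follows the route the paper intends (it states the corollary as an immediate consequence of Theorem~\ref{thm:mod-lower-bound-eta}, with the radially symmetric case settled by comparison with formula~\eqref{eq:mod-formula-inf-family} via the uniform flow $\eta^*(e)=|S_{\gen(e)}|^{-1}$ and the optimal density~\eqref{eq:opt-symm-rho}). Your shell-wise Jensen bound is not strictly needed for the equality claim --- testing with $\eta^*$ already matches the value given by~\eqref{eq:mod-formula-inf-family} --- but it is a harmless and even useful addition, since it shows $\eta^*$ actually attains the infimum over $\Lambda$.
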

\begin{corollary}\label{coro:lowerboundMod1}
Let $G=(V,E,\sigma,o)$ be a radially symmetric infinite tree.  Then
\begin{equation*}
\Mod_{1,\sigma}(\Gamma_\infty) \ge \inf_{k\ge 1}\sigma(S_k).
\end{equation*}
\end{corollary}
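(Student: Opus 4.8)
The plan is to derive this directly from Theorem~\ref{thm:mod-lower-bound-eta} in the case $p=1$, which already supplies a lower bound of the form $\Mod_{1,\sigma}(\Gamma_\infty)\ge\left(\sup_{e\in E}\sigma(e)^{-1}\eta(e)\right)^{-1}$ for every $\eta\in\Lambda$. So it suffices to exhibit one well-chosen $\eta\in\Lambda$ for which $\sup_{e\in E}\sigma(e)^{-1}\eta(e)$ equals $\left(\inf_{k\ge1}\sigma(S_k)\right)^{-1}$ (or is bounded above by it), and then take reciprocals.

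The natural candidate, matching the symmetry of the tree, is the mass that is uniform on each shell: define $\eta(e):=1/|S_{\gen(e)}|$. First I would verify $\eta\in\Lambda$. The normalization condition is immediate, since $\sum_{e\in S_1}\eta(e)=|S_1|\cdot|S_1|^{-1}=1$. For the flow-conservation condition, fix $e\in S_k$; by radial symmetry every generation-$k$ edge has the same number of children $C_k$ and $|S_{k+1}|=C_k\,|S_k|$, hence $\sum_{e'\in c(e)}\eta(e')=C_k/|S_{k+1}|=1/|S_k|=\eta(e)$, as required.

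Next I would evaluate the supremum appearing in Theorem~\ref{thm:mod-lower-bound-eta}. Since both $\sigma$ and this $\eta$ are constant on shells, writing $\sigma_k$ for the common value of $\sigma$ on $S_k$,
\begin{equation*}
\sup_{e\in E}\sigma(e)^{-1}\eta(e)=\sup_{k\ge1}\frac{1}{\sigma_k|S_k|}=\sup_{k\ge1}\frac{1}{\sigma(S_k)}=\left(\inf_{k\ge1}\sigma(S_k)\right)^{-1}.
\end{equation*}
Substituting into the $p=1$ bound of Theorem~\ref{thm:mod-lower-bound-eta} then gives $\Mod_{1,\sigma}(\Gamma_\infty)\ge\inf_{k\ge1}\sigma(S_k)$.

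I do not expect a genuine obstacle here; the statement is essentially a specialization of Theorem~\ref{thm:mod-lower-bound-eta}. The only points deserving care are the verification that the uniform-per-shell $\eta$ satisfies both defining constraints of $\Lambda$, and the degenerate case $\inf_{k\ge1}\sigma(S_k)=0$: there the claimed inequality is simply $\Mod_{1,\sigma}(\Gamma_\infty)\ge0$, which holds trivially and is consistent with reading the reciprocal of $+\infty$ as $0$.
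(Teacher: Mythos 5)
Your proposal is correct and follows essentially the same route the paper intends: the corollary is presented there as an immediate specialization of Theorem~\ref{thm:mod-lower-bound-eta} at $p=1$, and the uniform-per-shell choice $\eta(e)=1/|S_{\gen(e)}|$ (which lies in $\Lambda$ exactly by radial symmetry, as you verify) is the witness that turns that bound into $\inf_{k\ge1}\sigma(S_k)$. Your explicit check of the $\Lambda$-constraints and of the degenerate case $\inf_k\sigma(S_k)=0$ only makes the paper's ``immediate consequence'' more transparent.
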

Finally, we arrived to establish the results of the Theorem~\ref{theorem:min-cut radially}. The $1$-modulus is the infimum of cuts for the infinite tree, and the $\infty$-modulus is the reciprocal of the weighted length of the family of descending paths. Let $C$ be a cut set for $\Gamma_\infty$ in a radially symmetric binary tree. 

\begin{proof}[Proof of Theorem~\ref{theorem:min-cut radially}]
Let $C \subseteq E$ be a cut set for $\Gamma_\infty$. Define a density $\rho$ on $E$ as follows
\begin{equation*}
 \rho(e)= \begin{cases}
 1 & \text{if } e \in C,\\
 0 & \text{otherwise}.
            \end{cases} 
\end{equation*} 
It is straightforward to show that $\rho \in \Adm(\Gamma_\infty)$ since every descending path crosses at least one edge of $C$. Moreover, 
\begin{equation*}
    \Mod_{1,\sigma}(\Gamma_\infty)\le 
    \mathcal{E}_{1,\sigma}(\rho) =
    \sum_{e \in E}\sigma(e)\rho(e)=\sum_{e \in C}\sigma(e) =\sigma(C).
\end{equation*}
Taking the infimum shows that
\begin{equation*}
    \Mod_{1,\sigma}(\Gamma_\infty) \le  \inf \{\sigma(C): C \, \text{is a cut of} \,\, \Gamma_\infty\}.
\end{equation*}
On the other hand, the shell $S_k$ is a cut for the family of descending paths for all $k$. So, as a consequence of Corollary~\eqref{coro:lowerboundMod1}, we have
\begin{equation*}
\Mod_{1, \sigma}(\Gamma_\infty) \ge
\inf\limits_{k \ge 1}\sigma(S_k)
\ge \inf\{\sigma(C):C \, \text{is a cut for }\Gamma_\infty\}.
\end{equation*}
\end{proof}
\begin{theorem}\label{thm:mod_infity}
Let $G=(V,E,\sigma,o)$ be a radially symmetric tree. Then 
\begin{equation*}
    \Mod_{\infty,\sigma}(\Gamma_\infty) =
\begin{cases}
\frac{1}{\ell_{\sigma^{-1}}(\Gamma_\infty)} &\text{if }\ell_{\sigma^{-1}}(\Gamma_\infty)<\infty,\\
0 &\text{otherwise}.
\end{cases}
\end{equation*}
\end{theorem}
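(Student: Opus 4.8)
The plan is to prove matching upper and lower bounds. First I would record the elementary observation that, because $\sigma(e)=\sigma_k$ depends only on $\gen(e)=k$, every infinite descending path $\gamma\in\Gamma_\infty$ has the \emph{same} weighted length, namely $\ell_{\sigma^{-1}}(\gamma)=\sum_{k=1}^\infty\sigma_k^{-1}$; hence $\ell_{\sigma^{-1}}(\Gamma_\infty)=\sum_{k=1}^\infty\sigma_k^{-1}=:L$, and the claim becomes $\Mod_{\infty,\sigma}(\Gamma_\infty)=1/L$ when $L<\infty$ and $\Mod_{\infty,\sigma}(\Gamma_\infty)=0$ when $L=\infty$. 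One caveat to flag: the symmetrization lemma (Lemma~\ref{lem:rad-sym}) is proved via strict convexity of the energy and so does not literally cover $p=\infty$; I therefore handle both directions directly rather than by first reducing to radially symmetric densities.

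For the upper bound I would exhibit explicit admissible densities. If $L<\infty$, set $\rho(e)=1/(L\sigma_k)$ for $e\in S_k$. Since every $\gamma\in\Gamma_\infty$ meets exactly one edge in each shell, $\ell_\rho(\gamma)=\sum_{k\ge1}\rho_k=1$, so $\rho\in\Adm(\Gamma_\infty)$; and $\sigma(e)\rho(e)=1/L$ for every $e\in E$, so $\Mod_{\infty,\sigma}(\Gamma_\infty)\le\mathcal{E}_{\infty,\sigma}(\rho)=1/L$. If instead $L=\infty$, fix $\epsilon>0$, pick $N$ with $\epsilon\sum_{k=1}^N\sigma_k^{-1}\ge1$ (possible since the series diverges), and set $\rho(e)=\epsilon/\sigma_k$ for $e\in S_k$ with $k\le N$ and $\rho(e)=0$ otherwise. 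Then $\ell_\rho(\gamma)=\epsilon\sum_{k=1}^N\sigma_k^{-1}\ge1$ for every $\gamma\in\Gamma_\infty$, so $\rho$ is admissible with $\mathcal{E}_{\infty,\sigma}(\rho)=\epsilon$; letting $\epsilon\to0$ gives $\Mod_{\infty,\sigma}(\Gamma_\infty)=0$.

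For the lower bound I would invoke Theorem~\ref{thm:mod-lower-bound-eta} in the case $p=\infty$ with the uniform choice $\eta(e)=1/|S_k|$ for $e\in S_k$. To check $\eta\in\Lambda$: $\sum_{e\in S_1}\eta(e)=1$, and for $e\in S_k$ with $C(k)$ children one has $\sum_{e'\in c(e)}\eta(e')=C(k)/|S_{k+1}|=C(k)/(C(k)|S_k|)=1/|S_k|=\eta(e)$, using $|S_{k+1}|=C(k)|S_k|$. Then $\sum_{e\in E}\sigma(e)^{-1}\eta(e)=\sum_{k=1}^\infty|S_k|\cdot\sigma_k^{-1}|S_k|^{-1}=\sum_{k=1}^\infty\sigma_k^{-1}=L$, so Theorem~\ref{thm:mod-lower-bound-eta} yields $\Mod_{\infty,\sigma}(\Gamma_\infty)\ge 1/L$, with the convention that this bound is $0$ when $L=\infty$. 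Combining this with the upper bound of the previous paragraph proves the theorem in both cases.

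I do not expect a genuine obstacle here: the argument is short, and the only point requiring care is the one already noted, that the reduction to radially symmetric densities is unavailable for $p=\infty$, so the upper bound must come from an explicit construction and the lower bound from the duality inequality of Theorem~\ref{thm:mod-lower-bound-eta}, which holds for every $\eta\in\Lambda$ with no symmetry hypothesis.
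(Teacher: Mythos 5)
Your proof is correct and follows essentially the same route as the paper: the lower bound comes from Theorem~\ref{thm:mod-lower-bound-eta} (the paper evaluates $\sum_{e\in E}\sigma(e)^{-1}\eta(e)=\ell_{\sigma^{-1}}(\Gamma_\infty)$ for an arbitrary $\eta\in\Lambda$ via Lemma~\ref{lem:rho-eta-Bn}, while you simply exhibit the uniform $\eta(e)=1/|S_k|$, which suffices), and the upper bound comes from the same explicit density $\rho=\sigma^{-1}/\ell_{\sigma^{-1}}(\Gamma_\infty)$ in the finite case and an $\epsilon\,\sigma^{-1}$-type density (truncated in your version, untruncated in the paper's) in the divergent case.
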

\begin{proof}
Let $\eta\in\Lambda$.  From Lemma~\ref{lem:rho-eta-Bn} and~\eqref{lemma:infinitelength}, it follows that
\begin{equation*}
\sum_{e\in E}\sigma(e)^{-1}\eta(e) =
\lim_{n\to\infty}\sum_{e\in B_n}\sigma(e)^{-1}\eta(e) =
\lim_{n\to\infty}\sum_{e\in S_n}\ell_{\sigma^{-1}}(\gamma_e)\eta(e)
=\lim_{n\to\infty}\ell_{\sigma^{-1}}(\Gamma_n)
= \ell_{\sigma^{-1}}(\Gamma_\infty).
\end{equation*}

First, suppose that $\ell_{\sigma^{-1}}(\Gamma_\infty)<0$.  Then, Theorem~\ref{thm:mod-lower-bound-eta} shows
\begin{equation}\label{eq:infinity_mode}
    \Mod_{\infty, \sigma}(\Gamma_\infty) \ge \left( \sum_{e\in E}\sigma(e)^{-1}\eta(e)\right)^{-1} \ge \left(\ell_{\sigma^{-1}}(\Gamma_\infty)\right)^{-1}
\end{equation}
Define the density 
\begin{equation*}
    \rho(e)=
    \sigma(e)^{-1}(\ell_{\sigma^{-1}}(\Gamma_\infty))^{-1}
\end{equation*}
 for all $e\in E$. The density $\rho$ is admissible since for $\gamma \in \Gamma_\infty$,
\begin{equation*}
    \ell_{\rho}(\gamma)=\sum_{e\in \gamma}\rho(e)=\frac{1}{\ell_{\sigma^{-1}}(\Gamma_\infty)}\sum_{e\in \gamma}\sigma(e)^{-1}= 1.
\end{equation*}
Now, 
\begin{equation*}
    \sigma(e)\rho(e)=\frac{1}{\ell_{\sigma^{-1}}(\Gamma_\infty)}  .
\end{equation*}
and 
\begin{equation*}
    \sup_{e \in E}\sigma(e)\rho(e) = \frac{1}{\ell_{\sigma^{-1}}(\Gamma_\infty)}  
\end{equation*}
This shows that
\begin{equation}\label{eq:mod_infty1}
    \Mod_{\infty, \sigma}(\Gamma_\infty) \le \sup_{e \in E}\sigma(e)\rho(e) = \frac{1}{\ell_{\sigma^{-1}}(\Gamma_\infty)}.
\end{equation}
Hence, by~\eqref{eq:infinity_mode} and ~\eqref{eq:mod_infty1}
\begin{equation*}
    \Mod_{\infty, \sigma}(\Gamma_\infty)= \frac{1}{\ell_{\sigma^{-1}}(\Gamma_\infty)}.
\end{equation*}

On the other hand, suppose $\ell_{\sigma^{-1}}(\Gamma_\infty)=\infty$.  Let $\epsilon>0$ and consider the density $\rho(e)=\epsilon\sigma(e)^{-1}$.  Since
\begin{equation*}
\ell_\rho(\Gamma_\infty)=\epsilon\ell_{\sigma^{-1}}(\Gamma_\infty) = \infty,
\end{equation*}
this density is admissible.  Thus,
\begin{equation*}
\Mod_{\infty,\sigma}(\Gamma_\infty) \le \mathcal{E}_{\infty,\sigma}(\rho) = \epsilon.
\end{equation*}
Since $\epsilon>0$ is arbitrary, the modulus is zero.
\end{proof}
If $\sigma \equiv 1$ and $\ell_{\sigma^{-1}}(\Gamma_{\infty})$ becomes arbitrary large, then the Theorem~\ref{thm:mod_infity} an immediate result.
\begin{corollary}\label{cor:infity_modulus}

In the limit,
\begin{equation*}\label{eq:infinitymodulus_as_limit}
\lim_{p\to\infty}\Mod_{p,1}(\Gamma_\infty)^{\frac{1}{p}} = 0=\Mod_{\infty,1}(\Gamma_\infty).
\end{equation*}
\end{corollary}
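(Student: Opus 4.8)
The plan is to derive Corollary~\ref{cor:infity_modulus} as a direct specialization of Theorem~\ref{thm:mod_infity} together with the series formula~\eqref{eq:mod-formula-inf-family}. First I would observe that for an unweighted radially symmetric tree ($\sigma\equiv 1$) we have $\sigma(e)^{-1}=1$ for every edge, so $\ell_{\sigma^{-1}}(\gamma)=\ell(\gamma)=+\infty$ for every $\gamma\in\Gamma_\infty$, hence $\ell_{\sigma^{-1}}(\Gamma_\infty)=+\infty$. Applying the second case of Theorem~\ref{thm:mod_infity} immediately gives $\Mod_{\infty,1}(\Gamma_\infty)=0$, which disposes of the right-hand equality.

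Next I would handle the limit $\lim_{p\to\infty}\Mod_{p,1}(\Gamma_\infty)^{1/p}=0$. Using~\eqref{eq:mod-formula-inf-family} with $\sigma_k=1$, write
\begin{equation*}
\Mod_{p,1}(\Gamma_\infty)=\left(\sum_{k=1}^\infty |S_k|^{-q/p}\right)^{-p/q},
\end{equation*}
so that $\Mod_{p,1}(\Gamma_\infty)^{1/p}=\left(\sum_{k=1}^\infty |S_k|^{-q/p}\right)^{-1/q}$. Since the tree is proper, $|S_k|\ge 1$ for all $k$, so each term $|S_k|^{-q/p}\le 1$; in fact for $k=1$ the term equals $|S_1|^{-q/p}$. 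The exponent $q/p=1/(p-1)\to 0$ as $p\to\infty$, so each fixed term $|S_k|^{-q/p}\to 1$, and the partial sums of the series stay bounded below by, say, the first $N$ terms, which tend to $N$. Thus $\sum_{k\ge 1}|S_k|^{-q/p}\to\infty$ as $p\to\infty$ (it exceeds any fixed $N$ for $p$ large). Combined with $q=p/(p-1)\to 1$, we get $\left(\sum_k |S_k|^{-q/p}\right)^{-1/q}\to (\infty)^{-1}=0$.

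The only mild subtlety — and the step I would treat most carefully — is making the interchange of limits rigorous: I need the series $\sum_k |S_k|^{-q/p}$ to diverge to $+\infty$ as $p\to\infty$ uniformly enough to conclude the reciprocal tends to zero, rather than merely that each term tends to $1$. This is handled by the elementary two-sided bound: for any fixed $N$, $\sum_{k=1}^\infty |S_k|^{-q/p}\ge \sum_{k=1}^N |S_k|^{-q/p}\to N$ as $p\to\infty$, so $\liminf_{p\to\infty}\sum_{k=1}^\infty |S_k|^{-q/p}\ge N$ for every $N$, forcing the sum to $+\infty$; since $q\to 1$ is bounded, the exponent $-1/q$ stays bounded, and the product tends to $0$. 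I would present this in two or three lines. If one instead wishes to avoid the series formula entirely, the same conclusion follows even more cheaply from Lemma~\ref{lemma:p2bounded}: $\Mod_{p,1}(\Gamma_\infty)\le \inf_n |S_n|$, a finite constant $M$ independent of $p$, so $\Mod_{p,1}(\Gamma_\infty)^{1/p}\le M^{1/p}\to 1$ — but that only gives an upper bound of $1$, not $0$, so the series argument (or Theorem~\ref{thm:mod_infity} applied to the $p$-modulus for finite $p$) is genuinely needed for the sharp statement. Accordingly, I would rely on~\eqref{eq:mod-formula-inf-family} as the main tool.
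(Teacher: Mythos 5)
Your proposal is correct and follows essentially the route the paper intends: the equality $\Mod_{\infty,1}(\Gamma_\infty)=0$ comes from specializing Theorem~\ref{thm:mod_infity} to $\sigma\equiv 1$ (where $\ell_{\sigma^{-1}}(\Gamma_\infty)=+\infty$), and the limit statement comes from the series formula~\eqref{eq:mod-formula-inf-family} with $q/p=1/(p-1)\to 0$. In fact your write-up is more explicit than the paper's one-line justification, since you supply the truncation argument showing $\sum_k|S_k|^{-q/p}\to\infty$ with the exponent $-1/q$ bounded away from $0$, which is exactly the detail the paper leaves implicit.
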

In the case of an unweighted proper tree, each shell $S_k$ is a cut for the family of descending paths. The following lemma shows that the first shell $S_1$ is a minimum cut for the family of descending paths.
\begin{lemma}\label{lem:cut-inequality}
The $1$-modulus satisfies
\begin{equation*}
\Mod_{1,1}(\Gamma_\infty) = |S_1|.
\end{equation*}
\end{lemma}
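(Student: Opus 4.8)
The plan is to combine the general upper bound from Lemma~\ref{lemma:p2bounded} with the general lower bound from Corollary~\ref{coro:lowerboundMod1}, both specialized to the unweighted case $\sigma\equiv 1$ and $p=1$. On the one hand, Lemma~\ref{lemma:p2bounded} gives $\Mod_{1,1}(\Gamma_\infty)\le\inf_n|S_n|$. Since in a proper infinite tree every edge has at least one child, the sequence $|S_n|$ is non-decreasing, so $\inf_n|S_n|=|S_1|$; hence $\Mod_{1,1}(\Gamma_\infty)\le|S_1|$. On the other hand, Corollary~\ref{coro:lowerboundMod1} (with $\sigma\equiv 1$, so $\sigma(S_k)=|S_k|$) gives $\Mod_{1,1}(\Gamma_\infty)\ge\inf_{k\ge 1}|S_k|=|S_1|$, again using monotonicity of $|S_k|$. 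Putting the two inequalities together yields $\Mod_{1,1}(\Gamma_\infty)=|S_1|$.

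Alternatively — and this is the route I would actually write out to make the ``minimum cut'' reading explicit — I would invoke Theorem~\ref{theorem:min-cut radially} directly: $\Mod_{1,1}(\Gamma_\infty)=\inf\{|C| : C\text{ is a cut of }\Gamma_\infty\}$. Every shell $S_k$ is a cut (each descending path meets every shell in exactly one edge), so the infimum is at most $|S_1|$. For the reverse, I would argue that no cut can have fewer than $|S_1|$ edges: if $C$ is any cut, then projecting each edge of $C$ to the first-generation edge lying above it defines a map $c(e)\mapsto$ its generation-$1$ ancestor, and this map must be surjective onto $S_1$, since otherwise some first-generation edge $e_1\in S_1$ has no descendant (and no ancestor) in $C$, and then a descending path starting with $e_1$ and avoiding $C$ below $e_1$ could be built generation by generation using $C(e)\ge 1$, contradicting that $C$ is a cut. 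Surjectivity forces $|C|\ge|S_1|$.

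The main obstacle — really the only subtle point — is justifying that an infinite descending path avoiding a given cut $C$ can always be constructed when the ``surjectivity'' condition fails; this is a König's-lemma-type argument that relies on local finiteness ($C(e)<\infty$) together with $C(e)\ge 1$ for all $e$, exactly the hypotheses bundled into ``proper infinite tree.'' However, since the excerpt already hands us Lemma~\ref{lemma:p2bounded} and Corollary~\ref{coro:lowerboundMod1}, the cleanest proof simply sandwiches the modulus between $|S_1|$ and $|S_1|$ using those two results plus the observation that $\{|S_k|\}$ is non-decreasing in a proper infinite tree. I would present that sandwich as the proof and remark that it identifies $S_1$ as a minimum cut, consistent with Theorem~\ref{theorem:min-cut radially}.
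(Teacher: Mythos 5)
Your proposal is correct in substance but follows a genuinely different route from the paper. The paper does not sandwich via Lemma~\ref{lemma:p2bounded} and Corollary~\ref{coro:lowerboundMod1}; instead it argues directly: the indicator density of $S_1$ gives the upper bound $\Mod_{1,1}(\Gamma_\infty)\le|S_1|$, and for the lower bound it picks, for each of the $r=|S_1|$ first-generation edges, one infinite descending path through that edge; these paths are pairwise edge-disjoint, each has $\rho$-length at least $1$ for any admissible $\rho$, so $\mathcal{E}_{1,1}(\rho)\ge r$. That argument uses no duality and no symmetry, so it proves the lemma for an arbitrary unweighted proper tree, which is how the lemma is framed in the surrounding text. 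Your sandwich, by contrast, leans on Corollary~\ref{coro:lowerboundMod1} (and, in your alternative route, on Theorem~\ref{theorem:min-cut radially}), both of which are stated and proved only for radially symmetric trees --- the uniform-on-shells flow $\eta$ underlying the corollary is not in $\Lambda$ when edges in the same shell have different numbers of children. So as written your proof establishes the identity only in the radially symmetric case; it is perfectly adequate for how the lemma is later used (the $1$-$2$ radially symmetric trees of Section~\ref{Chapter:critical_exponent}), and your monotonicity observation $|S_{n+1}|\ge|S_n|$ (from $C(e)\ge1$) correctly reduces both bounds to $|S_1|$, but it is strictly less general than the paper's two-line disjoint-paths argument. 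Your cut-counting alternative (surjectivity of the map from a cut to its generation-$1$ ancestors) is also sound --- and, as a side note, no K\H{o}nig-type compactness is needed there: if no edge of $C$ lies on or below some $e_1\in S_1$, any descending continuation through $e_1$, which exists since $C(e)\ge1$, already avoids $C$.
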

\begin{proof}
First, note that the density $\rho_1$, which assigns $1$ to every edge in $S_1$ and $0$ to every other edge, is admissible.  So
\begin{equation*}
\Mod_{1,1}(\Gamma_\infty) \le \mathcal{E}_{1,1}(\rho_1) = |S_1|.
\end{equation*}

On the other hand, suppose $\rho$ is admissible.  Let $\{e_1,e_2,e_3,\ldots,e_r\}$ be an enumeration of $S_1$ with $r=|S_1|$, and let $\gamma_i$ be an arbitrary path in $\Gamma_\infty$ that passes through $e_i$ for $i=1,2,3,\ldots,r$.  Note that the paths $\{\gamma_i\}$ are pairwise disjoint.  Thus,
\begin{equation*}
\mathcal{E}_{1,1}(\rho) = \sum_{e\in E}\rho(e) \ge \sum_{e\in\bigcup\limits_{i=1}^r\gamma_i}\rho(e)
= \sum_{i=1}^r\sum_{e\in\gamma_i}\rho(e) \ge r=|S_1|.
\end{equation*}
Since this is true from an arbitrary admissible $\rho$, it follows that $\Mod_{1,1}(\Gamma_\infty)\ge |S_1|$.  Since the opposite inequality was already established, equality holds.
\end{proof}

\section{Some properties}
In \eqref{eq:mod-formula-inf-family} the $\Mod_{p, \sigma}(\Gamma_\infty)$  is expressed as a reciprocal of an infinite series. Therefore, $\Mod_{p, \sigma}(\Gamma_\infty)$ is either zero or some positive finite value depending on whether the series converges or diverges. For given weight $\sigma$, if the $\Mod_{\sigma, p}(\Gamma_{\infty}) >0$, then the $p$-modulus holds monotonicity property in parameter $p$. Let $p'$ and $q'$ be the conjugate of $p$ and $q$, respectively with $1<p\le p'< \infty$. The series~\eqref{eq:mod-formula-inf-family} holds the inequality
\begin{equation*}
\sum_{k=1}^\infty\left(\sigma_k|S_k|\right)^{-\frac{q}{p}} \le \sum_{k=1}^\infty\left(\sigma_k|S_k|\right)^{-\frac{q'}{p'}} .
\end{equation*}
Raising the power to negative of $p/q$ both side of inequality yields the following lemma
\begin{lemma}\label{lemma:symmetric_pmono}
Let $1<p\le p'< \infty$ and $\Mod_{p,\sigma}(\Gamma_\infty)>0$. Then
\begin{equation*}
    \Mod_{p',\sigma}\left(\Gamma_\infty\right)\le \Mod_{p,\sigma}\left(\Gamma_\infty\right).
\end{equation*}
\end{lemma}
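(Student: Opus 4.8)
The plan is to avoid the uncountably many path constraints entirely and instead transplant a single good density between the two optimization problems. Since $\Mod_{p,\sigma}(\Gamma_\infty)>0$, Theorem~\ref{thm:mod-opt-den} provides an extremal density for the $p$-problem, and by Lemma~\ref{lem:rad-sym} (equivalently, by the explicit formula~\eqref{eq:opt-symm-rho}) we may take it radially symmetric, equal to some $\rho_k^*$ on $S_k$, and normalized so that $\sum_{k=1}^\infty\rho_k^*=1$. The one structural fact we extract from this is that $0\le\rho_k^*\le 1$ for every $k$.

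With this in hand the argument is short. First I would note that $\rho^*\in\Adm(\Gamma_\infty)$ makes $\rho^*$ a feasible competitor in the definition of $\Mod_{p',\sigma}(\Gamma_\infty)$, so that
\[
\Mod_{p',\sigma}(\Gamma_\infty)\;\le\;\mathcal{E}_{p',\sigma}(\rho^*)\;=\;\sum_{k=1}^\infty\sigma_k|S_k|\,(\rho_k^*)^{p'}.
\]
Then, since $0\le\rho_k^*\le 1$ and $p\le p'$, one has $(\rho_k^*)^{p'}\le(\rho_k^*)^{p}$ termwise, whence
\[
\sum_{k=1}^\infty\sigma_k|S_k|\,(\rho_k^*)^{p'}\;\le\;\sum_{k=1}^\infty\sigma_k|S_k|\,(\rho_k^*)^{p}\;=\;\mathcal{E}_{p,\sigma}(\rho^*)\;=\;\Mod_{p,\sigma}(\Gamma_\infty),
\]
the last equality because $\rho^*$ was chosen extremal for $p$. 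Chaining the two displays gives the claimed inequality; the case $\Mod_{p',\sigma}(\Gamma_\infty)=0$ is trivial and is subsumed.

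An alternative route stays with the series formula~\eqref{eq:mod-formula-inf-family}: writing $b_k:=(\sigma_k|S_k|)^{-1}$ and $t:=q/p$ (so that $p/q=1/t$), one has $\Mod_{p,\sigma}(\Gamma_\infty)=\big(\sum_k b_k^{\,t}\big)^{-1/t}=\|(b_k)\|_{\ell^t}^{-1}$, and similarly $\Mod_{p',\sigma}(\Gamma_\infty)=\|(b_k)\|_{\ell^{t'}}^{-1}$ with $t':=q'/p'$. Since $q/p$ is a decreasing function of $p\in(1,\infty)$, $p\le p'$ forces $t'\le t$, and the standard monotonicity of $\ell^s$-(quasi)norms over counting measure, valid for all $0<s$ via the pointwise bound $b_k\le\|(b_j)\|_{\ell^s}$, gives $\|(b_k)\|_{\ell^t}\le\|(b_k)\|_{\ell^{t'}}$; taking reciprocals yields the lemma.

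I do not expect a genuine obstacle, as both arguments are a few lines. The point that actually needs care is that the termwise inequality on the series written just above the lemma, $\sum_k(\sigma_k|S_k|)^{-q/p}\le\sum_k(\sigma_k|S_k|)^{-q'/p'}$, is valid term by term only when $\sigma_k|S_k|\ge 1$ for every $k$ and can fail otherwise, so the monotonicity of the modulus should be deduced from the first route (using $\rho_k^*\le 1$) or the second (using the $\ell^s$-norm comparison), both of which hold without any lower bound on $\sigma_k|S_k|$. A secondary detail is that one must take the normalization $\sum_k\rho_k^*=1$ from the explicit extremal constructed in the proof of Theorem~\ref{thm:mod-opt-den}, not merely from admissibility of $\rho^*$.
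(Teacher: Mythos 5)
Your proof is correct, and it takes a different (and in fact more careful) route than the paper. The paper's own argument is exactly the one you warn against: it asserts the termwise comparison $\sum_{k}(\sigma_k|S_k|)^{-q/p}\le\sum_{k}(\sigma_k|S_k|)^{-q'/p'}$ and then ``raises both sides to a negative power.'' As you note, the termwise inequality needs $\sigma_k|S_k|\ge 1$ for every $k$, and positivity of $\Mod_{p,\sigma}(\Gamma_\infty)$ only forces $\sigma_k|S_k|\to\infty$, i.e.\ the bound for large $k$; moreover the final exponentiation step silently compares quantities raised to the two different outer exponents $-p/q$ and $-p'/q'$. Your first route sidesteps all of this: the admissible set does not depend on $p$, the radially symmetric extremal from Theorem~\ref{thm:mod-opt-den} satisfies $\sum_k\rho_k^*=1$, hence $0\le\rho_k^*\le 1$, so $\mathcal{E}_{p',\sigma}(\rho^*)\le\mathcal{E}_{p,\sigma}(\rho^*)=\Mod_{p,\sigma}(\Gamma_\infty)$, and feasibility of $\rho^*$ for the $p'$-problem finishes the argument; this uses only the existence of a normalized optimal density, not the series formula itself. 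Your second route is best viewed as the paper's approach repaired: writing the modulus as an $\ell^{1/(p-1)}$ quasi-norm of $\bigl((\sigma_k|S_k|)^{-1}\bigr)_k$ and invoking monotonicity of $\ell^s$ quasi-norms in $s$ (valid for all $s>0$) gives the comparison with the correct exponents and without any lower bound on $\sigma_k|S_k|$. Both of your arguments are sound; the only caveat worth recording is that, like the paper's formula~\eqref{eq:mod-formula-inf-family} on which they rely, they are stated in the radially symmetric setting, which is the intended scope of the lemma.
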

The $p$-modulus and optimal density as a function of parameter $p$ is continuous.

\begin{theorem}\label{thm:symm-mod-cont}
Suppose $\Mod_{p,\sigma}(\Gamma_\infty)>0$ for all $p$ in a neighborhood of $p_0\in(1,\infty)$, then the function $p\mapsto\Mod_{p,\sigma}(\Gamma_\infty)$ is continuous at $p_0$.
\end{theorem}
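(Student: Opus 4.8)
The plan is to work directly with the closed-form series representation~\eqref{eq:mod-formula-inf-length}—more precisely, with
\[
\Mod_{p,\sigma}(\Gamma_\infty)=\left(\sum_{k=1}^\infty a_k^{\,q/p}\right)^{-p/q},\qquad a_k:=(\sigma_k|S_k|)^{-1},
\]
valid for radially symmetric trees (by Lemma~\ref{lem:rad-sym} it suffices to consider such trees, and for general proper infinite trees one can reduce to this case or argue via monotonicity). Write $f(p):=\sum_{k=1}^\infty a_k^{\,q(p)/p}$ where $q(p)=p/(p-1)$, so that $q/p=1/(p-1)$. Then $\Mod_{p,\sigma}(\Gamma_\infty)=f(p)^{-p/q}=f(p)^{-(p-1)}$, and since $x\mapsto x^{-(p-1)}$ is jointly continuous in $(x,p)$ on $(0,\infty)\times(1,\infty)$, it is enough to prove that $f$ is finite and continuous at $p_0$. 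Finiteness of $f$ on a neighborhood of $p_0$ is exactly the hypothesis $\Mod_{p,\sigma}(\Gamma_\infty)>0$ there.

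First I would fix a closed interval $[p_1,p_2]\subseteq(1,\infty)$ containing $p_0$ in its interior on which $\Mod_{p,\sigma}(\Gamma_\infty)>0$, hence $f(p_2)=\sum_k a_k^{1/(p_2-1)}<\infty$. The key observation is a domination: the exponent $1/(p-1)$ is decreasing in $p$, so for $p\in[p_1,p_2]$ and for every index $k$ with $a_k\le 1$ we have $a_k^{1/(p-1)}\le a_k^{1/(p_2-1)}$, while there can be only finitely many $k$ with $a_k>1$ (since even the convergent series $\sum_k a_k^{1/(p_2-1)}$ forces $a_k\to 0$). On that finite set of "bad" indices each term $p\mapsto a_k^{1/(p-1)}$ is continuous and bounded on $[p_1,p_2]$, so they contribute a finite continuous function; on the cofinite set of "good" indices the terms are dominated by the summable sequence $a_k^{1/(p_2-1)}$ uniformly in $p\in[p_1,p_2]$. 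By the Weierstrass $M$-test the series $f(p)=\sum_k a_k^{1/(p-1)}$ converges uniformly on $[p_1,p_2]$; since each term is continuous in $p$, $f$ is continuous on $[p_1,p_2]$, in particular at $p_0$.

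Finally I would assemble the pieces: $f$ is continuous and strictly positive at $p_0$ (it is a sum of nonnegative terms including at least $a_1^{1/(p_0-1)}>0$), so $\Mod_{p,\sigma}(\Gamma_\infty)=f(p)^{-(p-1)}$ is a composition and product of functions continuous at $p_0$, hence continuous there. The main obstacle is the non-uniformity one naively fears when the exponent $q/p$ varies with $p$: a term-by-term bound valid for all $k$ simultaneously is not available unless one separates out the finitely many indices where $a_k>1$. Handling that split cleanly—and observing that summability at the right endpoint $p_2$ both bounds the tail and forces $a_k\to0$—is the crux; once that is in place the Weierstrass $M$-test does the rest. (If one wants the stronger statement that the optimal density varies continuously, one argues similarly from formula~\eqref{eq:opt-symm-rho}, noting the numerator and denominator are continuous in $p$ by the same uniform-convergence argument and the denominator stays bounded away from $0$ near $p_0$.)
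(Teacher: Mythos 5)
Your proposal is correct and takes essentially the same route as the paper: both work from the series formula \eqref{eq:mod-formula-inf-family}, fix a closed interval around $p_0$ on which the modulus is positive, and dominate the terms $\left(\sigma_k|S_k|\right)^{-1/(p-1)}$ uniformly in $p$ by the convergent series at the right endpoint so as to pass to the limit term by term --- your Weierstrass $M$-test after splitting off the finitely many indices with $\sigma_k|S_k|<1$ is the same device as the paper's normalization $\sigma_k|S_k|\ge c$ followed by Tannery's theorem, and your joint continuity of $(x,p)\mapsto x^{-(p-1)}$ replaces the paper's logarithm step. The only blemish is the citation of a nonexistent equation label, but the formula you write is the correct one.
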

\begin{proof}
Suppose the modulus is positive in a closed interval $[p_0-\epsilon,p_0+\epsilon]\subset(1,\infty)$ with $\epsilon>0$ and let $p$ belong to this interval.  The fact that $\Mod_{p,\sigma}(\Gamma_\infty)>0$ implies that the infinite sum converges.  Let $q_0$ and $q$ be the H\"older conjugate exponents of $p_0$ and $p$, respectively.  The limit test implies that $\sigma_k|S_k|$ diverge to $+\infty$ as $k\to\infty$.  Thus, there exists a constant $c>0$ such that $\sigma_k|S_k|\ge c$ for all $k$.  From~\eqref{eq:mod-formula-inf-family}, we have
\begin{equation}\label{eq:mod-p-pre-tannery}
\Mod_{p,\sigma}(\Gamma_\infty)^{-\frac{1}{p-1}}
= \sum_{k=1}^\infty\left(\sigma_k|S_k|\right)^{-\frac{1}{p-1}}
= c^{-\frac{1}{p-1}}\sum_{k=1}^\infty\left(\frac{\sigma_k|S_k|}{c}\right)^{-\frac{1}{p-1}}.
\end{equation}
To pass to the limit as $p\to p_0$, we verify the conditions of Tannery's theorem \cite{loyapaul}.

Since $\sigma_k|S_k|/c\ge 1$ for all $k$, we have that
\begin{equation*}
\left(\frac{\sigma_k|S_k|}{c}\right)^{-\frac{1}{p-1}} \le
\left(\frac{\sigma_k|S_k|}{c}\right)^{-\frac{1}{p_0+\epsilon-1}}.
\end{equation*}
By assumption,
\begin{equation*}
\sum_{k=1}^\infty\left(\frac{\sigma_k|S_k|}{c}\right)^{-\frac{1}{p_0+\epsilon-1}}
= c^{\frac{1}{p_0+\epsilon-1}}\Mod_{p_0+\epsilon,\sigma}(\Gamma_\infty)^{-\frac{1}{p_0+\epsilon-1}}.
\end{equation*}
Applying Tannery's theorem, we may pass to the limit in~\eqref{eq:mod-p-pre-tannery}, obtaining
\begin{equation*}
\lim_{p\to p_0}\Mod_{p,\sigma}(\Gamma_\infty)^{-\frac{1}{p-1}}
= c^{-\frac{1}{p_0-1}}\sum_{k=1}^\infty\left(\frac{\sigma_k|S_k|}{c}\right)^{-\frac{1}{p_0-1}}
= \Mod_{p_0,\sigma}(\Gamma_\infty)^{-\frac{1}{p_0-1}}.
\end{equation*}

Using the positivity of modulus near $p_0$ and the continuity of the logarithm, it follows that
\begin{equation*}
\lim_{p\to p_0}\left(
-\frac{1}{p-1}\log\Mod_{p,\sigma}(\Gamma_\infty)
\right)
= -\frac{1}{p_0-1}\log\Mod_{p_0,\sigma}(\Gamma_\infty),
\end{equation*}
from which it follows that
\begin{equation*}
\lim_{p\to p_0}\Mod_{p,\sigma}(\Gamma_\infty)
= \Mod_{p_0,\sigma}(\Gamma_\infty).
\end{equation*}
\end{proof}

\begin{theorem}
Suppose $\Mod_{p,\sigma}(\Gamma_\infty)>0$ for all $p$ in a neighborhood of $p_0\in(1,\infty)$ and let $e\in E$, then the function $p\mapsto\rho^*_p(e)$, is continuous at $p_0$, where $\rho^*_p$ is the unique optimal density for $p$-modulus.
\end{theorem}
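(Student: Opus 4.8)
The plan is to reduce everything to the closed-form expression for the extremal density on a radially symmetric tree and then invoke the continuity of the modulus already proved in Theorem~\ref{thm:symm-mod-cont}. For $p$ in the given neighborhood of $p_0$ the modulus is positive, so by Theorem~\ref{thm:mod-opt-den} an optimal density exists; it is unique by strict convexity of $\mathcal{E}_{p,\sigma}$ for $1<p<\infty$, and radially symmetric by Lemma~\ref{lem:rad-sym}. Hence, fixing $e\in E$ with $k=\gen(e)$ and recalling that the H\"older conjugate satisfies $q/p=1/(p-1)$, formula~\eqref{eq:opt-symm-rho} together with~\eqref{eq:mod-formula-inf-family} gives
\begin{equation*}
\rho^*_p(e)
= \frac{\left(\sigma_k|S_k|\right)^{-\frac{1}{p-1}}}
{\sum_{\ell=1}^\infty\left(\sigma_\ell|S_\ell|\right)^{-\frac{1}{p-1}}}
= \left(\sigma_k|S_k|\right)^{-\frac{1}{p-1}}\,\Mod_{p,\sigma}(\Gamma_\infty)^{\frac{1}{p-1}}.
\end{equation*}
It therefore suffices to show that each of the two factors on the right is continuous at $p_0$.

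First I would handle the numerator factor $p\mapsto\left(\sigma_k|S_k|\right)^{-1/(p-1)}$: since the tree is locally finite, $\sigma_k|S_k|$ is a fixed positive real, and this map is the composition of the continuous map $p\mapsto 1/(p-1)$ on $(1,\infty)$ with the continuous function $t\mapsto\left(\sigma_k|S_k|\right)^{-t}$, hence continuous. For the factor $p\mapsto\Mod_{p,\sigma}(\Gamma_\infty)^{1/(p-1)}$, I would invoke Theorem~\ref{thm:symm-mod-cont}, which under exactly the standing hypothesis gives continuity of $p\mapsto\Mod_{p,\sigma}(\Gamma_\infty)$ at $p_0$; moreover Lemma~\ref{lemma:p2bounded} together with local finiteness shows $\Mod_{p_0,\sigma}(\Gamma_\infty)$ is a finite positive number. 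Since $(x,s)\mapsto x^{s}$ is continuous on $(0,\infty)\times\mathbb{R}$ and $p\mapsto\bigl(\Mod_{p,\sigma}(\Gamma_\infty),\,1/(p-1)\bigr)$ is continuous at $p_0$ with value in $(0,\infty)\times\mathbb{R}$, the composition is continuous at $p_0$. (Equivalently, the denominator $\sum_{\ell\ge1}\left(\sigma_\ell|S_\ell|\right)^{-1/(p-1)}=\Mod_{p,\sigma}(\Gamma_\infty)^{-1/(p-1)}$ was already shown to be continuous and, being equal to a finite positive limit, nonzero at $p_0$ inside the proof of Theorem~\ref{thm:symm-mod-cont} via Tannery's theorem, so the quotient defining $\rho^*_p(e)$ is continuous there.) Multiplying the two factors yields continuity of $p\mapsto\rho^*_p(e)$ at $p_0$.

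I do not expect a genuine obstacle here: the only delicate point is the interchange of a limit with the infinite series appearing in the denominator, and that is already furnished verbatim by the Tannery's-theorem argument in Theorem~\ref{thm:symm-mod-cont}; the remaining content is elementary continuity of finitely many compositions and products. The one thing worth stating carefully in the write-up is the chain of reductions at the start---optimal density exists (Theorem~\ref{thm:mod-opt-den}), is unique (strict convexity), and is radially symmetric (Lemma~\ref{lem:rad-sym})---so that the explicit formula~\eqref{eq:opt-symm-rho} legitimately applies to $\rho^*_p$.
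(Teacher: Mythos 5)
Your proposal is correct and follows essentially the same route as the paper: both rewrite $\rho^*_p(e)$ via the explicit formula~\eqref{eq:opt-symm-rho} as $\left(\sigma_k|S_k|/\Mod_{p,\sigma}(\Gamma_\infty)\right)^{-\frac{1}{p-1}}$ and then deduce continuity at $p_0$ from Theorem~\ref{thm:symm-mod-cont} together with continuity of the power map for positive base. Your additional remarks on existence, uniqueness, radial symmetry, and the positivity and finiteness of the modulus at $p_0$ are consistent with the paper and simply make explicit what the paper leaves implicit.
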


\begin{proof}
Suppose that $e\in S_k$.  The formula in~\eqref{eq:opt-symm-rho} shows that
\begin{equation*}
\rho^*_p(e) = \rho^*_{p,k} :=
\left(
\frac{\sigma_k|S_k|}
{\Mod_{p,\sigma}(\Gamma_\infty)}
\right)^{-\frac{1}{p-1}}.
\end{equation*}
Continuity follows from Theorem~\ref{thm:symm-mod-cont} and the fact that the function $p\mapsto x^{-\frac{1}{p-1}}$ is continuous on $(1,\infty)$ for positive $x$.
\end{proof}
For $\theta >0$ and a straightforward simplification in the formula \eqref{eq:mod-formula-inf-family} shows that the $p$-modulus will be scaled if weights are scaled by some positive numbers.
\begin{lemma}\label{lemma:scaledelta}
Suppose $\Mod_{p, \sigma}(\Gamma_\infty)>0$ for some $1 < p< \infty$ and let $\theta>0$. Then
\begin{equation*}
    \Mod_{p,\theta\sigma}(\Gamma_\infty)=\theta\Mod_{p, \sigma}(\Gamma_\infty)
\end{equation*}
\end{lemma}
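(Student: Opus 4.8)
The plan is to read off the scaling directly from the closed-form series in~\eqref{eq:mod-formula-inf-family}, but I will first observe that the statement is in fact immediate already from the optimization definition~\eqref{defn:p-modolus}. The admissibility constraint $\ell_\rho(\gamma)\ge 1$ for all $\gamma\in\Gamma_\infty$ involves only $\rho$ and not the weight, so the feasible set $\Adm(\Gamma_\infty)$ is unchanged when $\sigma$ is replaced by $\theta\sigma$. For $1\le p<\infty$,
\begin{equation*}
\mathcal{E}_{p,\theta\sigma}(\rho)=\sum_{e\in E}\theta\sigma(e)\rho(e)^p=\theta\sum_{e\in E}\sigma(e)\rho(e)^p=\theta\,\mathcal{E}_{p,\sigma}(\rho),
\end{equation*}
so minimizing over the common feasible set scales the optimal value by exactly $\theta$. (This argument needs neither radial symmetry nor positivity of the modulus, and works verbatim for $p=\infty$ as well.)

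For the route that matches the stated hypotheses, I would use~\eqref{eq:mod-formula-inf-family} directly:
\begin{equation*}
\Mod_{p,\theta\sigma}(\Gamma_\infty)=\left(\sum_{k=1}^\infty\left(\theta\sigma_k|S_k|\right)^{-\frac{q}{p}}\right)^{-\frac{p}{q}}=\left(\theta^{-\frac{q}{p}}\sum_{k=1}^\infty\left(\sigma_k|S_k|\right)^{-\frac{q}{p}}\right)^{-\frac{p}{q}}.
\end{equation*}
Pulling the constant $\theta^{-q/p}$ out of the sum is legitimate because every summand is nonnegative, and the hypothesis $\Mod_{p,\sigma}(\Gamma_\infty)>0$ guarantees the remaining series is finite. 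Raising to the power $-p/q$ converts the prefactor into $\theta^{(q/p)(p/q)}=\theta$, giving
\begin{equation*}
\Mod_{p,\theta\sigma}(\Gamma_\infty)=\theta\left(\sum_{k=1}^\infty\left(\sigma_k|S_k|\right)^{-\frac{q}{p}}\right)^{-\frac{p}{q}}=\theta\,\Mod_{p,\sigma}(\Gamma_\infty).
\end{equation*}

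There is essentially no obstacle; the only point deserving a word of care is the bookkeeping of the exponents $q/p$ and $p/q$ together with the minus sign, and the check that the manipulation of the infinite series is permissible — both handled by nonnegativity of the summands and the assumed convergence. As an optional remark one can also note the effect on the extremal density: in~\eqref{eq:opt-symm-rho} the density $\rho_k$ depends on $\sigma$ only through the ratio $(\sigma_k|S_k|)^{-q/p}\big/\sum_\ell(\sigma_\ell|S_\ell|)^{-q/p}$, in which the factor $\theta^{-q/p}$ cancels, so the optimal density is unchanged under the scaling — consistent with the energy merely scaling by $\theta$.
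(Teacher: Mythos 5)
Your proposal is correct, and its second half is exactly the paper's route: the paper justifies Lemma~\ref{lemma:scaledelta} with a one-line appeal to a ``straightforward simplification'' of the series formula~\eqref{eq:mod-formula-inf-family}, which is precisely your computation pulling $\theta^{-q/p}$ out of the sum and raising to the power $-p/q$; your bookkeeping of the exponents and the use of positivity to guarantee convergence are fine, and in fact the identity also holds trivially when the series diverges, since then both moduli are zero under the paper's convention. Your first argument, however, is genuinely different from and stronger than what the paper records: since $\Adm(\Gamma_\infty)$ does not involve $\sigma$ and $\mathcal{E}_{p,\theta\sigma}(\rho)=\theta\,\mathcal{E}_{p,\sigma}(\rho)$ (with $\max$ in place of the sum when $p=\infty$), the infimum over the common feasible set scales by $\theta$ directly from the definition~\eqref{defn:p-modolus}. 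That version needs neither radial symmetry, nor the formula~\eqref{eq:mod-formula-inf-family}, nor the hypothesis $\Mod_{p,\sigma}(\Gamma_\infty)>0$, and it covers $p=1$ and $p=\infty$ as well; what the paper's series route buys in exchange is only the explicit closed form and the companion observation (which you also note) that the extremal density in~\eqref{eq:opt-symm-rho} is unchanged under the scaling, since the factor $\theta^{-q/p}$ cancels between numerator and denominator.
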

In case of $\sigma=\sigma_1+\sigma_2$, then following inequality occurs. 
\begin{lemma}\label{lemma:deltasum}
Let $\Mod_{p,\sigma}(\Gamma_\infty)>0$ for  $1<p<\infty$, and let $\sigma^1,\sigma^2: E \to \mathbb{R}_{>0}$. Then $$\Mod_{p, \sigma^1+\sigma^2}(\Gamma_\infty) \ge \Mod_{p, \sigma_1}(\Gamma_\infty)+\Mod_{p, \sigma_2}(\Gamma_\infty).$$
\end{lemma}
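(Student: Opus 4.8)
The plan is to exploit the single most convenient feature of the modulus problem: the admissibility constraint $\ell_\rho(\gamma)\ge 1$ for all $\gamma\in\Gamma_\infty$ makes no reference to the weight, so one and the same density may be fed into the three optimization problems defining $\Mod_{p,\sigma^1}(\Gamma_\infty)$, $\Mod_{p,\sigma^2}(\Gamma_\infty)$ and $\Mod_{p,\sigma^1+\sigma^2}(\Gamma_\infty)$. Combined with the trivial identity $\mathcal{E}_{p,\sigma^1+\sigma^2}(\rho)=\mathcal{E}_{p,\sigma^1}(\rho)+\mathcal{E}_{p,\sigma^2}(\rho)$ --- valid for every $\rho\in\mathbb{R}^E_{\ge 0}$ when $1<p<\infty$ because the series have nonnegative terms and may therefore be split and rearranged freely --- this yields the inequality almost immediately.

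Concretely, I would fix $\epsilon>0$ and choose $\rho_\epsilon\in\Adm(\Gamma_\infty)$ with $\mathcal{E}_{p,\sigma^1+\sigma^2}(\rho_\epsilon)\le\Mod_{p,\sigma^1+\sigma^2}(\Gamma_\infty)+\epsilon$; such a density exists because $\Mod_{p,\sigma^1+\sigma^2}(\Gamma_\infty)$ is finite (for instance, the indicator density $\mathbf{1}_{S_1}$ is admissible, so the modulus is at most the total weight of the first shell, or invoke Lemma~\ref{lemma:p2bounded}). Since admissibility is weight-free, $\rho_\epsilon$ is feasible for the problem defining $\Mod_{p,\sigma^j}(\Gamma_\infty)$, so $\Mod_{p,\sigma^j}(\Gamma_\infty)\le\mathcal{E}_{p,\sigma^j}(\rho_\epsilon)$ for $j=1,2$. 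Adding these two inequalities and using the energy identity gives $\Mod_{p,\sigma^1}(\Gamma_\infty)+\Mod_{p,\sigma^2}(\Gamma_\infty)\le\mathcal{E}_{p,\sigma^1+\sigma^2}(\rho_\epsilon)\le\Mod_{p,\sigma^1+\sigma^2}(\Gamma_\infty)+\epsilon$, and letting $\epsilon\downarrow 0$ finishes the proof. Alternatively, under the hypothesis $\Mod_{p,\sigma^1+\sigma^2}(\Gamma_\infty)>0$ one may take $\rho_\epsilon$ to be the extremal density supplied by Theorem~\ref{thm:mod-opt-den} and drop the $\epsilon$ altogether.

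I do not expect a genuine obstacle; the only points deserving a sentence of care are the bookkeeping with the infinite edge set. First, $\mathcal{E}_{p,\sigma^j}(\rho_\epsilon)\le\mathcal{E}_{p,\sigma^1+\sigma^2}(\rho_\epsilon)<\infty$ since $\sigma^j\le\sigma^1+\sigma^2$ pointwise, so every term in the chain of inequalities is a finite real number and not a vacuous $+\infty$. Second, the additivity $\mathcal{E}_{p,\sigma^1+\sigma^2}(\rho)=\mathcal{E}_{p,\sigma^1}(\rho)+\mathcal{E}_{p,\sigma^2}(\rho)$ must be justified term-by-term over the possibly infinite set $E$, which is legitimate precisely because all the summands $\sigma^j(e)\rho(e)^p$ are nonnegative. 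Finally, note the case $p=\infty$ is excluded from the statement, which is what lets us use additivity of the energy in place of the non-additive max-energy.
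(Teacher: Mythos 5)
Your argument is correct. The paper itself states Lemma~\ref{lemma:deltasum} without a written proof, in a context (immediately after the scaling lemma) suggesting it is meant to be read off the explicit series formula~\eqref{eq:mod-formula-inf-family} for radially symmetric trees, which reduces the claim to the superadditivity inequality $\bigl(\sum_k(x_k+y_k)^{-q/p}\bigr)^{-p/q}\ge\bigl(\sum_k x_k^{-q/p}\bigr)^{-p/q}+\bigl(\sum_k y_k^{-q/p}\bigr)^{-p/q}$ applied with $x_k=\sigma^1_k|S_k|$ and $y_k=\sigma^2_k|S_k|$, a reverse-Minkowski-type inequality that still requires its own justification. Your route works instead directly from the variational definition: admissibility is weight-free, the $p$-energy is additive in the weight, and a near-optimal density for $\sigma^1+\sigma^2$ serves as a test density for both $\sigma^1$ and $\sigma^2$. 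This buys genuine generality: it needs neither radial symmetry of $\sigma^1,\sigma^2$, nor the formula, nor in fact the hypothesis $\Mod_{p,\sigma}(\Gamma_\infty)>0$ (if the left-hand side vanishes, the pointwise bound $\sigma^j\le\sigma^1+\sigma^2$ forces both terms on the right to vanish as well), and your two points of care --- finiteness of the modulus via $\mathbf{1}_{S_1}$ or Lemma~\ref{lemma:p2bounded}, and term-by-term additivity of the nonnegative energy series over the infinite edge set --- are exactly the ones that matter. The only caveat is your optional shortcut through Theorem~\ref{thm:mod-opt-den}: that existence result is established via the radially symmetric formula, so for general weights you should keep the $\epsilon$-approximation argument rather than appeal to an extremal density.
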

The modulus as function of the edge weights $\sigma$ provides the concavity property. The Lemmas~\eqref{lemma:scaledelta} and~\eqref{lemma:deltasum} provides concavity property of modulus in the infinite tree. 
  
\begin{theorem}\label{theorem:concave}
Let $1 < p< \infty$ and suppose $\Mod_{p, \sigma}(\Gamma_\infty)>0$. The function $\sigma \mapsto \Mod_{p, \sigma}(\Gamma_\infty)$ is concave. 
\end{theorem}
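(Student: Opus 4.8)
The plan is to deduce concavity directly from the two preceding lemmas, which together encode exactly the positive-homogeneity and superadditivity of the map $\sigma\mapsto\Mod_{p,\sigma}(\Gamma_\infty)$. Recall that a nonnegative function on a convex cone that is positively homogeneous of degree one and superadditive is automatically concave; I would simply instantiate this standard fact. First I would fix $1<p<\infty$, two weights $\sigma^1,\sigma^2:E\to\mathbb{R}_{>0}$ with positive modulus, and a parameter $t\in[0,1]$, and consider the convex combination $\sigma = t\sigma^1+(1-t)\sigma^2$. The goal is the inequality
\begin{equation*}
\Mod_{p,\,t\sigma^1+(1-t)\sigma^2}(\Gamma_\infty)\;\ge\; t\,\Mod_{p,\sigma^1}(\Gamma_\infty)+(1-t)\,\Mod_{p,\sigma^2}(\Gamma_\infty).
\end{equation*}

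The key steps, in order: (1) apply Lemma~\ref{lemma:deltasum} with the pair of weights $t\sigma^1$ and $(1-t)\sigma^2$ (whose sum is $\sigma$), which gives
\begin{equation*}
\Mod_{p,\,t\sigma^1+(1-t)\sigma^2}(\Gamma_\infty)\;\ge\;\Mod_{p,\,t\sigma^1}(\Gamma_\infty)+\Mod_{p,\,(1-t)\sigma^2}(\Gamma_\infty);
\end{equation*}
(2) apply Lemma~\ref{lemma:scaledelta} twice, with $\theta=t$ and $\theta=1-t$ respectively, to rewrite $\Mod_{p,t\sigma^1}(\Gamma_\infty)=t\,\Mod_{p,\sigma^1}(\Gamma_\infty)$ and $\Mod_{p,(1-t)\sigma^2}(\Gamma_\infty)=(1-t)\,\Mod_{p,\sigma^2}(\Gamma_\infty)$; (3) combine. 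One small bookkeeping point is that Lemma~\ref{lemma:scaledelta} is stated for $\theta>0$, so the degenerate endpoints $t=0$ and $t=1$ should be treated separately, but there the claimed inequality is trivial (it reduces to an equality). Another point worth a sentence is that the hypothesis $\Mod_{p,\sigma}(\Gamma_\infty)>0$ should be understood to hold for the relevant weights so that the cited lemmas apply; via the series formula~\eqref{eq:mod-formula-inf-family}, positivity for $\sigma^1$ and $\sigma^2$ is inherited by $t\sigma^1+(1-t)\sigma^2$ since the summands $\big((t\sigma^1_k+(1-t)\sigma^2_k)|S_k|\big)^{-q/p}$ are dominated by a constant times $(\sigma^1_k|S_k|)^{-q/p}$.

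Honestly, there is no real obstacle here: the content is entirely in Lemmas~\ref{lemma:scaledelta} and~\ref{lemma:deltasum}, and this theorem is the formal packaging of them into the concavity statement. The only thing requiring the slightest care is making sure the scaling-and-summing argument is applied to the right decomposition $\sigma = (t\sigma^1)+((1-t)\sigma^2)$ rather than to $\sigma^1+\sigma^2$ directly, and handling the $\theta=0$ edge case outside the scope of Lemma~\ref{lemma:scaledelta}. I would write the proof in three or four lines.
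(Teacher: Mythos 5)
Your proposal is correct and is essentially the paper's own argument: the paper derives concavity precisely by combining the homogeneity of Lemma~\ref{lemma:scaledelta} with the superadditivity of Lemma~\ref{lemma:deltasum} applied to the decomposition $\sigma=(t\sigma^1)+((1-t)\sigma^2)$. Your extra remarks on the endpoint cases $t=0,1$ and the inheritance of positivity are reasonable bookkeeping that the paper leaves implicit.
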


If we bound the edge weights, then we have strong properties on the $p$-modulus. A weight $\sigma\in\mathbb{R}^E_{>0}$ is uniformly elliptic and bounded if there exists constants $\alpha_1>0$ and $\alpha_2>0$ such that $\alpha_1 \le \sigma \le \alpha_2$. The monotonicity property of the functions $x \to x^{-q/p}$ and $x \to x^{-p/q}$ and uniformly elliptic and bounded weights provide the proof of equivalent relation.
\begin{lemma}\label{lem:elliptic-estimate}
Suppose $\sigma$ is a radially symmetric weight that is bounded and uniformly elliptic with $0<\alpha_1\le\sigma\le \alpha_2$.  Then
\begin{equation}\label{eq:equiv_mod}
\alpha_1\Mod_{p,1}(\Gamma_\infty) \le \Mod_{p,\sigma}(\Gamma_\infty) \le \alpha_2\Mod_{p,1}(\Gamma_\infty).
\end{equation}
\end{lemma}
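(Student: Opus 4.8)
The plan is to read the two-sided bound directly off the closed-form expression for the modulus. By~\eqref{eq:mod-formula-inf-family}, for $p\in(1,\infty)$ with H\"older conjugate exponent $q$ we have $\Mod_{p,\sigma}(\Gamma_\infty)=\bigl(\sum_{k=1}^\infty(\sigma_k|S_k|)^{-q/p}\bigr)^{-p/q}$, and specializing to $\sigma\equiv 1$ gives $\Mod_{p,1}(\Gamma_\infty)=\bigl(\sum_{k=1}^\infty|S_k|^{-q/p}\bigr)^{-p/q}$. The ellipticity hypothesis says $\alpha_1\le\sigma_k\le\alpha_2$ for every shell index $k$, so that $\alpha_1|S_k|\le\sigma_k|S_k|\le\alpha_2|S_k|$.

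First I would apply the decreasing map $x\mapsto x^{-q/p}$ term by term --- it is decreasing on $(0,\infty)$ since $q/p>0$ --- and pull the constant out of each term using $(\alpha|S_k|)^{-q/p}=\alpha^{-q/p}|S_k|^{-q/p}$. Summing over $k$ yields
\begin{equation*}
\alpha_2^{-q/p}\sum_{k=1}^\infty|S_k|^{-q/p}\ \le\ \sum_{k=1}^\infty(\sigma_k|S_k|)^{-q/p}\ \le\ \alpha_1^{-q/p}\sum_{k=1}^\infty|S_k|^{-q/p},
\end{equation*}
which holds whether the series converge or diverge. I would then apply the decreasing map $x\mapsto x^{-p/q}$ (decreasing since $p/q>0$), which reverses all three inequalities, and simplify the outer terms via $\bigl(\alpha^{-q/p}\sum_{k=1}^\infty|S_k|^{-q/p}\bigr)^{-p/q}=\alpha\bigl(\sum_{k=1}^\infty|S_k|^{-q/p}\bigr)^{-p/q}=\alpha\,\Mod_{p,1}(\Gamma_\infty)$. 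This produces exactly~\eqref{eq:equiv_mod}.

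I do not expect a real obstacle here; the one point to keep track of is the degenerate regime $\sum_{k=1}^\infty|S_k|^{-q/p}=\infty$, in which all three quantities in~\eqref{eq:equiv_mod} equal $0$ and the estimate is trivial under the convention --- adopted just after~\eqref{eq:mod-formula-inf-family} --- that the reciprocal of a divergent series is zero; the computation above respects this convention automatically. If one also wants the endpoint exponents $p=1$ or $p=\infty$, where~\eqref{eq:mod-formula-inf-family} is not available, the same inequality follows instead from the obvious monotonicity of $\mathcal{E}_{p,\sigma}(\rho)$ in $\sigma$, which gives $\Mod_{p,\alpha_1}(\Gamma_\infty)\le\Mod_{p,\sigma}(\Gamma_\infty)\le\Mod_{p,\alpha_2}(\Gamma_\infty)$, together with the scaling identity $\Mod_{p,\theta\sigma}(\Gamma_\infty)=\theta\,\Mod_{p,\sigma}(\Gamma_\infty)$ (Lemma~\ref{lemma:scaledelta}, and immediate from the defining optimization problem for the endpoint $p$ as well) applied with $\theta=\alpha_i$ and $\sigma\equiv 1$.
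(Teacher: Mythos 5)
Your argument is correct and is essentially the paper's own: the paper proves this lemma precisely by applying the monotonicity of $x\mapsto x^{-q/p}$ and $x\mapsto x^{-p/q}$ to the series formula~\eqref{eq:mod-formula-inf-family} with $\alpha_1\le\sigma_k\le\alpha_2$, exactly as you do. Your extra remarks on the divergent case and the endpoint exponents via Lemma~\ref{lemma:scaledelta} are sound but not needed beyond what the paper intends.
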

The immediate consequences of the Lemma~\ref{lem:elliptic-estimate} are $p$-modulus on a weighted tree is equivalent to $p$-modulus on the unweighted tree and $1$-modulus is always positive. 

\begin{corollary}\label{cor:sigma-unit-iff}
Suppose $\sigma$ is a radially symmetric weight that is bounded and uniformly elliptic, then $\Mod_{p,\sigma}(\Gamma_\infty)>0$ if and only if $\Mod_{p,1}(\Gamma_\infty)>0$.
\end{corollary}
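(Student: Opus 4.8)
The plan is to read the corollary off directly from the two-sided comparison in Lemma~\ref{lem:elliptic-estimate}. By hypothesis $\sigma$ is bounded and uniformly elliptic, so there are constants $0<\alpha_1\le\alpha_2$ with $\alpha_1\le\sigma\le\alpha_2$, and the lemma supplies
\[
\alpha_1\Mod_{p,1}(\Gamma_\infty)\le\Mod_{p,\sigma}(\Gamma_\infty)\le\alpha_2\Mod_{p,1}(\Gamma_\infty).
\]
Everything then reduces to the observation that multiplication by a strictly positive constant preserves the dichotomy ``zero versus strictly positive.''

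First I would prove the reverse implication: assuming $\Mod_{p,1}(\Gamma_\infty)>0$, the left-hand inequality gives $\Mod_{p,\sigma}(\Gamma_\infty)\ge\alpha_1\Mod_{p,1}(\Gamma_\infty)$, and since $\alpha_1>0$ the product on the right is strictly positive, hence $\Mod_{p,\sigma}(\Gamma_\infty)>0$. For the forward implication, assume $\Mod_{p,\sigma}(\Gamma_\infty)>0$; the right-hand inequality gives $\alpha_2\Mod_{p,1}(\Gamma_\infty)\ge\Mod_{p,\sigma}(\Gamma_\infty)>0$, so dividing through by $\alpha_2>0$ yields $\Mod_{p,1}(\Gamma_\infty)>0$. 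Combining the two implications gives the stated biconditional. It is also worth noting the degenerate case is consistent: when $\Mod_{p,1}(\Gamma_\infty)=0$ (equivalently, the series in~\eqref{eq:mod-formula-inf-family} diverges) all three terms in the chain collapse to $0$ and the equivalence is vacuous on both sides.

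I do not expect any genuine obstacle here, since all the analytic content is already absorbed into Lemma~\ref{lem:elliptic-estimate}, whose proof uses the monotonicity of $x\mapsto x^{-q/p}$ and $x\mapsto x^{-p/q}$ together with the series formula~\eqref{eq:mod-formula-inf-family}. The only point deserving a sentence of care is the role of the endpoints $p=1$ and $p=\infty$: if one wants the corollary for those exponents as well, one should invoke the corresponding scaling behavior (for $p=1$ via $\inf_k\sigma(S_k)$ as in Corollary~\ref{coro:lowerboundMod1} and Theorem~\ref{theorem:min-cut radially}, and for $p=\infty$ via $\ell_{\sigma^{-1}}(\Gamma_\infty)$ as in Theorem~\ref{thm:mod_infity}), each of which is again sandwiched between $\alpha_1$ and $\alpha_2$ times the unweighted quantity, so the same two-line argument applies verbatim.
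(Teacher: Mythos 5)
Your argument is exactly the paper's: the corollary is read off directly from the two-sided comparison $\alpha_1\Mod_{p,1}(\Gamma_\infty)\le\Mod_{p,\sigma}(\Gamma_\infty)\le\alpha_2\Mod_{p,1}(\Gamma_\infty)$ of Lemma~\ref{lem:elliptic-estimate}, with the left inequality giving one implication and the right the other. The proposal is correct and matches the paper's intended (immediate-consequence) proof.
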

\begin{corollary}\label{thm:1-mod-positive}
Suppose $\sigma$ is a radially symmetric weight that is bounded and uniformly elliptic, then $\Mod_{1,\sigma}(\Gamma_\infty)>0$.
\end{corollary}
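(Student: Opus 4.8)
The plan is to reduce the claim to the unweighted case, where the $1$-modulus is already pinned down exactly, and then transfer positivity across the ellipticity bound. First I would invoke Lemma~\ref{lem:cut-inequality}, which identifies $\Mod_{1,1}(\Gamma_\infty)$ with $|S_1|$. Since the tree is proper, $C(e)\ge 1$ for every edge, so in particular the root has at least one child and $S_1\ne\emptyset$; hence $\Mod_{1,1}(\Gamma_\infty)=|S_1|\ge 1>0$.

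Next I would push this positivity into the weighted setting using the lower ellipticity estimate. The quickest route is the left-hand inequality of~\eqref{eq:equiv_mod}: with $0<\alpha_1\le\sigma$ one gets $\Mod_{1,\sigma}(\Gamma_\infty)\ge\alpha_1\Mod_{1,1}(\Gamma_\infty)=\alpha_1|S_1|>0$. Alternatively—and this sidesteps any dependence on the $1<p<\infty$ machinery used to prove Lemma~\ref{lem:elliptic-estimate}—one can argue straight from the definition of $1$-modulus: for any $\rho\in\Adm(\Gamma_\infty)$,
\begin{equation*}
\mathcal{E}_{1,\sigma}(\rho)=\sum_{e\in E}\sigma(e)\rho(e)\ge\alpha_1\sum_{e\in E}\rho(e)=\alpha_1\mathcal{E}_{1,1}(\rho)\ge\alpha_1\Mod_{1,1}(\Gamma_\infty),
\end{equation*}
and taking the infimum over admissible $\rho$ gives the same bound. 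Either way, $\Mod_{1,\sigma}(\Gamma_\infty)\ge\alpha_1|S_1|>0$.

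There is essentially no obstacle: the statement just repackages Lemma~\ref{lem:cut-inequality} together with the left inequality of Lemma~\ref{lem:elliptic-estimate}. The only point needing a moment's care is verifying that the elliptic comparison is legitimate at the endpoint $p=1$, since the convexity/H\"older-type argument sketched before Lemma~\ref{lem:elliptic-estimate} is phrased for $1<p<\infty$; the one-line energy comparison displayed above disposes of the $p=1$ case directly, so no new ideas are required. One could equally well cite Corollary~\ref{cor:sigma-unit-iff} to pass from $\Mod_{1,1}(\Gamma_\infty)>0$ to $\Mod_{1,\sigma}(\Gamma_\infty)>0$.
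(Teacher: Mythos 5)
Your argument is correct and matches the paper's intended route: the paper treats this corollary as an immediate consequence of the left inequality in Lemma~\ref{lem:elliptic-estimate} combined with Lemma~\ref{lem:cut-inequality}, exactly as you do. Your extra one-line energy comparison $\mathcal{E}_{1,\sigma}(\rho)\ge\alpha_1\mathcal{E}_{1,1}(\rho)$ is a sensible precaution that justifies the elliptic estimate at the endpoint $p=1$ directly, but it does not change the substance of the argument.
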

In the case of weights are radially symmetric that is bounded and uniformly elliptic, the Corollary~\ref{cor:infity_modulus} and the Lemma~\ref{lem:elliptic-estimate} implies the following theorem

\begin{theorem}
Suppose $\sigma$ is a radially symmetric weight that is bounded and uniformly elliptic, then 
\begin{equation*}
\lim_{p\to\infty}\Mod_{p,\sigma}(\Gamma_\infty)^{\frac{1}{p}} = 0 = \Mod_{\infty,\sigma}(\Gamma_\infty).
\end{equation*}
\end{theorem}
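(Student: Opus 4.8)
The plan is to deduce both assertions from results already established, treating the $\infty$-modulus and the limit $\lim_{p\to\infty}\Mod_{p,\sigma}(\Gamma_\infty)^{1/p}$ separately. For the equality $\Mod_{\infty,\sigma}(\Gamma_\infty)=0$ I would argue directly from Theorem~\ref{thm:mod_infity}. Uniform ellipticity gives $\sigma(e)^{-1}\ge\alpha_2^{-1}>0$ for every edge $e$, so for any $\gamma\in\Gamma_\infty$,
\begin{equation*}
\ell_{\sigma^{-1}}(\gamma)=\sum_{e\in\gamma}\sigma(e)^{-1}\ge\alpha_2^{-1}\sum_{e\in\gamma}1=+\infty,
\end{equation*}
since an infinite descending path contains infinitely many edges. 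Taking the infimum over $\gamma\in\Gamma_\infty$ yields $\ell_{\sigma^{-1}}(\Gamma_\infty)=+\infty$, and the second branch of Theorem~\ref{thm:mod_infity} then gives $\Mod_{\infty,\sigma}(\Gamma_\infty)=0$. (Equivalently, one could test the $\infty$-modulus with the admissible density $\rho=\epsilon\sigma^{-1}$, whose energy is $\epsilon$, and let $\epsilon\downarrow 0$, exactly as in the proof of Theorem~\ref{thm:mod_infity}.)

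For the limit, the idea is an elliptic sandwich followed by a squeeze. First I would invoke Lemma~\ref{lem:elliptic-estimate}: for each $p\in(1,\infty)$,
\begin{equation*}
\alpha_1\Mod_{p,1}(\Gamma_\infty)\le\Mod_{p,\sigma}(\Gamma_\infty)\le\alpha_2\Mod_{p,1}(\Gamma_\infty).
\end{equation*}
Since all three quantities are finite (by Lemma~\ref{lemma:p2bounded}) and nonnegative, raising to the power $1/p$ preserves the inequalities and produces
\begin{equation*}
\alpha_1^{1/p}\,\Mod_{p,1}(\Gamma_\infty)^{1/p}\le\Mod_{p,\sigma}(\Gamma_\infty)^{1/p}\le\alpha_2^{1/p}\,\Mod_{p,1}(\Gamma_\infty)^{1/p}.
\end{equation*}
Now I would let $p\to\infty$: because $\alpha_1,\alpha_2>0$ we have $\alpha_1^{1/p}\to 1$ and $\alpha_2^{1/p}\to 1$, while Corollary~\ref{cor:infity_modulus} supplies $\lim_{p\to\infty}\Mod_{p,1}(\Gamma_\infty)^{1/p}=0$. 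The squeeze theorem then forces $\lim_{p\to\infty}\Mod_{p,\sigma}(\Gamma_\infty)^{1/p}=0$, and combining this with the previous paragraph gives the full chain $\lim_{p\to\infty}\Mod_{p,\sigma}(\Gamma_\infty)^{1/p}=0=\Mod_{\infty,\sigma}(\Gamma_\infty)$.

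I do not expect a genuine obstacle here; the argument is essentially an $\epsilon$-perturbation of the unweighted statement of Corollary~\ref{cor:infity_modulus}. The only points worth a careful sentence are: (i) the sandwich of Lemma~\ref{lem:elliptic-estimate} must be applied for all $p$ in a neighbourhood of $+\infty$, including any $p$ with $\Mod_{p,1}(\Gamma_\infty)=0$, in which case both bounds above are simply $0$ and the conclusion still holds; and (ii) one should note that $\Mod_{p,\sigma}(\Gamma_\infty)\le\inf_n\sigma(n)|S_n|<\infty$ by Lemma~\ref{lemma:p2bounded}, so that the $1/p$-th powers are legitimately defined. Everything else is routine limit bookkeeping.
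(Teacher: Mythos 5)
Your proposal is correct and follows essentially the same route as the paper, which states the theorem as an immediate consequence of Lemma~\ref{lem:elliptic-estimate} and Corollary~\ref{cor:infity_modulus}: you simply spell out the squeeze $\alpha_1^{1/p}\Mod_{p,1}(\Gamma_\infty)^{1/p}\le\Mod_{p,\sigma}(\Gamma_\infty)^{1/p}\le\alpha_2^{1/p}\Mod_{p,1}(\Gamma_\infty)^{1/p}$ and the observation that $\ell_{\sigma^{-1}}(\Gamma_\infty)=+\infty$ when $\sigma\le\alpha_2$, so Theorem~\ref{thm:mod_infity} gives $\Mod_{\infty,\sigma}(\Gamma_\infty)=0$. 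The added care about the degenerate case $\Mod_{p,1}(\Gamma_\infty)=0$ and the finiteness from Lemma~\ref{lemma:p2bounded} is fine but not a departure from the paper's argument.
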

The $p$-modulus as a function of weights is Lipschitz if weights are bounded elliptic weights in the radially symmetric tree. For this let $\Sigma = \left\{\sigma\in\mathbb{R}^E_{>0}:\alpha_1\le\sigma\le\alpha_2\right\}$ where $\sigma_1>0$ and $\sigma_2>0$. Define a function $F:\sigma\to\mathbb{R}$ by $F(\sigma)=\Mod_{p,\sigma}(\Gamma)$. Then we have following theorem. 
\begin{theorem}
Suppose $\Mod_{p,1}(\Gamma_\infty)>0$ for some $1<p<\infty$. Then the function $F(\sigma)=\Mod_{p,\sigma}(\Gamma)$ is Lipschitz continuous on $\Sigma$ with respect to the $\infty$-norn.
\end{theorem}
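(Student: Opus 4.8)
The plan is to avoid differentiating the series formula and instead run a short ``envelope'' (Danskin-type) argument on the variational definition of the modulus, using the uniform bounds supplied by Lemma~\ref{lem:elliptic-estimate}. Throughout, I understand $\Gamma$ to be $\Gamma_\infty$, so that $F(\sigma)=\Mod_{p,\sigma}(\Gamma_\infty)$. Fix $\sigma,\tau\in\Sigma$. Since $\Mod_{p,1}(\Gamma_\infty)>0$, Corollary~\ref{cor:sigma-unit-iff} gives $\Mod_{p,\sigma}(\Gamma_\infty)>0$, hence by Theorem~\ref{thm:mod-opt-den} there is an optimal density $\rho^*_\sigma\in\Adm(\Gamma_\infty)$ with $\mathcal E_{p,\sigma}(\rho^*_\sigma)=F(\sigma)$. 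The first step is to note that $\rho^*_\sigma$ is a feasible competitor for $\Mod_{p,\tau}(\Gamma_\infty)$, so
\begin{equation*}
F(\tau)\le \mathcal E_{p,\tau}(\rho^*_\sigma)=\sum_{e\in E}\tau(e)\rho^*_\sigma(e)^p
= F(\sigma)+\sum_{e\in E}\bigl(\tau(e)-\sigma(e)\bigr)\rho^*_\sigma(e)^p
\le F(\sigma)+\|\tau-\sigma\|_\infty\sum_{e\in E}\rho^*_\sigma(e)^p ,
\end{equation*}
where the rearrangement is legitimate because all sums involved are finite (each is dominated by $\alpha_2\alpha_1^{-1}F(\sigma)$).

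The second step is to bound $\sum_{e}\rho^*_\sigma(e)^p$ by a constant independent of $\sigma$. Since $\sigma\ge\alpha_1$ pointwise,
\begin{equation*}
\sum_{e\in E}\rho^*_\sigma(e)^p\le\frac1{\alpha_1}\sum_{e\in E}\sigma(e)\rho^*_\sigma(e)^p=\frac{F(\sigma)}{\alpha_1},
\end{equation*}
and the upper estimate in Lemma~\ref{lem:elliptic-estimate} gives $F(\sigma)\le\alpha_2\Mod_{p,1}(\Gamma_\infty)$. Hence $\sum_e\rho^*_\sigma(e)^p\le L$ with $L:=(\alpha_2/\alpha_1)\Mod_{p,1}(\Gamma_\infty)$, a constant depending only on $p$, $\alpha_1$, $\alpha_2$ and the tree. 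Combining the two steps yields $F(\tau)-F(\sigma)\le L\|\tau-\sigma\|_\infty$.

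Finally, the estimate is symmetric in its two arguments: repeating the argument with the roles of $\sigma$ and $\tau$ interchanged, using the optimal density $\rho^*_\tau$, gives $F(\sigma)-F(\tau)\le L\|\sigma-\tau\|_\infty$, so $|F(\sigma)-F(\tau)|\le L\|\sigma-\tau\|_\infty$ for all $\sigma,\tau\in\Sigma$, which is the claim. The only delicate point---the one I would flag as the ``main obstacle''---is Step~2: one needs both that an honest minimizer exists for every $\sigma\in\Sigma$ (so the envelope inequality uses a true extremal rather than an $\epsilon$-optimal density) and that its $p$-energy is uniformly bounded above; both are precisely what the positivity hypothesis, Theorem~\ref{thm:mod-opt-den}, and Lemma~\ref{lem:elliptic-estimate} provide. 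If one wishes to sidestep the existence of the extremal, the identical bound can be obtained by choosing, for each $\sigma$, an admissible $\rho$ with $\mathcal E_{p,\sigma}(\rho)\le F(\sigma)+\epsilon$ and letting $\epsilon\downarrow 0$; the variational route is still preferable to attacking the tail of the series in~\eqref{eq:mod-formula-inf-family} directly.
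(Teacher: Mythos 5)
Your proposal is correct and is essentially the paper's own argument: evaluate the $\tau$-energy of the $\sigma$-optimal density (which exists by Corollary~\ref{cor:sigma-unit-iff} and Theorem~\ref{thm:mod-opt-den}), bound the perturbation term by $\alpha_1^{-1}\|\tau-\sigma\|_\infty\,\mathcal{E}_{p,\sigma}(\rho^*_\sigma)$, invoke Lemma~\ref{lem:elliptic-estimate} to get the uniform constant $\tfrac{\alpha_2}{\alpha_1}\Mod_{p,1}(\Gamma_\infty)$, and symmetrize. The only cosmetic difference is that the paper works with partial sums over shells and passes to the limit in $N$, while you sum over edges directly and justify the rearrangement by absolute convergence; the constant and the structure of the estimate are identical.
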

\begin{proof}
By Corollary~\ref{cor:sigma-unit-iff} and Theorem~\ref{thm:mod-opt-den} there exist optimal $\rho^*$ and $\hat{\rho}^*$ for $\Mod_{p, \sigma}(\Gamma_\infty)$ and $\Mod_{p, \hat{\sigma}}(\Gamma_\infty)$ respectively. Note that, for any $N\ge 1$,

\begin{align*}
\sum_{k=1}^N \hat{\sigma}_k|S_k|(\rho^*_k)^p
& = \sum_{k=1}^N \sigma_k|S_k|(\rho^*_k)^p
+ \sum_{k=1}^N (\hat{\sigma}_k-\sigma_k)|S_k|(\rho^*_k)^p\\
&\le  \sum_{k=1}^N \sigma_k|S_k|(\rho^*_k)^p
+ \sum_{k=1}^N \frac{|\hat{\sigma}_k-\sigma_k|}{\sigma_k}\sigma_k|S_k|(\rho^*_k)^p\\
& \le  \sum_{k=1}^\infty \sigma_k|S_k|(\rho^*_k)^p + 
\frac{1}{\alpha_1}\sup_k |\hat{\sigma}_k-\sigma_k|\sum_{k=1}^\infty\sigma_k|S_k|(\rho^*_k)^p
\\
&\le \Mod_{p,\sigma}(\Gamma_\infty) +
\frac{1}{\alpha_1}
\Mod_{p,\sigma}(\Gamma_\infty)
\|\hat{\sigma}-\sigma\|_\infty
\\
&\le \Mod_{p,\sigma}(\Gamma_\infty) +
\frac{\alpha_2}{\alpha_1}
\Mod_{p,1}(\Gamma_\infty)
\|\hat{\sigma}-\sigma\|_\infty.
\end{align*}
Taking the limit as $N\to\infty$ shows that
\begin{equation*}
     \Mod_{p, \hat{\sigma}}(\Gamma_\infty)-  \Mod_{p, \sigma}(\Gamma_\infty) \le \frac{\alpha_2}{\alpha_1}\Mod_{p,1}(\Gamma_\infty)\|\hat{\sigma}-\sigma\|_\infty.
\end{equation*}
Repeating the same argument with $\sigma$ and $\hat{\sigma}$ interchanged establishes the theorem.
\end{proof}

\section{Critical value of $p$ on radially symmetric infinite binary tree}\label{Chapter:critical_exponent}

In this section, we focus on $1-2$ radially symmetric trees and investigate how the modulus and size of trees are related. We aim to determine whether a positive or zero modulus indicates a bushy or skinny tree, respectively. In addition, we explore the existence of a critical exponent for these trees. Consider $\sigma \equiv 1$, the formula for $p$-modulus of the family of descending paths given in~\eqref{eq:mod-formula-inf-family} can be expressed as
\begin{equation}\label{eq:mod-formula-unweighted}
  \Mod_{p,1}(\Gamma_{\infty})= \left(\sum_{k=1}^\infty|S_k|^{-\frac{q}{p}}\right)^{-\frac{p}{q}}.
\end{equation}
In particular, when $C(e) \equiv 1$, the $\Mod_{p,1}(\Gamma_{\infty})=0$ for any $p \in (1, \infty)$ and when $C(e) \equiv 2$, the

\begin{equation*}
    \Mod_{p,1}(\Gamma_\infty) =
    \left(\sum_{k=1}^\infty(2^k)^{-\frac{q}{p}}\right)^{-\frac{p}{q}}
    = 2\left(1-2^{-\frac{q}{p}}\right)^{\frac{p}{q}},
\end{equation*}
which is positive for any $p\in(1,\infty)$. This shows that $p$-modulus is always positive if each shell has at least two children and zero if each shell has one child or the tree is sparse. For any $1$-$2$ radially symmetric trees, the $p$-modulus is either positive or zero depending on the parameter $p$. From the Lemma~\ref{lem:cut-inequality}, the $1$-modulus of the family of descending paths in $1$-$2$ radially symmetric tree is positive which is
\begin{equation}\label{eq:1-modulus}
    \Mod_{1,1}(\Gamma_\infty) =|S_1|.
\end{equation}
 From the Corollary~\ref{eq:infinitymodulus_as_limit}, the $p$-modulus decays to zero as $p$ approaches to infinity, that is, 
 \begin{equation}\label{eq:infity_modulus}
    \lim_{p \to \infty}\Mod_{p,1}(\Gamma_\infty)^{\frac{1}{p}}=0.
\end{equation}
Finally, the monotonicity property of $p$-modulus for a $1$-$2$ radially symmetric trees from the Lemma~\ref{lemma:symmetric_pmono} is 
\begin{equation}\label{eq:monotone}
    \Mod_{p', 1}(\Gamma_\infty)\le \Mod_{p, 1}(\Gamma_\infty), \qquad p \le p'
\end{equation}
The equations~\eqref{eq:1-modulus}, \eqref{eq:infity_modulus}, and \eqref{eq:monotone} imply that there may be a value for the parameter $p$ such that $p$-modulus becomes zero for all larger values of $p$. Therefore, we define the critical parameter or critical exponent for the $p$-modulus in a $1$-$2$ radially symmetric tree as 
\begin{equation*}
    p_c=\sup\{p >1 : \Mod_{p,1}(\Gamma_{\infty})>0\}
\end{equation*}
with the understanding that $p_c$ may be either $1$ (if $\Mod_{p,1}(\Gamma_\infty)=0$ for all $p>1$) or $+\infty$ (if $\Mod_{p,1}(\Gamma_\infty)>0$ for all $p>1$).
The critical exponent measures of how ``bushy'' or ``skinny'' the tree is. At one extreme we have the $1$-ary tree for which $p_c=1$.  On the other hand, we have the binary tree for which $p_c=+\infty$. More generally, all radially symmetric trees for which each vertex has at least two children have $p_c= + \infty$. In this sense, the critical exponent can be considered as a measure of dimension for very sparse infinite trees. 

A radially symmetric binary tree can be represented by a sequence $\{a_i: a \in \{1,2\}, i= 1, 2, 3, \cdots\}$ where $a_i$ represents the children atthe generation $i$. Let $K=\{k_0, k_1, k_2, \cdots\}$ be the sequence of indices where $a_i=2$. Define a \emph{skip sequence} of indices $c_i=k_i-k_{i-1}$ for $i=1, 2, 3, \cdots$, then $p$-modulus in \eqref{eq:mod-formula-unweighted} takes the form
\begin{equation}\label{mod:1-2}
    \Mod_{p,1}(\Gamma_{\infty})=\left[\sum_{k=1}^{\infty}\frac{c_k}{2^{\frac{k}{p-1}}}\right]^{1-p} \qquad \text{for} \quad k_0=1
\end{equation} 
\begin{equation}\label{mod:firstone tree}
    \Mod_{p,1}(\Gamma_{\infty})=\left[m+\sum_{k=1}^{\infty}\frac{c_k}{2^{\frac{k}{p-1}}}\right]^{1-p} \qquad \text{for} \quad k_0=m>1
\end{equation}
The terms $c_i$ of skip sequence are the gap to generations which has two children in a $1$-$2$ radially symmetric tree. When $c_i=1$ for all $i$, tree becomes a binary tree and in this case, the $p$-modulus is finite since the series in the right side of~\eqref{mod:1-2} converges for all $p >1$. When $c_i=\infty$, the tree becomes a $1$-ray tree and in this case $p$-modulus is zero by ~\eqref{mod:1-2} for all $p >1$. This shows that $p$-modulus of family of descending paths in $1$-$2$ tree either zero or finite depending on the parameter $p$ and how the number of children grows in each generation. The Theorem~\ref{theorem:existence_critical} ensures a radially symmetric binary tree trees can have nontrivial critical $p_c$ (that is, $1<p_c<\infty$).
\begin{proof}[Proof of Theorem~\ref{theorem:existence_critical}]
For $k=1,2,\ldots$, define
\begin{equation*}
c_k = \left\lceil2^\frac{k}{r-1}\right\rceil.
\end{equation*}
Since each $c_k\ge 1$, the sequence $\{c_k\}$ is a skip sequence for some 1-2 tree.  Moreover,
\begin{equation*}
\left(2^{\frac{1}{r-1}-\frac{1}{p-1}}\right)^k \le
\frac{c_k}{2^{\frac{k}{p-1}}} \le
\left(2^{\frac{1}{r-1}-\frac{1}{p-1}}\right)^k
 + \left(2^{-\frac{1}{p-1}}\right)^k
\qquad\text{for all $k$}.
\end{equation*}
Comparison with the geometric series shows that the infinite series converges if and only if $1<p<r$.
\end{proof}

The \emph{$2$-modulus} of the family of descending paths in an infinite tree is effective conductance and related to transience or recurrence of a random walk (see Corollary~\ref{corollary:transient}). In particular, a random walk is recurrent (or transient) in relatively \emph{skinny} (or \emph{bushy}) unweighted radially symmetric trees. The critical exponent $p_c$ tells us about a random walk is transient or recurrent in $1$-$2$ in tree. 

\begin{theorem}\label{theorem:random_1-2tree}
Let $p_c$ be the critical exponent for $\Mod_{p,1}(\Gamma_{\infty})$. Then
\begin{enumerate}
    \item If $p_c >2$ then the random walk is transient. 
    \item If $p_c < 2$ then the random walk is recurrent.
\end{enumerate}
\end{theorem}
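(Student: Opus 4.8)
The plan is to connect the critical exponent $p_c$ directly to the convergence behavior of the series defining $\Mod_{2,1}(\Gamma_\infty)$, and then invoke Corollary~\ref{corollary:transient}, which states that a random walk on an infinite tree is transient if and only if $\Mod_{2,1}(\Gamma_\infty)>0$. The key observation is that the monotonicity property~\eqref{eq:monotone} forces $\Mod_{p,1}(\Gamma_\infty)$ to be positive for $p<p_c$ and zero for $p>p_c$; the only ambiguous point is $p=p_c$ itself, but the two cases $p_c>2$ and $p_c<2$ in the statement steer clear of that boundary.

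\begin{proof}[Proof of Theorem~\ref{theorem:random_1-2tree}]
Suppose first that $p_c>2$. By the definition of $p_c$ as a supremum, together with the monotonicity in Lemma~\ref{lemma:symmetric_pmono} (equivalently~\eqref{eq:monotone}), every $p$ with $1<p<p_c$ satisfies $\Mod_{p,1}(\Gamma_\infty)>0$. Since $2\in(1,p_c)$, we conclude $\Mod_{2,1}(\Gamma_\infty)>0$. By Corollary~\ref{corollary:transient}, the random walk is transient.

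Now suppose $p_c<2$. For any $p$ with $p_c<p<\infty$, the definition of $p_c$ gives $\Mod_{p,1}(\Gamma_\infty)=0$; indeed if some $p>p_c$ had positive modulus then by monotonicity all smaller exponents down to just above $p_c$ would also have positive modulus, contradicting that $p_c$ is the supremum of the set where the modulus is positive. In particular, taking $p=2>p_c$, we obtain $\Mod_{2,1}(\Gamma_\infty)=0$. Again by Corollary~\ref{corollary:transient}, the random walk fails to be transient, hence it is recurrent.
\end{proof}

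The argument is essentially a bookkeeping exercise once Corollary~\ref{corollary:transient} and the monotonicity of $p\mapsto\Mod_{p,1}(\Gamma_\infty)$ are in hand; the only subtlety I would flag is making precise that $p_c>2$ really does put $2$ strictly inside the interval of positivity (so that we may apply the strict inequality in the definition of the supremum) and, symmetrically, that $p_c<2$ puts $2$ strictly in the region where the series~\eqref{eq:mod-formula-unweighted} diverges and the modulus vanishes. Neither case touches the borderline $p=p_c$, where the behavior of the modulus is not determined by $p_c$ alone, so no further analysis of that edge case is needed.
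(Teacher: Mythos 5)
Your proof is correct and uses exactly the same ingredients as the paper: Corollary~\ref{corollary:transient} (transience $\Leftrightarrow \Mod_{2,1}(\Gamma_\infty)>0$), the monotonicity/series comparison behind Lemma~\ref{lemma:symmetric_pmono}, and the definition of $p_c$ as a supremum; the paper merely phrases the same argument in contrapositive form (recurrent $\Rightarrow \Mod_{2,1}=0 \Rightarrow p_c\le 2$, transient $\Rightarrow \Mod_{2,1}>0 \Rightarrow p_c\ge 2$). No gap.
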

\begin{proof}
For 1), suppose the random walk is recurrent, then by the Corollary~\ref{corollary:transient},  $\Mod_{2,1}(\Gamma_\infty)=0$, so $p_c\le 2$.  For 2), suppose the random walk is transient, then by the Corollary~\ref{corollary:transient} $\Mod_{2,1}(\Gamma_\infty)>0$, so $p_c\ge 2$.
\end{proof}

Note that the case $p_c=2$ cannot be decided in this way since there is currently no general theory to determine whether $\Mod_{p_c,1}(\Gamma_\infty)$ is positive or zero. As an example, take the skip sequence $c_k=2^k$ for all $k$, then by the Theorem~\ref{theorem:existence_critical} the critical exponent is $p_c=2$ and the $\Mod_{2,1}(\Gamma_\infty)=0$. On other hand pick a skip sequence as 
\begin{equation*}
c_k=\left\lceil\frac{2^k}{k^2}\right\rceil\ge 1\quad\text{for }k=1,2,3,\ldots.
\end{equation*}
Then 
\begin{equation*}
    \sum_{k=1}^\infty \frac{c_k}{2^{\frac{k}{p-1}}}=\sum_{k=1}^\infty\frac{\left\lceil\frac{2^k}{k^2}\right\rceil}{2^{\frac{k}{p-1}}} \ge \sum_{k=1}^\infty\frac{\frac{2^k}{k^2}}{2^{\frac{k}{p-1}}} =\sum_{k=1}^\infty\frac{2^{\left(\frac{p-2}{p-1}\right)k}}{k^2}
\end{equation*}
By the ratio test, the series in the right side diverges if $p>2$
\begin{equation*}
   \lim_{k \to \infty} \frac{2^{\left(\frac{p-2}{p-1}\right)(k+1)}}{(k+1)^2} \cdot \frac{k^2}{2^{\left(\frac{p-2}{p-1}\right)k}}=2^{\frac{p-2}{p-1}}
\end{equation*}
This shows that $\Mod_{p,1}(\Gamma_\infty)=0$ for $p>2$.
Therefore, the critical exponent satisfies $p_c\le 2$ but at $p=2$, we have
\begin{equation*}
    \Mod_{2,1}(\Gamma_\infty)^{-1}=\sum_{k=1}^\infty \frac{c_k}{2^k} =\sum_{k=1}^\infty\frac{\left\lceil\frac{2^k}{k^2}\right\rceil}{2^k} \le  \sum_{k=1}^\infty\frac{\frac{2^{k}}{k^2}+1}{2^k}=\sum_{k=1}^\infty\frac{1}{k^2} + \sum_{k=1}^\infty\frac{1}{2^k}.
\end{equation*}
Both series on the right converge absolutely so by comparison test, we have  $\Mod_{2,1}(\Gamma_\infty)>0$ and, therefore $p_c=2$.

Next, in a weighted infinite rooted $1$-$2$ radially symmetric tree the positive (or zero) \emph{$p$-modulus} may not imply a tree is dense (or thin) in the sense of the number of children. Consider the path tree that is, $S_k=1$ for all $k$, then the formula for \emph{$p$-modulus} is expressed in 
\begin{equation}\label{for:1-2modweighted}
    \Mod_{p,\sigma}(\Gamma_\infty)=\left(\sum_{k=1}^\infty\sigma_k^{-\frac{q}{p}}\right)^{-\frac{p}{q}}
\end{equation}

In this case, given $p>1$, the weight $\sigma$ determines whether an infinite tree has the positive or zero modulus. For an example, let $\sigma_k = 2^k$, then the series on the right hand side of~\eqref{for:1-2modweighted} for $1<p < 2$ converges and diverges for any $p\ge 2$. Hence $\Mod_{p,\sigma}(\Gamma_\infty)>0$ is positive for $1<p < 2$ and $\Mod_{p,\sigma}(\Gamma_\infty)>0$ is zero for $p \ge 2$. Consider another example of a dense infinite tree, $|S_k|=2^k$ for $k=1, 2, \cdots$ then the \emph{$p$-modulus} is given by 
\begin{equation}\label{for:1-2modweighted2}
    \Mod_{p,\sigma}(\Gamma_\infty)=\left(\sum_{k=1}^\infty(\sigma_k2^k)^{-\frac{q}{p}}\right)^{-\frac{p}{q}}
\end{equation}
If $\sigma_k=2^{-k}$ for $k= 1, 2, \cdots$, then  the \emph{$p$-modulus} zero for any $ p >1$. This shows that \emph{$p$-modulus} on a dense $1$-$2$ infinite tree has modulus zero as the weight decay proportional to the number of children of each shell. Hence, we define the critical exponent for weighted $1$-$2$ radially symmetric tree.

\begin{equation*}
    p_c^\sigma=\sup\{p>1: \Mod_{p, \sigma}(\Gamma_\infty) > 0\}
\end{equation*}

In the case of $\sigma \equiv 1$, we have the critical exponent $p_c$ for the unweighted $1$-$2$ radially symmetric tree. The following theorem characterize the nature of a random walk in the weighted  $1$-$2$ radially symmetric tree. The proof is similar arguments of the Theorem~\ref{theorem:random_1-2tree}.
\begin{theorem}\label{theorem:random_1-2treeweighted}
Let $p_c$ be the critical exponent for $\Mod_{p,\sigma}(\Gamma_{\infty})$. Then
\begin{enumerate}
    \item If $p^\sigma_c >2$ then the random walk is transient. 
    \item If $p^\sigma_c  < 2$ then the random walk is recurrent.
\end{enumerate}
\end{theorem}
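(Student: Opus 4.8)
The plan is to mirror the proof of Theorem~\ref{theorem:random_1-2tree}, the only genuinely new ingredient being a weighted analogue of Corollary~\ref{corollary:transient}. So the first step would be to record that, for the network on the $1$-$2$ tree with edge conductances given by $\sigma$, the effective conductance from the root to infinity equals $\Mod_{2,\sigma}(\Gamma_\infty)$: indeed $\Mod_{2,\sigma}(\Gamma_n)$ is the effective conductance of the truncated tree (as recalled in Section~\ref{Introduction}, following \cite{AlbinModulusGraphs}), and Theorem~\ref{thm:mod-limit} identifies $\lim_n \Mod_{2,\sigma}(\Gamma_n)$ with $\Mod_{2,\sigma}(\Gamma_\infty)$. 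Combining this with the classical transience criterion for reversible walks --- the $\sigma$-random walk is transient if and only if the effective conductance to infinity is strictly positive (\cite{Doyle2000RandomNetworks, LyonsProbabilityNetworks}) --- yields the statement that the $\sigma$-weighted random walk on the $1$-$2$ tree is transient if and only if $\Mod_{2,\sigma}(\Gamma_\infty)>0$.

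The second step is to pin down the structure in $p$ of the positivity set of $\Mod_{p,\sigma}(\Gamma_\infty)$. By \eqref{for:1-2modweighted} (equivalently \eqref{eq:mod-formula-inf-family}, with $q/p = 1/(p-1)$ and $p/q = p-1$), $\Mod_{p,\sigma}(\Gamma_\infty)>0$ precisely when $\sum_k (\sigma_k|S_k|)^{-1/(p-1)}<\infty$. If this series converges at some exponent $p'$, then $\sigma_k|S_k|\to\infty$, hence $\sigma_k|S_k|\ge 1$ for all large $k$, and on those indices $(\sigma_k|S_k|)^{-1/(p-1)}\le(\sigma_k|S_k|)^{-1/(p'-1)}$ for every $p<p'$; so convergence at $p'$ forces convergence at every smaller $p$ (in the same spirit as the monotonicity of Lemma~\ref{lemma:symmetric_pmono}). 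Therefore $\{p>1:\Mod_{p,\sigma}(\Gamma_\infty)>0\}$ is an interval with left endpoint $1$ and right endpoint $p_c^\sigma$; in particular $\Mod_{p,\sigma}(\Gamma_\infty)>0$ whenever $p<p_c^\sigma$, while $\Mod_{p,\sigma}(\Gamma_\infty)=0$ whenever $p>p_c^\sigma$ (the latter being immediate from the definition of $p_c^\sigma$ as a supremum).

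The conclusion then runs exactly as in Theorem~\ref{theorem:random_1-2tree}: if $p_c^\sigma>2$ then $\Mod_{2,\sigma}(\Gamma_\infty)>0$, so the walk is transient; if $p_c^\sigma<2$ then $\Mod_{2,\sigma}(\Gamma_\infty)=0$, so the walk is recurrent (equivalently, in contrapositive form, a recurrent walk gives $\Mod_{2,\sigma}(\Gamma_\infty)=0$ hence $p_c^\sigma\le 2$, and a transient walk gives $\Mod_{2,\sigma}(\Gamma_\infty)>0$ hence $p_c^\sigma\ge 2$). I expect the only non-routine point to be the first step --- verifying that the random-walk/effective-conductance dictionary used for $\sigma\equiv 1$ in Corollary~\ref{corollary:transient} carries over verbatim to arbitrary positive edge weights --- since the remainder is bookkeeping with the explicit series. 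A secondary point worth stating explicitly is that ``the random walk'' here means the $\sigma$-weighted reversible walk on the network, not the simple random walk; otherwise transience would be governed by $\Mod_{2,1}(\Gamma_\infty)$ rather than $\Mod_{2,\sigma}(\Gamma_\infty)$ and the theorem would fail.
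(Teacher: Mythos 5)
Your proposal is correct and follows essentially the same route as the paper, which simply declares the proof to be ``similar arguments'' to Theorem~\ref{theorem:random_1-2tree}: the weighted transience criterion ($\sigma$-walk transient iff $\Mod_{2,\sigma}(\Gamma_\infty)>0$, via effective conductance of truncations and Theorem~\ref{thm:mod-limit}) together with the definition of $p_c^\sigma$ and monotonicity in $p$. Your write-up is in fact more careful than the paper's, since you make explicit the weighted version of Corollary~\ref{corollary:transient}, the interval structure of the positivity set needed to pass from $p_c^\sigma$ to the value at $p=2$, and the fact that ``the random walk'' must mean the $\sigma$-weighted reversible walk.
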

In the case, the weight is bounded and uniformly elliptic, then it is more easier to get critical exponent of $1$-$2$ radially symmetric infinite tree. Recall the relation~\eqref{eq:equiv_mod} where the weights are bounded and uniformly elliptic the \emph{$p$-moduli} $\Mod_{p,\sigma}(\Gamma_\infty)$ and $\Mod_{p,1}(\Gamma_\infty)$ are equivalent. That is, 
\begin{equation*}
    \alpha_1\Mod_{p,1}(\Gamma_\infty) \le \Mod_{p,\sigma}(\Gamma_\infty) \le \alpha_2\Mod_{p,1}(\Gamma_\infty)
\end{equation*}

\begin{theorem}\label{the:equalcriticalvalue}
For $0<p<\infty$ and $\sigma$ be radially symmetric, bounded, and uniformly elliptic. Then $$p_c^\sigma=p_c$$.
\end{theorem}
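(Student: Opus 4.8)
The plan is to reduce the statement entirely to the two-sided estimate of Lemma~\ref{lem:elliptic-estimate}, which already does all the real work. Since $\sigma$ is radially symmetric, bounded, and uniformly elliptic, there are constants $0<\alpha_1\le\alpha_2$ with $\alpha_1\le\sigma\le\alpha_2$, and Lemma~\ref{lem:elliptic-estimate} gives, for every $p\in(1,\infty)$,
\[
\alpha_1\Mod_{p,1}(\Gamma_\infty)\le\Mod_{p,\sigma}(\Gamma_\infty)\le\alpha_2\Mod_{p,1}(\Gamma_\infty).
\]
Because $\alpha_1>0$ and $\alpha_2>0$, the left inequality yields $\Mod_{p,1}(\Gamma_\infty)>0\implies\Mod_{p,\sigma}(\Gamma_\infty)>0$, and the right inequality gives the converse; this is exactly the content of Corollary~\ref{cor:sigma-unit-iff}. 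Consequently the two index sets
\[
A:=\{p>1:\Mod_{p,1}(\Gamma_\infty)>0\},\qquad A^\sigma:=\{p>1:\Mod_{p,\sigma}(\Gamma_\infty)>0\}
\]
are literally the same subset of $(1,\infty)$.

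The second and final step is to take suprema: by definition $p_c=\sup A$ and $p_c^\sigma=\sup A^\sigma$, so $A=A^\sigma$ forces $p_c=p_c^\sigma$. This simultaneously disposes of the degenerate cases specified in the definition of the critical exponent — if $A=A^\sigma=\emptyset$ then both critical exponents equal $1$ by convention, and if $A=A^\sigma=(1,\infty)$ then both equal $+\infty$ — so no separate discussion of the endpoints is required.

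I do not anticipate any genuine obstacle here; the one point deserving care is that Lemma~\ref{lem:elliptic-estimate} (and hence the positivity equivalence) must be invoked for the entire range $p\in(1,\infty)$ rather than a single exponent, and that it is precisely the strict positivity of the ellipticity constants $\alpha_1,\alpha_2$ that upgrades the two-sided bound to an ``if and only if'' for positivity of the modulus. One may optionally remark, using the monotonicity of $p\mapsto\Mod_{p,\sigma}(\Gamma_\infty)$ from Lemma~\ref{lemma:symmetric_pmono}, that each of $A$ and $A^\sigma$ is an interval of the form $(1,p_c)$ or $(1,p_c]$, but this structural observation is not needed for the equality $p_c=p_c^\sigma$ itself.
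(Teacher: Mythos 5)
Your proposal is correct and follows essentially the same route as the paper: both rest entirely on the two-sided estimate of Lemma~\ref{lem:elliptic-estimate} (equivalently Corollary~\ref{cor:sigma-unit-iff}), which makes the positivity sets for $\Mod_{p,1}(\Gamma_\infty)$ and $\Mod_{p,\sigma}(\Gamma_\infty)$ coincide, so their suprema agree. Your explicit handling of the degenerate cases ($p_c=1$ or $p_c=+\infty$) is a slightly cleaner write-up of what the paper does, but it is not a different argument.
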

\begin{proof}
If $\Mod_{p,1}(\Gamma_\infty)=0$ then $\Mod_{p,\sigma}(\Gamma_\infty)=0$. Hence $p_c^\sigma= p_c=1$.  Let $\Mod_{p,1}(\Gamma_\infty)>0$, then  by first inequality we have $p_c\le p_c^\sigma$ and by the second inequality, we have, $p_c^\sigma\le p_c$.
\end{proof}
The following corollary is a direct consequence of the Theorem~\ref{the:equalcriticalvalue} and it relates the  critical value of parameter $p$ with  a random walk in unweighted and weighted infinite tree. 
\begin{corollary}
Let $\sigma$ be a bounded and uniformly elliptic weight and let $G=(G,V
,\sigma,o)$ be a $1$-$2$ radially symmetric tree. Then a random walk is transient (recurrent) if and only if $p^{\sigma}_c>2$ ($p^{\sigma}_c<2$).
\end{corollary}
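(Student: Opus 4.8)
The plan is to reduce the weighted assertion to the unweighted one and then quote the two facts already established: the characterisation of transience through the $2$-modulus (Corollary~\ref{corollary:transient}) and the monotonicity of $p\mapsto\Mod_{p,1}(\Gamma_\infty)$ (Lemma~\ref{lemma:symmetric_pmono}). The reduction is immediate from Theorem~\ref{the:equalcriticalvalue}: because $\sigma$ is radially symmetric, bounded and uniformly elliptic on the $1$-$2$ tree $G$, we have $p_c^\sigma=p_c$, so any inequality involving $p_c^\sigma$ is the same inequality involving the critical exponent $p_c$ of the underlying unweighted tree, and it suffices to work with $\sigma\equiv 1$.

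First I would record that monotonicity makes the positivity set $\{p>1:\Mod_{p,1}(\Gamma_\infty)>0\}$ an interval whose right endpoint is $p_c$; in particular $\Mod_{2,1}(\Gamma_\infty)>0$ when $p_c>2$ and $\Mod_{2,1}(\Gamma_\infty)=0$ when $p_c<2$. Combined with Corollary~\ref{corollary:transient} this yields the ``if'' directions at once: $p_c^\sigma=p_c>2$ forces $\Mod_{2,1}(\Gamma_\infty)>0$, hence transience, and $p_c^\sigma=p_c<2$ forces $\Mod_{2,1}(\Gamma_\infty)=0$, hence recurrence. For the ``only if'' directions I would argue by contrapositive exactly as in the proof of Theorem~\ref{theorem:random_1-2tree}: if the walk is transient then $\Mod_{2,1}(\Gamma_\infty)>0$, so $2$ lies in the positivity interval and $p_c^\sigma=p_c\ge 2$; if the walk is recurrent then $\Mod_{2,1}(\Gamma_\infty)=0$, so $p_c^\sigma=p_c\le 2$.

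The delicate point, and the reason the statement is phrased with strict inequalities, is the boundary value $p_c^\sigma=2$. As the examples following Theorem~\ref{theorem:random_1-2tree} show (the skip sequences $c_k=2^k$ and $c_k=\lceil 2^k/k^2\rceil$ both have critical exponent $2$ but opposite behaviour of $\Mod_{2,1}(\Gamma_\infty)$), transience is not determined by $p_c^\sigma$ alone at that value. So the content I would actually write down is the pair of implications ``$p_c^\sigma>2\Rightarrow$ transient'' and ``$p_c^\sigma<2\Rightarrow$ recurrent'' together with their contrapositives ``transient $\Rightarrow p_c^\sigma\ge 2$'' and ``recurrent $\Rightarrow p_c^\sigma\le 2$'', the genuine equivalence holding off the single value $p_c^\sigma=2$. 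Apart from this bookkeeping there is no analytic obstacle: everything is a direct consequence of Theorem~\ref{the:equalcriticalvalue}, Corollary~\ref{corollary:transient} and Lemma~\ref{lemma:symmetric_pmono}.
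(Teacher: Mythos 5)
Your argument matches the paper's: the corollary is meant as a direct consequence of Theorem~\ref{the:equalcriticalvalue} (giving $p_c^\sigma=p_c$) combined with the content of Theorem~\ref{theorem:random_1-2tree}, i.e.\ Corollary~\ref{corollary:transient} plus the monotonicity of $p\mapsto\Mod_{p,1}(\Gamma_\infty)$, which is exactly the reduction you carry out. Your caveat about the boundary value $p_c^\sigma=2$ is well taken---the paper's own examples following Theorem~\ref{theorem:random_1-2tree} show the literal ``if and only if'' cannot hold there, so the defensible statement is the pair of implications together with the weak-inequality contrapositives, precisely as you record.
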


\section{Conclusion}
In this paper, we have extended the concept of $p$-modulus on finite graphs to proper infinite graphs. The $p$-modulus for the descending paths is expressed as the limit of the $p$-modulus of truncated trees. In particular, we derived the formula~\eqref{eq:mod-formula-inf-family} for computing the p-modulus in the radially symmetric tree, which converges if and only if the modulus is positive. A special dual formulation provides a lower bound of the p-modulus. However, it remains an open question to derive a standard Lagrangian of the $p$-modulus in the appropriate infinite tree. We focus on a special class of $1$-$2$ radially symmetric infinite tree and the existence of a critical parameter $p_c$ that separates zero and positive $p$-modulus. The critical value is also related to the transience or recurrence of a random walk. Does such a critical exponent exist for a general tree modulus?  Furthermore, in general (non-symmetric) trees, it is an open question to know the existence of the optimal density and it is the limit of the optimal densities for the $p$-modulus in the truncated trees as long as the limiting density is nowhere zero. If the limit exists, does it converge point-wise or uniformly? Is the limit density unique? 

\bibliographystyle{plain}
\bibliography{references} 
\end{document}